\newcommand{\CM}{Cohen-Macaulay}
\newcommand{\m}{\mathfrak{m} }
\newcommand{\rt}{\rightarrow}
\newcommand{\sub}{\subseteq}
\newcommand{\depth}{\operatorname{depth}}
\theoremstyle{plain}
\newtheorem{theorem}{Theorem}[section]
\newtheorem{corollary}[theorem]{Corollary}
\newtheorem{lemma}[theorem]{Lemma}
\newtheorem{proposition}[theorem]{Proposition}
\theoremstyle{definition}
\newtheorem{definition}[theorem]{Definition}
\newtheorem{remark}[theorem]{Remark}
\theoremstyle{remark}
\begin{document}
	\title[Associated graded modules]{On associated graded modules of maximal Cohen-Macaulay modules over hypersurface rings }
	\author{Ankit Mishra}
	\email{ankitmishra@math.iitb.ac.in}
	
	\author{ Tony~J.~Puthenpurakal}
	\email{tputhen@math.iitb.ac.in}
	
	\address{Department of Mathematics, IIT Bombay, Powai, Mumbai 400 076}

	\date{\today}
	\subjclass{Primary 13A30; Secondary  13D40, 13C15,13H10}
	
	\keywords{maximal Cohen-Macaulay module, reduction number, Ratliff-Rush filtration, associated graded module, hypersurface ring}
	
	\begin{abstract}
		Let $(A,\mathfrak{m})$ be a hypersurface ring with dimension $d$, and $M$ a MCM $A-$module with red$(M)\leq 2$ and $\mu(M)=2$ or $3$ then we have proved that depth $G(M)\geq d-\mu(M)+1$. If $e(A)=3$ and $\mu(M)=4$ then in this case  we have  proved that depth$G(M)\geq d-3$. Next we consider the case when $e(M)=\mu(M)i(M)+1$ and prove that depth $G(M)\geq d-1$. When $A = Q/(f)$ where $Q = k[[X_1,\cdots, X_{d+1}]]$ then we give estimates for $\depth G(M)$ in terms of a minimal presentation of $M$. Our paper is the first systematic study of depth of associated graded modules of MCM modules over hypersurface rings

	\end{abstract}
	\maketitle
	\section{Introduction}
	
	Let $(A,\mathfrak{m})$ be Noetherian local ring of dimension $d$ and $M$ a finite \CM\ $A$-module of dimension $r$. Let $G(A)=\bigoplus_{n\geq0}\mathfrak{m}^n/\mathfrak{m}^{n+1}$ be associated graded ring of $A$ with respect to $\mathfrak{m}$ and $G(M)=\bigoplus_{n\geq0}{\mathfrak{m}^nM}/{\mathfrak{m}^{n+1}M}$ be associated graded module of $M$ with respect to $\mathfrak{m}$. Now $\mathcal{M}=\bigoplus_{n\geq1}\mathfrak{m}^n/\mathfrak{m}^{n+1}$ is irrelevant maximal ideal of $G(A)$ we set depth $G(M)$=grade$(\mathcal{M},G(M))$. If $L$ be an $A$-module then minimal number of generators of $L$ is denoted by  $\mu(L)$ and its length is denoted by $\ell(L)$.
	
	We know that Hilbert-Samuel function of $M$ with respect to $\mathfrak{m}$ is
	$$H^1(M,n)=\ell({M}/{\mathfrak{m}^{n+1}M})\ \text{for all}\ n\geq0.$$
	There exists a polynomial $P_M(z)$ of degree $r$  such that
	  $$H^1(M,n)=P_M(n)\ \text{for}\ n\gg0.$$
	This polynomial can be written as $$P_M(X)=\sum_{i=0}^{r}(-1)^ie_i(M)\binom{X+r-i}{r-i}$$
	These coefficients $e_i(M)'$s are integers and known as {\it Hilbert coefficients} of $M$.
	
	We know that Hilbert series of $M$ is formal power series $$H_M(z)=\sum_{n\geq0}\ell(\mathfrak{m}^nM/\mathfrak{m}^{n+1}M)z^n$$
	We can write $$ H_M(z)=\frac{h_M(z)}{(1-z)^r},\ \text{where}\ r=dimM $$
	
	Here, $h_M(z)=h_0(M)+h_1(M)z+\ldots+h_s(M)z^s\in \mathbb{Z}[z]$ and $h_M(1)\neq0$. This polynomial is know as {\it h-polynomial} of $M$.

	If we set $f^{(i)}$ to denote $i$th formal derivative of  a polynomial $f$ then it is easy to see that $e_i(M)=h_M^{(i)}(1)/i!$ for $i=0,\ldots,r$.  It is  convenient to set $e_i(M)=h_M^{(i)}(1)/i!$ for all $i\geq0.$
	
	Now we know that if $(A,\mathfrak{m})$ is \CM\ with red$(A)\leq2$ then $G(A)$ is \CM\ (see\cite[Theorem 2.1]{S}). 
	
	If $M$ is a \CM \ $A$-module with red$(M)\leq1$ then  $G(M)$ is \CM \ (see \cite[Theorem 16]{Pu0}), but if red$(M)=2$, then $G(M)$ need not be \CM\ (see \cite[Example 3.3]{PuMCM}).
	
	Here  we consider maximal \CM\ (MCM) modules over a \CM\ local ring $(A,\mathfrak{m})$. We know that if $A$ is a regular local ring then $M$ is free, say $M\cong A^s$. This implies $G(M)\cong G(A)^s$ is \CM.
	
	The next case is when $A$ is a hypersurface ring. {\it For convenience we assume $A=Q/(f)$ where $(Q,\mathfrak{n})$ is a regular local ring with infinite residue field and $f\in \mathfrak{n}^2$.} 
	
	If $f\in \mathfrak{n}^2\setminus\mathfrak{n}^3$ then $A$ has minimal multiplicity. It follows that any MCM module $M$ over $A$ has minimal multiplicity. So $G(M)$ is \CM.
	
	One of the cases of interest for us was when  $f\in \mathfrak{n}^3\setminus\mathfrak{n}^4$. Note in this case red$(A)=2$. So if $M$ is any MCM $A$-module then red$(M)\leq2$. In this case $G(M)$ need not \CM\ (see  \cite[Example 3.3]{PuMCM}).
	
	So we study the case when red$(M)\leq 2$ for an MCM $A$-module where $f\in \mathfrak{n}^e\setminus\mathfrak{n}^{e+1}$ and $e\geq 3$.

	Notice if $M$ is an MCM module over $A$ then projdim$_Q(M)=1$. So, $M$ has a minimal presentation over $Q$ 
	$$0\rt Q^{\mu(M)}\xrightarrow{\phi}Q^{\mu(M)}\rt M\rt 0.$$
	We investigate $G(M)$ in terms of invariants of a minimal presentation of $M$ over $Q$.

	Now the first theorem which we have proved is for $\mu(M)=2$
		\begin{theorem}\label{1}
		Let $(A,\mathfrak{m})$ be a hypersurface ring  of dimension $d$ with infinite residue field and  $M$ an MCM $A$-module with $\mu(M)=2$. If
		 $red(M)\leq 2$ then depth$G(M)\geq d-1$.
	\end{theorem}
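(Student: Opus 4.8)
The plan is to exhibit a homogeneous $G(M)$-regular sequence of length $d-1$ inside the irrelevant maximal ideal $\mathcal{M}$ of $G(A)$; its existence forces $\depth G(M)\geq d-1$ immediately. Because $M$ is an MCM module over the hypersurface $A=Q/(f)$, it is the cokernel of a matrix factorization of $f$: the minimal presentation $0\rt Q^{2}\xrightarrow{\phi}Q^{2}\rt M\rt 0$ is accompanied by a map $\psi$ with $\phi\psi=\psi\phi=f\cdot I_{2}$, where $\phi=(\phi_{ij})$ has all entries in $\mathfrak{n}$ and $\det\phi=f$ up to a unit. I would keep this explicit $2\times 2$ description available throughout, since the hypothesis $\mu(M)=2$ enters the argument precisely through the size of $\phi$.

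First I would fix a minimal reduction $J=(x_{1},\ldots,x_{d})$ of $\mathfrak{m}$ arising from a sufficiently generic superficial sequence for $M$; the infinite residue field guarantees its existence, and $\red(M)\leq 2$ gives $\mathfrak{m}^{n+1}M=J\mathfrak{m}^{n}M$ for all $n\geq 2$. By the Valabrega--Valla criterion, the initial forms $x_{1}^{*},\ldots,x_{d-1}^{*}$ form a $G(M)$-regular sequence exactly when
\[
(x_{1},\ldots,x_{d-1})M\cap\mathfrak{m}^{n}M=(x_{1},\ldots,x_{d-1})\mathfrak{m}^{n-1}M\qquad\text{for all }n\geq 1.
\]
The case $n=1$ is automatic, and a standard induction using the reduction equalities $\mathfrak{m}^{n+1}M=J\mathfrak{m}^{n}M$ together with superficiality reduces all $n\geq 3$ to the single equality at $n=2$. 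Thus the theorem comes down to proving
\[
(x_{1},\ldots,x_{d-1})M\cap\mathfrak{m}^{2}M=(x_{1},\ldots,x_{d-1})\mathfrak{m}M.
\]

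The hard part will be this degree-two intersection. The inclusion $\supseteq$ is immediate; for $\subseteq$ I would take $w=\sum_{i<d}x_{i}m_{i}\in\mathfrak{m}^{2}M$ and try to correct each $m_{i}$ into $\mathfrak{m}M$ modulo the relations of $M$. This is where $\mu(M)=2$ should be decisive: writing everything in the two generators of $M$, the only relations available are the two columns of $\phi$, and with just a $2\times 2$ matrix factorization (together with $f\in\mathfrak{n}^{e}$ and $\red(M)\leq 2$) one has enough room to solve for the corrections by an explicit linear-algebra computation over $Q$. I expect that for $\mu(M)=2$ this computation closes exactly, delivering the full sequence of length $d-1$; for $\mu(M)=3,4$ the analogous correction should leave a controlled deficiency, which is the structural reason the bound in the abstract degrades to $d-\mu(M)+1$.

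Finally, once the displayed degree-two equality is in hand, the Valabrega--Valla criterion produces the $G(M)$-regular sequence $x_{1}^{*},\ldots,x_{d-1}^{*}$, whence $\depth G(M)\geq d-1$. For $d\leq 1$ the inequality is trivial, so one may assume $d\geq 2$ throughout, which is also the regime in which the generic superficial elements behave as needed.
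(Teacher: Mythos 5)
Your reduction of the statement to a Valabrega--Valla condition is sound as far as it goes: the case $n=1$ is indeed automatic, and your claim that $n\geq 3$ follows from $n=2$ is correct (given $\mathfrak{m}^{n+1}M=J\mathfrak{m}^{n}M$ for $n\geq 2$, one writes $w\in(x_{1},\ldots,x_{d-1})M\cap\mathfrak{m}^{n+1}M$ as $u+x_{d}v$ with $u\in(x_{1},\ldots,x_{d-1})\mathfrak{m}^{n}M$, uses that $x_{1},\ldots,x_{d}$ is $M$-regular to get $v\in(x_{1},\ldots,x_{d-1})M\cap\mathfrak{m}^{n}M$, and induces downward). But the proof stops exactly where the theorem begins: the degree-two equality $(x_{1},\ldots,x_{d-1})M\cap\mathfrak{m}^{2}M=(x_{1},\ldots,x_{d-1})\mathfrak{m}M$ is the entire content of the statement, and for it you offer only the expectation that an ``explicit linear-algebra computation over $Q$'' with the $2\times 2$ matrix factorization ``closes exactly.'' No such computation is carried out, and there is no evidence it can be carried out uniformly.

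The paper's proof shows why a uniform correction argument is implausible. After passing to a maximal $\phi$-superficial sequence, $M/\underline{x}M\cong Q'/(y^{a_{1}})\oplus Q'/(y^{a_{2}})$ over the DVR $Q'=Q/(\underline{x})$, and the argument splits into six cases according to $(a_{1},a_{2})$. In four of them $G(M)$ is Cohen--Macaulay for structural reasons (Ulrich, $e=\mu\cdot i$, or forced minimal multiplicity), but in the case $(a_{1},a_{2})=(1,2)$ with $h_{M_{1}}(z)=2+z^{2}$ one has $\depth G(M)=d-1$ \emph{exactly}: $G(M)$ is not Cohen--Macaulay, so the initial forms of the full sequence $x_{1}^{*},\ldots,x_{d}^{*}$ are not regular, while those of length $d-1$ are. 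Any computation that treats the generators and the relations of $\phi$ symmetrically would have no way to see this asymmetry; the paper instead proves the needed depth bound by contradiction, showing that $\depth G(M)=0$ (in dimension two) would force $\ell(\mathfrak{m}^{2}M/J\mathfrak{m}M)=2$ via the Rossi--Valla exact sequence and Singh's equality, so that $G(M)/(x_{1}^{*},x_{2}^{*})G(M)$ would have Hilbert series $2+z+2z^{2}$, which is impossible because this quotient is a cyclically generated module over a ring of the form $k[T]/(T^{s})$ and hence must have series $2+\alpha z+\beta z^{2}$ with $\beta\leq\alpha\leq 2$. That interplay of $e_{2}$ bounds, superficiality, and the Hilbert-series obstruction is what replaces your hoped-for linear algebra, and nothing in your proposal supplies an equivalent mechanism; the same gap is why your closing remark about $\mu(M)=3,4$ giving a ``controlled deficiency'' is also unsupported.
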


Next theorem deals with the case when $\mu(M)=3 $ and we have proved that

\begin{theorem}\label{2}
	Let $(A,\mathfrak{m})$ be a hypersurface ring  of dimension $d$ with infinite residue field and  $M$ an MCM $A$-module with $\mu(M)=3$.
	If  $red(M) \leq 2$, then depth$G(M)\geq d-2$.
\end{theorem}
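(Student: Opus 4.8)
The plan is to prove the bound by descending to the critical dimension $d=3$, where the claimed inequality reads $\depth G(M)\geq 1$, and then lifting it by Sally's machine. Since the residue field is infinite, I may choose a sufficiently general element $x\in\m$; lifting it to a general $\tilde x\in\n$ I may assume $\tilde x\in\n\setminus\n^2$, so that $A/(x)=Q/(f,\tilde x)$ is again a hypersurface ring, now of dimension $d-1$ and with the same order of $f$. Because $M$ is MCM, such an $x$ is $M$-superficial and $M$-regular, so $\ov M=M/xM$ is MCM over $A/(x)$ with $\mu(\ov M)=\mu(M)=3$, and passing to a general superficial element does not increase the reduction number, whence $\red(\ov M)\leq 2$. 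Iterating, a general superficial sequence $x_1,\dots,x_{d-3}$ yields $N=M/(x_1,\dots,x_{d-3})M$, an MCM module over a $3$-dimensional hypersurface with $\mu(N)=3$ and $\red(N)\leq 2$.

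Next I would record the lifting step. By the module version of Sally's machine, if $x_1,\dots,x_s$ is a superficial sequence on a \CM\ module and $\depth G(N)\geq 1$, then $x_1^{*},\dots,x_s^{*}$ form a $G(M)$-regular sequence and $\depth G(M)\geq s+1$. Taking $s=d-3$, it therefore suffices to prove $\depth G(N)\geq 1$, and this gives $\depth G(M)\geq d-2$. It is worth noting that the dimension-$3$ statement cannot itself be obtained by further Sally descent: reducing to dimension $2$ and lifting would force $G(M)$ to be \CM, which already fails for $\red=2$ (cf. \cite[Example 3.3]{PuMCM}). Hence the base case is genuine and must be established on its own.

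For the base case I would show $\depth G(N)\geq 1$ by exhibiting an $N$-superficial element $x$ whose initial form $x^{*}\in G(A)_1$ is a nonzerodivisor on $G(N)$. By the Valabrega--Valla criterion for a single element this is equivalent to the colon equalities $(\m^{n+1}N:_{N}x)=\m^{n}N$ for all $n\geq 1$, the inclusion $\supseteq$ being automatic and $x$ being $N$-regular. Because $\red(N)\leq 2$ we have $\m^{n+1}N=J\m^{n}N$ for $n\geq 2$, where $J$ is a minimal reduction chosen to contain $x$; a standard superficiality argument then collapses the infinitely many conditions to the finitely many low-degree ones, essentially $n=1$ and $n=2$.

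The heart of the matter, and the step I expect to be the main obstacle, is verifying these two colon equalities. Here I would invoke the minimal presentation $0\to Q^{3}\xrightarrow{\phi}Q^{3}\to N\to 0$, that is, the matrix factorization $(\phi,\psi)$ with $\phi\psi=\psi\phi=f\cdot I$: the relations among the minimal generators of $N$ are precisely the columns of $\phi$, so whether a nongenerator lies in $(\m^{2}N:_{N}x)$ is governed by the $\n$-adic orders of the entries of $\phi$ together with the position of $x$ relative to $J$. The hypothesis $\mu(N)=3$ keeps this relation module small enough to analyze directly, and $\red(N)\leq 2$ rules out the higher-order obstructions; combining the two should eliminate any socle element of $G(N)$ in degrees $1$ and $2$, yielding $\depth G(N)\geq 1$. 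I expect this computation to run parallel to, and to build on, the $\mu=2$ analysis behind Theorem \ref{1}.
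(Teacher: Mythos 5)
Your overall skeleton---pass to a general superficial sequence to reach a three-dimensional module $N$ with $\mu(N)=3$ and $\red(N)\leq 2$, prove $\depth G(N)\geq 1$ there, and lift by Sally descent (\ref{Sally-des})---is exactly the architecture of the paper's proof of Theorem \ref{muM=3}, and your observation that the base case genuinely lives in dimension $3$ (since reducing further and lifting would force $G(M)$ to be \CM, which fails) is correct. The gap is that your base case is never proved: the paragraph beginning ``the heart of the matter'' is a plan, not an argument, and it is precisely the entire content of the theorem. Saying that membership in $(\m^{2}N:_{N}x)$ is ``governed by the $\n$-adic orders of the entries of $\phi$'' and that $\red(N)\leq 2$ ``rules out the higher-order obstructions'' names the inputs but performs no elimination; nothing in your text produces the colon equalities, and no mechanism is given by which $\mu(N)=3$ ``keeps the relation module small enough.''

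There is concrete evidence that this step cannot be dispatched the way you suggest. The paper's proof classifies $M/\underline{x}M\cong Q'/(y^{a_1})\oplus Q'/(y^{a_2})\oplus Q'/(y^{a_3})$ over the DVR $Q'=Q/(\underline{x})$ (this is where the $\phi$-superficial machinery of \ref{phi} is needed, which your reduction does not track) and runs through ten cases. In the critical case $(a_1,a_2,a_3)=(1,2,2)$ one lands in a situation where \emph{both} reductions $M_1$ (dimension $2$) and $M_2$ (dimension $1$) have $\depth G=0$ and $h_M(z)=3+3z^2-z^3$ has a negative coefficient; there $\depth G(M)\geq 1$ is \emph{not} obtained from a Valabrega--Valla colon check on a single superficial element, but from the complete intersection approximation bound $\depth G(M)\geq d-\delta$ for $\delta=\sum_n\ell\bigl(\m^{n+1}M\cap JM/J\m^nM\bigr)\leq 2$ (\cite[Theorem 5.1]{apprx}), combined with the Hilbert-series constraint on $\overline{G(M)}$ from \ref{overline{G(M)}} and Ratliff--Rush computations. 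Note that in that case $\delta=2\neq 0$, so the Valabrega--Valla conditions for the full reduction $J$ genuinely fail even though the depth bound holds; a ``direct analysis'' of $\phi$ would have to detect this delicate phenomenon, and your sketch gives no way to do so. Finally, your claim that $\red(N)\leq 2$ collapses the conditions $(\m^{n+1}N:_{N}x)=\m^{n}N$ to $n=1,2$ is asserted without proof for a three-dimensional module: the paper's tool for such collapsing, the exact sequence \ref{exact seq}, is a statement about two-dimensional modules and is applied to the reductions $M_1$, never to the three-dimensional module itself. As it stands, the proposal reduces the theorem to a statement at least as hard as the theorem and stops there.
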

Next theorem deals with the case $\mu(M)=4$ and we have proved that
\begin{theorem}\label{3}
	Let $(A,\mathfrak{m})$ be a  hypersurface ring of dimension $d$ with $e(A)=3$ and $M$ an MCM $A$-module. If   $\mu(M)=4$, then depth$G(M)\geq d-3$.
\end{theorem}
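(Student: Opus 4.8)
The plan is to argue by induction on $d=\dim M$, stripping off one superficial element at a time via Sally's machine and reducing the entire statement to the single base case $d=\mu(M)=4$. After completing and using that the residue field is infinite (harmless, and part of our standing hypotheses) we have $A=Q/(f)$ with $(Q,\mathfrak{n})$ regular local and $f\in\mathfrak{n}^{3}\setminus\mathfrak{n}^{4}$, which is exactly the hypothesis $e(A)=3$; hence $\red(A)=2$, so $\red(M)\le 2$, and by the result recalled in the introduction $G(A)$ is \CM. Since $M$ is MCM we fix a matrix factorization $(\phi,\psi)$ of $f$ over $Q$, that is, a minimal presentation $0\to Q^{4}\xrightarrow{\phi}Q^{4}\to M\to 0$ together with a companion map $\psi$ satisfying $\phi\psi=\psi\phi=f\cdot I_{4}$, both matrices having entries in $\mathfrak{n}$. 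The orders of these entries, together with $\operatorname{ord}(f)=3$ (which controls $e(M)$ through $\det\phi$), are the numerical data we shall exploit.

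For the inductive step suppose $d\ge 5$ and pick $x\in\mathfrak{m}\setminus\mathfrak{m}^{2}$ whose lift to $Q$ is a generic linear form, so that $x$ is a nonzerodivisor and is superficial both for $M$ and for $A$. Then $A/xA$ is again a hypersurface, of dimension $d-1$ and with $e(A/xA)=3$ (a generic linear form does not lower the order of $f$), while $M/xM$ is an MCM $A/xA$-module with $\mu(M/xM)=\mu(M)=4$ (since $x\in\mathfrak{m}$) and $\red(M/xM)\le 2$. The inductive hypothesis gives $\depth G(M/xM)\ge (d-1)-3=d-4\ge 1$. As $\depth G(M/xM)>0$, Sally's machine shows that the initial form $x^{*}\in G(A)_{1}$ is a nonzerodivisor on $G(M)$ and that $\depth G(M)=\depth G(M/xM)+1\ge d-3$. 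This closes the induction; the remaining cases $d\le 3$ are vacuous because then $d-3\le 0$.

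Everything therefore comes down to the base case $d=4$, where we must prove $\depth G(M)\ge 1$. Equivalently, for a superficial element $x$ we must show that $x^{*}$ is a nonzerodivisor on $G(M)$, i.e.\ that the Ratliff--Rush filtration of $M$ agrees with the $\mathfrak{m}$-adic one. Because $\red(M)\le 2$, these two filtrations coincide in all sufficiently high degrees, so the obstruction to positive depth lives in a bounded range of low degrees and $\depth G(M)\ge 1$ becomes a finite, explicit condition. To settle it I would read off $G(M)$ directly from the leading-form data of $\phi$: the bound $f\in\mathfrak{n}^{3}\setminus\mathfrak{n}^{4}$ constrains the orders of the entries of $\phi$ and hence pins down both the multiplicity $e(M)$ and the $h$-polynomial $h_{M}(z)=4+h_{1}z+h_{2}z^{2}$. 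The aim is to combine this numerical rigidity with a length count of $\widetilde{\mathfrak{m}M}/\mathfrak{m}M$ to force the low-degree discrepancy to vanish.

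The main obstacle is precisely this base case, namely excluding torsion (depth $0$) in $G(M)$ when $M$ has four generators. This is where the sharper hypothesis $e(A)=3$ is essential rather than the bare condition $\red(M)\le 2$ that sufficed for $\mu(M)=2,3$: with four generators the $\mathfrak{m}$-adic filtration of $M$ has enough slack to be non-Ratliff--Rush in low degree, and it is only the tight bound on $e(M)$ forced by $\operatorname{ord}(f)=3$ that removes this slack. I expect the crux to be showing that any element of $\widetilde{\mathfrak{m}M}\setminus\mathfrak{m}M$ would make $h_{M}(z)$, equivalently the presentation matrix $\phi$, violate the order constraints imposed by $e(A)=3$, and I anticipate this length-and-order calculation to be the most delicate part of the argument.
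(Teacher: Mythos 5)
Your inductive scaffolding is sound and matches the paper: the paper likewise treats $d\le 3$ as vacuous and, for $d\ge 5$, passes to the dimension-four quotient $M_{d-4}=M/(x_1,\ldots,x_{d-4})M$ and lifts the conclusion by Sally descent. But everything you say about the base case $d=4$ is a plan (``I would\ldots'', ``I expect\ldots'', ``I anticipate\ldots''), not a proof, and that base case is the entire content of the theorem. Worse, two numerical assumptions the plan rests on are false. First, $e(A)=3$ does not ``pin down'' $e(M)$: without free summands one only gets $4\le e(M)\le 8$ (and up to $12$ with free summands), and the paper's proof is a separate, genuinely different analysis for each value $e(M)=4,\ldots,8$. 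Second, the $h$-polynomial of $M$ need not have the shape $4+h_1z+h_2z^2$: in the hardest case the paper obtains $h_M(z)=3+4z+(1-z)^4$, of degree four, and in another case $h_M(z)=4+3z^2-z^3$, with a negative coefficient; degree $\le 2$ is guaranteed only modulo a maximal superficial sequence. So the ``numerical rigidity'' you hope to exploit is simply not there at the level of $h_M$.

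The deeper gap is in the mechanism you propose for the base case. Proving $\depth G(M)\ge 1$ in dimension four does amount to showing $\widetilde{\mathfrak{m}^nM}=\mathfrak{m}^nM$ for all $n$, but this cannot be forced by a length count on $\widetilde{\mathfrak{m}M}/\mathfrak{m}M$ together with order constraints on $\phi$, because in the critical case $e(M)=7$ (i.e. $M_4\cong Q'/(y)\oplus (Q'/(y^2))^3$) the discrepancy genuinely survives in the quotients: the paper's hardest subcase is exactly the one with $\depth G(M_1)=\depth G(M_2)=\depth G(M_3)=0$, so $\widetilde{\mathfrak{m}M_i}\ne\mathfrak{m}M_i$ there, and yet one must still conclude $\widetilde{\mathfrak{m}^nM}=\mathfrak{m}^nM$. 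Handling this requires the machinery the paper develops: tracking the Ratliff--Rush filtration along the superficial sequence (Claims (1) and (2) in Case (4)), showing the modules $\widetilde{G(M_i)}$ are Cohen-Macaulay, deducing via Proposition \ref{ASSG} that $G(M)$ would be generalized Cohen-Macaulay, and then using Proposition \ref{vector} to manufacture two superficial elements $u,v$ and derive a contradiction from $a\in(\mathfrak{m}^2M:(u-\theta v))=\mathfrak{m}M$. None of this is visible in your sketch. You also omit the free-summand reduction: the paper first proves the bound for $M$ without free summands, using Lemma \ref{le1} to force all $a_i\le 2$ in $M_d\cong\oplus_i Q'/(y^{a_i})$, and then treats $M\cong N\oplus A^s$ by applying the $\mu\le 3$ theorems to $N$. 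As written, your argument establishes only the easy reduction to dimension four; the theorem itself remains unproved.
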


 If $\mu(M)=r$ and $det(\phi) \in \mathfrak{n}^r\setminus\mathfrak{n}^{r+1}$  then we know that $e(M)=\mu(M)$ (see \cite[Theorem 2]{PuMCM}). So $M$ is an Ulrich module. This implies $G(M)$ is \CM. Here we consider the case when $det(\phi) \in \mathfrak{n}^{r+1}\setminus\mathfrak{n}^{r+2}$ and  prove 
\begin{theorem}\label{4}
	Let $({Q},\mathfrak{n})$ be regular local ring of dimension $d+1$ with $d\geq 0$. Let $M$ be a $Q$-module with minimal presentation $$0\rt Q^r\xrightarrow{\phi} Q^r \rt M \rt 0$$
	
	Now if $\phi = [a_{ij}]
	$
	where $a_{ij} \in \mathfrak{n}$ with  $f=det(\phi) \in \mathfrak{n}^{r+1}\setminus \mathfrak{n}^{r+2}$ and $red(M)\leq 2$, then depth$G(M)\geq d-1$. In this case we can also prove that
	\begin{enumerate}
		\item $G(M)$ is \CM \  if and only if $h_M(z)=r+z$.
		\item depth $G(M)=d-1$ if and only if $h_M(z)=r+z^2$
	\end{enumerate}
\end{theorem}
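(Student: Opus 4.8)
The plan is to reduce the three assertions to two numerical facts about the Hilbert coefficients of $M$. First I would record the structural data. Since $f=\det\phi$ and $\phi$ together with its adjugate form a reduced matrix factorization of $f$, the module $M=\operatorname{coker}(\phi)$ is an MCM $A$-module over the hypersurface $A=Q/(f)$, and $f\in\mathfrak{n}^{r+1}\setminus\mathfrak{n}^{r+2}$ gives $e(A)=r+1$. Writing $S=G(Q)$ (a polynomial ring in $d+1$ variables) and $f^{*}$ for the leading form of $f$, one has $\operatorname{in}\big((f)\big)=(f^{*})$, so $G(A)=S/(f^{*})$ is again a hypersurface and hence \CM. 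Because $\red(M)\leq 2$, the $h$-polynomial has degree at most $2$, say $h_M(z)=r+h_1z+h_2z^{2}$, so that $e_0(M)=r+h_1+h_2$ and $e_1(M)=h_1+2h_2$.

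Next I would pin down $e_0$ and $e_1$. Both are preserved on passing to $\overline{M}=M/xM$ for a general superficial $x$, and the entire hypothesis package (hypersurface, reduced matrix factorization, $\det\phi$ of order exactly $r+1$) descends to $\overline{A}=A/(x)$ and $\overline{M}$; so by induction on $d$ I may assume $\dim A=1$ in order to evaluate them. In dimension one, with $(x)$ a minimal reduction, $e_0(M)=\ell(M/xM)$, and an analysis of the presentation in the spirit of \cite{PuMCM} shows $e_0(M)$ equals the $\mathfrak{n}$-adic order of $\det\phi$, namely $r+1$. Thus $h_1+h_2=1$, and Northcott's inequality $e_1(M)\geq e_0(M)-\mu(M)=1$ yields $h_2\geq 0$.

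The crux, and the step I expect to be the main obstacle, is the reverse bound $h_2\leq 1$, equivalently $e_1(M)\leq 2$. Here I would exploit that $\det\phi$ has order exactly $r+1$, one more than the generic value $r$: this forces the matrix $L$ of linear parts of $\phi$ to satisfy $\det L=0$ with generic rank exactly $r-1$, so that the initial submodule $\operatorname{in}\big(\operatorname{image}\phi\big)\subseteq S^{r}$ exceeds the submodule generated by the initial forms of the columns of $\phi$ by at most a one-dimensional ``Artin--Rees'' defect. Quantitatively this single unit of slack in $\det\phi$ caps the Ratliff--Rush correction $\sum_{n}\ell\big(\widetilde{\mathfrak{m}^{n}M}/\mathfrak{m}^{n}M\big)$, and hence $e_1(M)$, by one. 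Making this defect estimate precise and uniform in $r$ is the technical heart of the argument.

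Granting $e_0(M)=r+1$ and $e_1(M)\leq 2$, that is, $e_1(M)\leq e_0(M)-\mu(M)+1$, the depth statements follow formally, since the only admissible $h$-polynomials are then $r+z$ (the case $h_2=0$) and $r+z^{2}$ (the case $h_2=1$). By the theory of modules of almost minimal multiplicity (Sally, and Rossi--Valla), refined in dimension one through the Ratliff--Rush filtration and lifted across the superficial sequence, one gets $\depth G(M)\geq d-1$ in all cases. For (1): $G(M)$ is \CM\ \ff\ $e_1(M)=e_0(M)-\mu(M)$ \ff\ $h_M(z)=r+z$; indeed $r+z^{2}$ is not a \CM\ $h$-vector, because $G(M)$ is generated in degree $0$ and such a module cannot have $h_1=0<h_2$. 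For (2): since $\depth G(M)\in\{d-1,d\}$, it equals $d-1$ precisely when $G(M)$ fails to be \CM, that is, precisely when $h_M(z)=r+z^{2}$.
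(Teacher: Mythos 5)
Your proposal has two genuine gaps, and they sit precisely where the paper does its real work. The first is the opening claim that $\red(M)\leq 2$ forces $h_M(z)=r+h_1z+h_2z^2$. This implication holds in dimension one (where the coefficients are controlled by the lengths $\rho_n=\ell(\mathfrak{m}^{n+1}M/x\mathfrak{m}^nM)$, which vanish for $n\geq 2$), but it is false in dimension $d$: the degree of the $h$-polynomial is not bounded by the reduction number unless one already knows $\depth G(M)\geq d-1$, so that $h_M$ is preserved modulo a superficial sequence --- and that depth bound is what you are trying to prove. This very paper exhibits MCM modules over hypersurfaces with $\red(M)=2$ and $h_M(z)=3+3z^2-z^3$ (Theorem \ref{muM=3}, Case 4), or $4+3z^2-z^3$ and $3+4z+(1-z)^4$ (the theorem of Section 5). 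So your identities $e_0=r+h_1+h_2$ and $e_1=h_1+2h_2$ cannot be assumed at the outset; the normal form of $h_M$ is part of the conclusion, not a consequence of the hypotheses. The second gap you flag yourself: the bound $h_2\leq 1$ (equivalently $e_1(M)\leq 2$) is exactly the hard step, and the ``Artin--Rees defect'' discussion is a heuristic, not a proof. Moreover, the machinery you invoke at the end to convert the numerical data into a depth bound --- a module version of Sally's almost-minimal-multiplicity theorem --- is not an off-the-shelf result: the ring case is due to Rossi--Valla and Wang, and the module analogue needed here is essentially this paper's own Theorem \ref{em=mum} (the case $i(M)=1$, $e(M)=\mu(M)+1$). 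So both pillars of your argument are assumed rather than established.

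For contrast, the paper never manipulates $e_0,e_1$ of $M$ in dimension $d$. It passes to a maximal $\phi$-superficial sequence: over the resulting DVR $Q'$ the hypothesis $v(\det\phi)=r+1$ gives $M_d\cong (Q'/(y))^{r-1}\oplus Q'/(y^2)$, hence $h_{M_d}=r+z$; in dimension one, $\red(M)\leq 2$ and nonnegativity of the $h$-coefficients (\ref{d=1}) leave only $h_{M_1}=r+z$ or $r+z^2$; and in dimension two the possibility $\depth G(M)=0$ is excluded by combining Singh's equality (\ref{mod-sup}), which gives $\sum_i b_i(x_1,M)\leq e_2(M_1)=1$, with the exact sequence \ref{exact seq} and the observation \ref{overline{G(M)}} that $r+z+2z^2$ is not a possible Hilbert series of $G(M)/(x_1^*,x_2^*)G(M)$. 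Sally descent (\ref{Sally-des}) then lifts the conclusion to dimension $d$. If you want to keep your outline, the workable order is the reverse of yours: first establish $\depth G(M)\geq d-1$ (either by the dimension-two argument just sketched or by quoting Theorem \ref{em=mum} with $i(M)=1$), and only then deduce from $\red(M)\leq 2$ that $\deg h_M\leq 2$, which pins down $h_M\in\{r+z,\ r+z^2\}$ and yields assertions (1) and (2).
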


Let $0\rt Q^{\mu(M)}\xrightarrow{\phi}Q^{\mu(M)}\rt M\rt 0$ be a minimal presentation of $M$ over $Q$. Set $i(M)=$ max\{$i|$all entries of  $\phi$ are in $\mathfrak{n}^i$\}. Then
from \cite[Theorem 2]{PuMCM} we know that $e(M)\geq \mu(M)i(M)$ for any MCM module $M$ over a hypersurface ring; in that paper, it is given that if $e(M)=\mu(M)i(M)$ then $G(M)$ is \CM. 

Here we  consider the case when   $e(M)=\mu(M)i(M)+1$ and prove that:
\begin{theorem}\label{5}
	Let $(Q,\mathfrak{n})$ be a regular local ring of dimension $d+1$, $g\in \mathfrak{n}^i\setminus\mathfrak{n}^{i+1}$, $i\geq 2$. Let $(A,\mathfrak{m})=(Q/(g),\mathfrak{n}/(g))$  and $M$ be an MCM $A$-module. Now  if $e(M)=\mu(M)i(M)+1$ then depth$G(M)\geq d-1$ and $h_M(z)=\mu(M)(1+z+\ldots+z^{i(M)-1})+z^s$ where $s\geq i(M)$. Furthermore, $G(M)$ is \CM\ if and only if $s=i(M)$.  
\end{theorem}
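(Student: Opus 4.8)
\section*{Proof proposal}

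The plan is to reduce everything to dimension one by cutting down with general elements, compute the $h$-polynomial there, and then read off both the depth estimate and the \CM\ criterion from the shape of the $h$-vector. Write $n=i(M)$ and $S=G(Q)=k[X_1,\dots,X_{d+1}]$, a polynomial ring, so that $G(A)=S/(g^{*})$ with $\deg g^{*}=i$. The starting observations are two determinantal facts. Since $M=\operatorname{coker}\phi$ is MCM over $A=Q/(g)$, matrix factorization gives $\det\phi\doteq g^{\,r_0}$ with $r_0=\operatorname{rank}_A M$, so $\operatorname{ord}(\det\phi)=r_0 i=e(M)=\mu(M)\,n+1$. Letting $\Phi=(a_{ij}^{*})$ be the matrix of degree-$n$ initial forms of the entries over $S$, the form $\det\Phi$ is exactly the degree-$\mu(M)n$ component of $\det\phi$; as $\operatorname{ord}(\det\phi)=\mu(M)n+1>\mu(M)n$, we get $\det\Phi=0$. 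Moreover $r_0\le\mu(M)$ forces $\mu(M)n+1=r_0 i\le\mu(M)i$, hence $n=i(M)\le i-1$; this is what makes the ``free part'' of the Hilbert function have the right length. First I would record that general elements $x_1,\dots,x_{d-1}\in\mathfrak{m}$ form an $M$-superficial (hence $M$-regular) sequence preserving $\mu(M)$, $i(M)$, $e(M)$ and the whole numerator $h_M(z)$, so for the computation of $h_M(z)$ I may assume $d=1$.

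Next I would pin down $h_M(z)$ in the one-dimensional case. Because every entry of $\phi$ lies in $\mathfrak{n}^{n}$, the image of $\phi$ sits inside $\mathfrak{n}^{n}Q^{\mu(M)}$, so for $0\le j\le n-1$ there are no relations among the generators of $M$ in degree $j$; this gives $\ell(\mathfrak{m}^{j}M/\mathfrak{m}^{j+1}M)=\mu(M)(j+1)$, i.e. $h_j=\mu(M)$ for $0\le j\le n-1$, and in particular $\sum_{j\ge n}h_j=e(M)-\mu(M)n=1$. To see that this remaining mass is a single monomial, set $t_j=\ell(\mathfrak{m}^{j}M/\mathfrak{m}^{j+1}M)$ and use two general properties of the one-dimensional \CM\ module $M$ with minimal reduction $(x)$: first $t_j\le e(M)$ for all $j$, and second, if $t_{j}=e(M)$ then $\mathfrak{m}^{j+1}M=x\mathfrak{m}^{j}M$, whence $t_{j+1}=e(M)$ as well. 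Combined with the crux lower bound $t_j\ge\mu(M)n$ for $j\ge n$ (discussed below), the sequence $t_{n-1},t_n,\dots$ starts at $\mu(M)n$, takes values only in $\{\mu(M)n,\ \mu(M)n+1\}$, and cannot return to $\mu(M)n$ once it reaches $e(M)=\mu(M)n+1$. Hence $t_j=\mu(M)n$ for $n-1\le j\le s-1$ and $t_j=\mu(M)n+1$ for $j\ge s$, which is precisely $h_M(z)=\mu(M)(1+z+\cdots+z^{n-1})+z^{s}$ with $s\ge n$.

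With $h_M(z)$ in hand I would handle depth and the \CM\ criterion together via superficial elements. Proving $\depth G(M)\ge d-1$ amounts to showing that the general sequence $x_1^{*},\dots,x_{d-1}^{*}$ is $G(M)$-regular; by the standard successive-quotient criterion this reduces to checking $\depth G(M')\ge 1$ for the intermediate modules $M'$ of dimension $\ge 2$, which (via the Valabrega--Valla criterion and the explicit Hilbert function above) is equivalent to the absence of a ``gap'' obstruction that can only occur in dimension one. For the \CM\ statement, reduce fully to the one-dimensional $M_{d-1}=M/(x_1,\dots,x_{d-1})M$, whose $h$-polynomial is still $\mu(M)(1+\cdots+z^{n-1})+z^{s}$: here $G(M)$ is \CM\ iff $x^{*}$ is a nonzerodivisor on $G(M_{d-1})$, in which case $G(M_{d-1})/x^{*}G(M_{d-1})$ is a graded module generated in degree $0$ with Hilbert function equal to the coefficients of $h_M(z)$. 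A module generated in degree $0$ cannot have its Hilbert function vanish in some degree and revive afterwards, so the presence of the stretch $h_n=\cdots=h_{s-1}=0$ followed by $h_s=1$ (i.e. $s>n$) is incompatible with $x^{*}$ being regular, forcing $\depth G(M)=d-1$; conversely $s=n$ removes the gap and yields a \CM\ $G(M)$. This gives the dichotomy $G(M)$ is \CM\ $\iff s=i(M)$.

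The main obstacle is the lower bound $t_j\ge\mu(M)\,i(M)$ for $j\ge i(M)$ used in the second paragraph. The minimal presentation only bounds $t_j$ from above (via the natural surjection $\operatorname{coker}(\overline{\Phi})\twoheadrightarrow G(M)$, where $\overline{\Phi}=\Phi\bmod g^{*}$ over $G(A)$), so the lower bound is equivalent to controlling the \emph{extra} relations of $G(M)$ beyond the initial forms of the columns of $\phi$. This is exactly where the hypothesis $e(M)=\mu(M)i(M)+1$, rather than some larger excess, is indispensable: the total defect of $G(M)$ from the free-type module is measured by $e(M)-\mu(M)i(M)=1$, so at most one unit of length can be lost across all degrees. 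I would make this precise by analysing $\overline{\Phi}$ using $\det\Phi=0$ together with the fact that $\Phi$ has rank $\mu(M)-1$ over the fraction field of $S$, showing that $\operatorname{coker}(\overline{\Phi})$ exceeds the free model by a single-length correction and that $G(M)$ inherits no further loss; the delicate point is ensuring this correction is concentrated in one degree, which is what produces the single $z^{s}$.
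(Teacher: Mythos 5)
Your proposal breaks down at the two places where the actual work lies. First, circularity: you ``record'' that a general superficial sequence preserves the whole numerator $h_M(z)$, and thereby reduce the computation of $h_M(z)$ to dimension one. But modulo a superficial element one only has $h_M(z)=h_{M/xM}(z)-(1-z)^{\dim M}b_{x,M}(z)$, and $h_M(z)=h_{M/xM}(z)$ holds \emph{if and only if} $\depth G(M)\geq 1$ (see \ref{Property}(8)); iterating, the preservation you assume is exactly equivalent to $\depth G(M)\geq d-1$, which is the statement being proved. Second, the depth bound itself: you reduce (correctly, by Sally descent, \ref{Sally-des}) to showing $\depth G(M')\geq 1$ for the two-dimensional reduction $M'$, but then dismiss this as ``the absence of a gap obstruction that can only occur in dimension one.'' That is not a theorem, and it is false as a principle: for $\red(M)=2$ the depth of $G(M)$ can genuinely drop in every dimension (this is the subject of Sections 3--5 of the paper). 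The paper's proof at this point is concrete: it computes $h_n(M)=h_n(M_1)=h_n(M_2)=\mu(M)$ for $n\leq i(M)-1$ directly from the presentation (your ``no relations in low degrees'' observation, but carried out for the two-dimensional $M$ and for $M_1$, not only in dimension one), converts these equalities through Singh's equality (\ref{mod-sup}) into $\mathfrak{m}^{n+1}M:x_1=\mathfrak{m}^nM$ and $\mathfrak{m}^{n+1}M_1:x_2=\mathfrak{m}^nM_1$, chases elements to obtain the Valabrega--Valla conditions $\mathfrak{m}^{n+1}M\cap JM=J\mathfrak{m}^nM$ for $n\leq i(M)-1$ together with $v_{i(M)-1}=\ell(\mathfrak{m}^{i(M)}M/J\mathfrak{m}^{i(M)-1}M)\leq 1$, and then invokes \cite[Theorem 4.2.1]{Rossi}. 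Nothing in your proposal substitutes for this verification.

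Third, the bound you yourself flag as ``the main obstacle,'' namely $t_j\geq\mu(M)i(M)$ for $j\geq i(M)$ in dimension one, is left unproven; this is precisely the non-negativity of the $h_i$ in \ref{d=1}(1), i.e. \cite[Lemma 4.7]{PuMCM}, which the paper simply quotes. (Once non-negativity is known, your worry about the excess being ``concentrated in one degree'' is automatic, since the excess totals $1$.) Your determinantal route to it is also flawed at the outset: matrix factorization does not give $\det\phi\doteq g^{r_0}$ when $g$ is reducible (take $g=f_1f_2$ and $M=Q/(f_1)$); the correct statement $e(M)=v_Q(\det\phi)$ is a theorem of \cite{PuMCM}, not a consequence of that argument. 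The parts of your proposal that do work --- the computation $h_j=\mu(M)$ for $j\leq i(M)-1$ from the presentation, the propagation argument once $t_j=e(M)$, and the Cohen--Macaulay criterion via the fact that a graded module generated in degree zero cannot have a Hilbert function that vanishes and then revives --- all match steps of, or are implicit in, the paper's proof, but they sit on top of the unproven reductions above.
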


Now if we set $Q=k[[x_1,\ldots,x_{d+1}]]$ and $\phi=\sum_{i\geq i(M)}\phi_i$, where  $\phi_i$'s are forms of degree $i$.
We know that if $det\phi_{i(M)}\neq0$ then $G(M)$ is \CM\ (see \cite[Proposition 4.1]{PuMCM}). Here we consider the case when rank($\phi_{i(M)})=\mu(M)-1$ with det$(\phi_{i(M)}+\phi_{i(M)+1})\ne0$ and prove that

\begin{corollary} \label{6}
	Let $Q=k[[x_1,\ldots,x_{d+1}]]$.  Let $M$ be $Q$-modules with minimal presentation $0\rt Q^{\mu(M)}\xrightarrow{\phi}Q^{\mu(M)}\rt M\rt 0$. Set $\phi=\sum_{i\geq i(M)}\phi_i$, where $\phi_i$'s are forms of degree $i$. Now if rank($\phi_{i(M)})=\mu(M)-1$ and det$(\phi_{i(M)}+\phi_{i(M)+1})\ne0$ then depth$G(M)\geq d-1$ and
	$h_M(z)=\mu(M)(1+z+\ldots+z^{i(M)-1})+z^s$ where $s\geq i(M)$. Also, $G(M)$ is \CM\ if and only if $s=i(M)$.
	
\end{corollary}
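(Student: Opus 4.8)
The plan is to reduce Corollary~\ref{6} to Theorem~\ref{5} by showing that the rank hypothesis on $\phi_{i(M)}$ together with $\det(\phi_{i(M)}+\phi_{i(M)+1})\neq 0$ forces the multiplicity condition $e(M)=\mu(M)i(M)+1$. First I would set $i=i(M)$ and $r=\mu(M)$ for brevity and recall that, since $A=Q/(f)$ with $f=\det(\phi)$, the multiplicity of $M$ as an $A$-module is governed by the order of $\det\phi$ at $\n$; more precisely, from the theory in \cite{PuMCM} one has $e(M)=\operatorname{ord}_{\n}(\det\phi)$ when $M$ is MCM over the hypersurface $A$. So the whole problem becomes a computation of the $\n$-adic order of $\det\phi$ under the stated rank conditions.

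The key step is to analyze $\det\phi$ homogeneously. Writing $\phi=\phi_i+\phi_{i+1}+\phi_{i+2}+\cdots$ with each $\phi_j$ a matrix of forms of degree $j$, the determinant expands as a sum of products of $r$ entries, so its lowest-degree part has degree at least $ri$ and equals $\det\phi_i$ in that degree. The hypothesis $\operatorname{rank}(\phi_i)=r-1$ says precisely that $\det\phi_i=0$, so the degree-$ri$ piece vanishes and $\operatorname{ord}_{\n}(\det\phi)\geq ri+1$. I would then show the degree-$(ri+1)$ piece of $\det\phi$ is nonzero. This piece is obtained by replacing exactly one factor $\phi_i$ by the corresponding entry of $\phi_{i+1}$ in the determinant expansion, which is exactly the degree-$(ri+1)$ component of $\det(\phi_i+\phi_{i+1})$. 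Since $\det(\phi_i+\phi_{i+1})\neq 0$ and its lowest possible degree is $ri$ (where it equals $\det\phi_i=0$), its nonzero lowest-degree part must sit in degree $ri+1$; hence that degree-$(ri+1)$ form is the leading term of $\det\phi$ and is nonzero. Therefore $\operatorname{ord}_{\n}(\det\phi)=ri+1$, giving $e(M)=ri+1=\mu(M)i(M)+1$.

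With this established, the hypotheses of Theorem~\ref{5} are met: $A=Q/(g)$ with $g=f=\det\phi\in\n^{ri+1}\setminus\n^{ri+2}$, so $i$ in the sense of Theorem~\ref{5} matches $i(M)$, and $e(M)=\mu(M)i(M)+1$. I would then simply invoke Theorem~\ref{5} to conclude $\depth G(M)\geq d-1$, the formula $h_M(z)=\mu(M)(1+z+\cdots+z^{i(M)-1})+z^s$ with $s\geq i(M)$, and the characterization that $G(M)$ is \CM\ \ff\ $s=i(M)$.

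The main obstacle I anticipate is the claim that $e(M)$ equals the $\n$-adic order of $\det\phi$; I would need to confirm this follows cleanly from the cited multiplicity results (the bound $e(M)\geq\mu(M)i(M)$ from \cite[Theorem 2]{PuMCM} handles the inequality, but the exact value requires that no higher-order cancellation occurs, which is exactly what the degree-$(ri+1)$ analysis secures). A secondary subtlety is verifying that the degree-$(ri+1)$ component of $\det\phi$ really coincides with that of $\det(\phi_i+\phi_{i+1})$ and is unaffected by $\phi_{i+2}$ and higher terms — this is a routine but careful bookkeeping of degrees in the Leibniz expansion, since any factor drawn from $\phi_{i+2}$ or higher already pushes the total degree to at least $ri+2$.
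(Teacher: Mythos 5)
Your overall plan is the same as the paper's: both arguments reduce the corollary to Theorem~\ref{5} by showing that the hypotheses force $e(M)=\mu(M)i(M)+1$; the paper does this by passing to the DVR $Q'=Q/(\underline{x})$ and reading off a Smith normal form of $\phi\otimes Q'$, while you compute the $\mathfrak{n}$-adic order of $\det\phi$ (these are equivalent in substance, since $e(M)=v_Q(\det\phi)$ after reduction modulo a $\phi$-superficial sequence, as you correctly note). The genuine gap is the step where you claim that, because the degree-$ri$ component of $\det(\phi_i+\phi_{i+1})$ is $\det\phi_i=0$, ``its nonzero lowest-degree part must sit in degree $ri+1$.'' This inference is false: $\det(\phi_i+\phi_{i+1})\neq 0$ only guarantees that \emph{some} homogeneous component in degrees $ri+1,\dots,r(i+1)$ is nonzero, and the degree-$(ri+1)$ component, which equals $\operatorname{trace}\bigl(\operatorname{adj}(\phi_i)\,\phi_{i+1}\bigr)$, can perfectly well vanish. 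Concretely, take $r=2$, $i=i(M)=1$, $Q=k[[x,y]]$ and
$$\phi=\begin{pmatrix} x & y^2\\ y^2 & 0\end{pmatrix},\qquad \phi_1=\begin{pmatrix} x & 0\\ 0 & 0\end{pmatrix},\qquad \phi_2=\begin{pmatrix} 0 & y^2\\ y^2 & 0\end{pmatrix}.$$
Then $\operatorname{rank}(\phi_1)=1=\mu(M)-1$ and $\det(\phi_1+\phi_2)=-y^4\neq 0$, so the hypotheses hold; but the degree-$3$ component of $\det\phi$ vanishes, $v_Q(\det\phi)=4$, hence $e(M)=4\neq\mu(M)i(M)+1=3$. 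Indeed $M/(x-\alpha y)M\cong k[[y]]/(y)\oplus k[[y]]/(y^3)$ for general $\alpha$, and one computes $h_M(z)=2+z+z^3$, which is not of the form $2+z^s$. So your argument cannot be completed as written: what it actually needs is the stronger hypothesis that the degree-$(\mu(M)i(M)+1)$ component of $\det\phi$ is nonzero, i.e.\ $\det\phi\in\mathfrak{n}^{\mu(M)i(M)+1}\setminus\mathfrak{n}^{\mu(M)i(M)+2}$, and only then does Theorem~\ref{5} apply as you intend.

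You should know, however, that the paper's own proof commits exactly the same leap: from ``$\operatorname{rank}\phi'_{i(M)}=\mu(M)-1$ and $\det(\phi'_{i(M)}+\phi'_{i(M)+1})\neq 0$'' it concludes that $\phi\otimes Q'\sim\operatorname{diag}(y^{i(M)},\dots,y^{i(M)},y^{i(M)+1})$, and this is refuted by the same matrix, whose Smith normal form over $Q'$ is $\operatorname{diag}(y,y^3)$, not $\operatorname{diag}(y,y^2)$. So the example above is a counterexample to the corollary as literally stated, not merely to your write-up; the parts of your proposal you worried about (the identity $e(M)=v_Q(\det\phi)$, and the Leibniz bookkeeping showing that $\phi_{i+2}$ and higher terms cannot contribute in degree $ri+1$) are in fact fine, while the step you treated as immediate is where both your argument and the paper's break down.
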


This result ( for $\mu(M)=2,3,4$ and dim$M=2,3$ ) was guessed after many numerical computations done in 2004 by Sangeeta Maini, a project student of the second author.

Here is an overview of the contents of this paper. In  section 2, we give some preliminary which we have used in the paper. In section 3, we discuss $\mu(M)=2$ case and prove Theorem \ref{1}. In section 4, we discuss $\mu(M)=3$ case and prove Theorem \ref{2}. In section 5, we discuss $\mu(M)=4$ case and prove Theorem \ref{3}. In section 6, we discuss $\mu(M)=r$ case and prove theorem \ref{4}. In section 7, we prove Theorem \ref{5}, and as its corollary we prove Corollary \ref{6}. In the last section examples are given.

\section{Priliminaries}
Let $(A,\mathfrak{m})$ be a Noetherian local ring of dimension $d$, and $M$ an $A$-module of dimension $r$.
\s An element $x\in \mathfrak{m}$ is said to be a superficial element of $M$ if there exists an integer $n_0>0$ such that $$(\mathfrak{m}^nM:_Mx)\cap \mathfrak{m}^{n_0}M=\mathfrak{m}^{n-1}M\ \text{for all}\ n>n_0$$
We know that if residue field $k=A/\mathfrak{m}$ is infinite then superficial elements always exist (see \cite[Pg 7]{Sbook}). A sequence of elements $x_1,\ldots,x_m$ is said to be superficial sequence if $x_1$ is $M$-superficial and $x_i$ is $M/(x_1,\ldots,x_{i-1})M$-superficial for $i=2,\ldots,m.$

\begin{remark}
	
	\begin{enumerate}
		\item 	If $x$ is $M-$superficial and regular then we have $(\mathfrak{m}^nM:_Mx)=\mathfrak{m}^{n-1}M$ for all $n\gg 0.$
		
		\item If depth$M>0$ then it is easy to show that  every $M$-superficial element is also $M-$ regular.
	\end{enumerate}

\end{remark}

\s \label{Base change} Let $f:(A,\mathfrak{m})\rt (B,\mathfrak{n})$ be a flat local  ring homomorphism with $\mathfrak{m}B=\mathfrak{n}$. If $M$ is an $A$-module set $M'=M\otimes_A B$, then  following facts are well known 
\begin{enumerate}
	\item $H(M,n)=H(M',n)$ for all $n\geq0$.
	\item depth$_{G(A)}G(M)=$depth$_{G(A')}G(M')$.
	\item projdim$_AM$=projdim$_{A'}M'$
	
\end{enumerate}
We will use this result in the following two cases:
\begin{enumerate}
	\item We can assume $A$ is complete by taking $B=\hat{A}$.
	\item We can assume the residue field of $A$ is infinite, because if the residue field $(k=A/\mathfrak{m})$ is finite we can take $B=A[X]_S$  where $S=A[x]\setminus \mathfrak{m}A[X]$. Clearly, the residue field of $B=k(X)$ is infinite.
\end{enumerate}

Since all the properties we deal in this article are invariant when we go from $A$ to $A'$. So we can assume that residue field of $A$ is infinite.

\s If $a$ is a non-zero element of $M$ and if $i$ is the largest integer such that $a\in \mathfrak{m}^iM$, then we denote image of $a$ in $\mathfrak{m}^i \  M/\mathfrak{m}^{i+1} \ M$ by $a^*$. If $N$ is a submodule of $M$, then $N^*$ denotes the graded submodule of $G(M)$ generated by all $b^*$ with $b\in N$.

\begin{definition}
	Let $(A,\mathfrak{m})$ be a Noetherian local ring and $M\ne 0$ be a finite $A$-module then $M$ is said to be a \CM\ $A$-module if depth $M=$dim $M$, and a maximal \CM\ (MCM) module if depth $M=$dim $A$.
\end{definition}

\s \label{mod-sup} If $x\in \mathfrak{m}\setminus\mathfrak{m}^2$ an $M-$superficial and regular element. Set $N=M/xM$ and $K=\mathfrak{m}/(x)$ then we have {\bf Singh's equality}\index{Singh's equality} ( for $M=A$ see \cite[Theorem 1]{singh}, and for the module case see \cite[Theorem 9]{Pu0})
$$H(M,n)=\ell(N/K^{n+1}N)-\ell\left(\frac{\mathfrak{m}^{n+1}M:x}{\mathfrak{m}^nM}\right)\ \text{for all}\ n\geq0.$$

Set $b_n(x,M)=\ell(\mathfrak{m}^{n+1}M:x/\mathfrak{m}^nM)$ and $b_{x,M}(z)=\sum_{n\geq0}b_n(x,M)z^n$. Notice that $b_0(x,M)=0$. Now we have 
$$h_M(z)=h_N(z)-(1-z)^rb_{x,M}(z)$$

\s \label{Property} (See \cite[Corollary 10]{Pu0}) Let $x\in \mathfrak{m}$ be an $M-$superficial and regular element. Set $B=A/(x)$, $N=M/xM$ and $K=\mathfrak{m}/(x)$ then we have
\begin{enumerate}
	\item dim$M-1$ = dim$N$ and $h_0(N)=h_0(M)$.
	\item $b_{x,M}$ is a polynomial.
	\item $h_1(M)=h_1(N)$ if and only if $\mathfrak{m}^2M\cap xM=x\mathfrak{m}M.$
	\item $e_i(M)=e_i(N)$ for $i=0,\ldots,r-1.$
	\item $e_r(M)=e_r(N)-(-1)^r\sum_{n\geq0}b_n(x,M).$
	\item $x^*$ is $G(M)$-regular if and only if $b_n(x,M)=0$ for all $n\geq0.$
	\item $e_r(M)=e_r(N)$ if and only if $x^*$ is $G(M)$-regular.
	\item depth $G(M)\geq1$ if and only if $h_{M}(z)=h_N(z)$.
\end{enumerate}
\s \label{Sally-des} {\bf Sally-descent }(see \cite[Theorem 8]{Pu0}): Let $(A,\mathfrak{m})$ be a \CM\ local ring of dimension $d$ and $M$ be \CM\ module of dimension $r$. Let $x_1,\ldots,x_c$ be a $M$-superficial sequence with $c\leq r$. Set $N=M/(x_1,\ldots,x_c)M$ then   

depth $G(M)\geq c+1$ if and only if depth $G(N)\geq 1$. 
\s The reduction number\index{reduction number} of $M$ can be defined as the least integer $\ell$ such that there is an ideal $J$ generated by a maximal superficial sequence with $\mathfrak{m}^{\ell+1}M=J\mathfrak{m}^{\ell}M$.

\begin{definition}\label{Ulrich}
	Let $(A,\mathfrak{m})$ be a Noetherian local ring and $M$ be a maximal \CM \ module  then $M$ is said to be a Ulrich module if $e(M)=\mu(M)$.

\end{definition}
\begin{remark}
	When $M$ is an MCM module and $\mathfrak{m}$ has a minimal reduction $J$ generated by a system of parameters, then $M$ is Ulrich module if and only if $\mathfrak{m}M=JM$. 
\end{remark}

\s  (See \cite[section 6]{heinzer}) For any $n\geq1$ we can define Ratliff-Rush submodule of $M$ associated with $\mathfrak{m}^n$ as 
$$\widetilde{\mathfrak{m}^nM}=\bigcup_{i\geq0}(\mathfrak{m}^{n+i}M:_M\mathfrak{m}^i)$$ The filtration $\{\widetilde{\mathfrak{m}^nM}\}_{n\geq1}$ is known as the Ratliff-Rush filtration \index{Ratliff-Rush filtration} of $M$ with respect to $\mathfrak{m}$.

For the proof of the following properties in the ring case see \cite{Ratliff}. This proof can be easily extended for the modules. Also see \cite[2.2]{Naghipour}.

\s If depth$(M)>0$ and  $x\in \mathfrak{m}$ is a $M-$superficial element then we have
\begin{enumerate}
	\item $\widetilde{\mathfrak{m}^nM}=\mathfrak{m}^nM$ for all $n\gg0.$
	\item  $(\widetilde{\mathfrak{m}^{n+1}M}:x)=\widetilde{\mathfrak{m}^nM}$ for all $n\geq1.$
\end{enumerate}
\s\label{htilde} Let $\widetilde{G(M)}=\bigoplus_{n\geq0}\widetilde{\mathfrak{m}^nM}/\widetilde{\mathfrak{m}^{n+1}M}$ be the associated graded module  of $M$ with respect to Ratliff-Rush filtration. Then its Hilbert series
$$\sum_{n\geq0}\ell(\widetilde{\mathfrak{m}^nM}/\widetilde{\mathfrak{m}^{n+1}M})z^n=\frac{\widetilde{h_M}(z)}{(1-z)^r}$$
Where $\widetilde{h_M}(z)\in \mathbb{Z}[z]$. Set $r_M(z)=\sum_{n\geq0}\ell(\widetilde{\mathfrak{m}^{n+1}M}/\mathfrak{m}^{n+1}M)z^n$; clearly, $r_M(z)$ is a polynomial with non-negative integer coefficients (because depth$M>0$). Now we have $$h_M(z)=\widetilde{h_M}(z)+(1-z)^{r+1}r_M(z);\ \text{where }\ r=\text{dim}M$$
We know that depth$G(M)>0$ if and only if $r_M(z)=0$.

\s (see \cite[2.1]{Pu2}) Let $x$ be an $M-$superficial element and depth$M\geq2$. Set $N=M/xM$, then we have a natural map $\rho^x:M\rt N$ and we say that Ratliff-Rush filtration on $M$ behaves well mod superficial element $x$ if $\rho^x(\widetilde{\mathfrak{m}^nM})=\widetilde{\mathfrak{m}^nN}$ for all $n\geq 1$. Now $\rho^x$ induces the maps $$\rho_n^x:\frac{\widetilde{\mathfrak{m}^nM}}{\mathfrak{m}^nM}\rt \frac{\widetilde{\mathfrak{m}^nN}}{\mathfrak{m}^nN}$$
It is easy to  show that Ratliff-Rush filtration behaves well mod $x$ if and only if $\rho_n^x$ is surjective for all $n\geq 1$.

\begin{definition}
	Let $(A,\mathfrak{m})$ be a Noetherian local ring and $M$ be a finite $A$-module with dim$M=d$. Then we say $G(M)$ is a generalized \CM \ module if 
	$$\ell(H^i_\mathcal{M}(G(M)))<\infty\ \text{ for } \ i=0,\ldots,d-1$$
	where, $H^i_\mathcal{M}(G(M))$ is the $i$-th local cohomology module of $G(M)$ with respect to the maximal homogeneous ideal $\mathcal{M}$. 
\end{definition}
\begin{remark}
	$G(A)$ is a finitely generated $k(=A/\mathfrak{m})$-algebra. A $G(A)$-module $E$ is generalized \CM \ if and only if $E_P$ is \CM\ for all prime ideals $P\ne \mathcal{M}$.
\end{remark}

\begin{proposition}\label{ASSG}
	Let $(A,\mathfrak{m})$ be a  \CM\ local ring of dimension $d\geq 1$ and $M$ a finite $A$-module with dim$M=d$. Now if  $\widetilde{G(M)}$ is a \CM \ $G(A)$-module, then 
	\begin{enumerate}
		\item $G(M)$ is a generalized \CM\ module.
		\item dim $G(A)/P = d$ for all minimal primes $P$ of $G(M)$.
	\end{enumerate}
	
\end{proposition}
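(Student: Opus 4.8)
The plan is to compare $G(M)$ with its Ratliff--Rush analogue $\widetilde{G(M)}$ and to exploit the fact that the two filtrations coincide in all large degrees. Since $\depth M>0$ we have $\widetilde{\mathfrak{m}^n M}=\mathfrak{m}^n M$ for $n\gg 0$, and for every $n$ the module $\widetilde{\mathfrak{m}^n M}/\mathfrak{m}^n M$ sits inside the finite-length module $M/\mathfrak{m}^n M$. Consequently the graded $G(A)$-module $\bigoplus_{n\ge 0}\widetilde{\mathfrak{m}^n M}/\mathfrak{m}^n M$ has finite length, hence is annihilated by a power of $\mathcal{M}$.

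First I would build the natural graded homomorphism $\Theta\colon G(M)\to\widetilde{G(M)}$ whose degree-$n$ component is induced by the inclusions $\mathfrak{m}^n M\subseteq\widetilde{\mathfrak{m}^n M}$ and $\mathfrak{m}^{n+1}M\subseteq\widetilde{\mathfrak{m}^{n+1}M}$. A short computation identifies $\ker\Theta$ and $\operatorname{coker}\Theta$ with subquotients of $\bigoplus_{n}\widetilde{\mathfrak{m}^n M}/\mathfrak{m}^n M$, so both have finite length. Localizing the four-term exact sequence $0\to\ker\Theta\to G(M)\xrightarrow{\Theta}\widetilde{G(M)}\to\operatorname{coker}\Theta\to 0$ at any prime $P\neq\mathcal{M}$ kills the two outer terms and yields an isomorphism $G(M)_P\cong\widetilde{G(M)}_P$.

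For part (1), observe that a \CM\ module stays \CM\ under localization; since $\widetilde{G(M)}$ is \CM, each $\widetilde{G(M)}_P$ is \CM, and therefore $G(M)_P\cong\widetilde{G(M)}_P$ is \CM\ for every $P\neq\mathcal{M}$. By the remark immediately preceding the statement, this is equivalent to $G(M)$ being generalized \CM. For part (2), the same isomorphisms give $\operatorname{Supp}(G(M))\setminus\{\mathcal{M}\}=\operatorname{Supp}(\widetilde{G(M)})\setminus\{\mathcal{M}\}$; as $\mathcal{M}$ lies in the support of both nonzero modules we get $\operatorname{Supp}(G(M))=\operatorname{Supp}(\widetilde{G(M)})$, and in particular the two modules share the same minimal primes. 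A \CM\ module is unmixed, so every associated (equivalently minimal) prime $P$ of $\widetilde{G(M)}$ satisfies $\dim G(A)/P=\dim\widetilde{G(M)}$; and equal supports give $\dim\widetilde{G(M)}=\dim G(M)=\dim M=d$. Hence every minimal prime $P$ of $G(M)$ has $\dim G(A)/P=d$. (Note $\mathcal{M}$ is not among them, since $\dim G(A)/\mathcal{M}=0<d$.)

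The step I expect to be the crux is the finite-length claim for $\ker\Theta$ and $\operatorname{coker}\Theta$: this is precisely where $\depth M>0$ and the stabilization $\widetilde{\mathfrak{m}^n M}=\mathfrak{m}^n M$ for $n\gg0$ are used, and it is what transports the everywhere-local Cohen--Macaulayness of $\widetilde{G(M)}$ to punctured-spectrum statements about $G(M)$. A secondary point to check is the unmixedness of the \CM\ module $\widetilde{G(M)}$, which is standard since $G(A)$ is a finitely generated graded algebra over the field $k=A/\mathfrak{m}$ and \CM\ modules over such rings have all associated primes of one and the same dimension.
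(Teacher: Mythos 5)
The paper states Proposition \ref{ASSG} in the preliminaries \emph{without proof} (it is then invoked once, in Section 5, for an MCM module), so there is no argument of the authors' to compare yours against; your proposal in effect supplies the missing proof, and its skeleton is sound and surely the intended one: the graded comparison map $\Theta\colon G(M)\to\widetilde{G(M)}$, the identification of $\ker\Theta$ and $\operatorname{coker}\Theta$ with subquotients of $\bigoplus_n \widetilde{\mathfrak{m}^nM}/\mathfrak{m}^nM$ (concentrated in finitely many degrees, hence of finite length and killed by a power of $\mathcal{M}$), the resulting isomorphisms $G(M)_P\cong\widetilde{G(M)}_P$ for $P\neq\mathcal{M}$, and, for part (2), equality of supports combined with unmixedness of the \CM\ module $\widetilde{G(M)}$ and $\dim G(M)=\dim M=d$. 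The graded facts you defer to as standard (localizations of a \CM\ module are \CM; a finitely generated graded \CM\ module over $G(A)$ is unmixed) are indeed standard.

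Two points should be made explicit. First, your crux genuinely requires $\depth M>0$: the stabilization $\widetilde{\mathfrak{m}^nM}=\mathfrak{m}^nM$ for $n\gg0$ (hence the finite length of $\ker\Theta$ and $\operatorname{coker}\Theta$, and also the finite generation of $\widetilde{G(M)}$ over $G(A)$) fails when $\depth M=0$, since then $H^0_{\mathfrak{m}}(M)\subseteq\widetilde{\mathfrak{m}^nM}$ for every $n$. The proposition as printed does not assume positive depth; it is implicit in the paper's setup of $\widetilde{G(M)}$ (see \ref{htilde}, where $\depth M>0$ is invoked) and holds in the sole application, but your proof should record it as a hypothesis. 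Second, the remark you cite for part (1) is too weak as literally stated: \CM ness of $E_P$ at every prime $P\neq\mathcal{M}$ does \emph{not} by itself imply that $E$ is generalized \CM\ --- for instance $E=G/(X)\oplus G/(Y,Z)$ over $G=k[X,Y,Z]$ is \CM\ at every $P\neq\mathcal{M}$, yet $H^1_{\mathcal{M}}(E)$ has infinite length. The correct criterion additionally demands equidimensionality, i.e. $\dim G(A)/P=\dim E$ for every minimal prime $P$, which is exactly your part (2) and which you prove independently of part (1). So the argument should be reordered: establish (2) first (support equality plus unmixedness of $\widetilde{G(M)}$), then conclude (1) from ``\CM\ on the punctured spectrum together with equidimensional implies generalized \CM.'' As written, part (1) leans on an imprecise statement of the paper rather than on what you have actually proved.
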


\begin{definition}\label{hypersurface}
	Let $(A,\mathfrak{m})$ be a Noetherian local ring, then $A$ is said to be a hypersurface ring if its completion can be written as a quotient of a regular local ring by a principal ideal.
\end{definition}
\s Let $(Q,\mathfrak{n})$ be a regular local ring, $f\in \mathfrak{n}^e\setminus\mathfrak{n}^{e+1}$ and $A=Q/(f)$. If $M $ is an MCM $A-$module then projdim$_Q(M)=1$ and $M$ has a minimal presentation: $$0\rt Q^{\mu(M)}\rt Q^{\mu(M)}\rt M \rt 0$$

\s \label{i(M)}Let $(Q,\mathfrak{n})$ be a regular local ring and $\phi : Q^t\rt Q^t$  a linear map,  set
$$i_\phi=\text{max}\{i |\  \text{all entries of}\ \phi \ \text{are in }\ \mathfrak{n}^i\}$$
If $M $ has minimal presentations: $0\rt Q^t\xrightarrow{\phi}Q^t\rt M\rt0$ and
$0\rt Q^t\xrightarrow{\phi'}Q^t\rt M\rt0$ then it is well known that $i_\phi=i_{\phi'}$ and det$\phi=u$det$\phi'$ where $u$ is a unit. We set $i(M)=i_\phi$ and det$M=$det$\phi$. For any non-zero element $a$ of $Q$ we set $v_Q(a)=max\{i|a\in \mathfrak{n}^i\}$. We are choosing this set-up from \cite{PuMCM}.

\begin{definition} \label{phi}
	(See \cite[Definition 4.4]{PuMCM}) Let $(Q,\mathfrak{n})$ be a regular local ring, $A=Q/(f)$ where $f\in \mathfrak{n}^e\setminus\mathfrak{n}^{e+1}, e\geq2$ and $M$  an MCM $A-$module with minimal presentation: $$0\rt Q^t\xrightarrow{\phi}Q^t\rt M\rt0$$
	Then an element $x$ of $\mathfrak{n}$ is said to be $\phi-$ superficial \index{$\phi-$ superficial}if we have
	\begin{enumerate}
		\item $x$ is $Q\oplus A\oplus M$ superficial.
		\item If $\phi=(\phi_{ij})$ then $v_Q(\phi_{ij})=v_{Q/xQ}(\overline{\phi_{ij}})$.
		\item $v_Q(det(\phi))=v_{Q/xQ}det(\overline{\phi})$
	\end{enumerate}
\end{definition}
\begin{remark}

	If $x$ is $Q\oplus A\oplus M\oplus (\oplus_{ij}Q/(\phi_{ij}))\oplus Q/(det(\phi))-$superficial then it is $\phi-$superficial. So if the residue field of $Q$ is infinite then $\phi-$superficial elements always exist.
\end{remark}
\begin{definition}
	(See \cite[Definition 4.5]{PuMCM}) Let $(Q,\mathfrak{n})$ be a regular local ring, $A=Q/(f)$ where $f\in \mathfrak{n}^e\setminus\mathfrak{n}^{e+1}, e\geq2$ and $M$  an MCM $A-$module with minimal presentation: $$0\rt Q^t\xrightarrow{\phi}Q^t\rt M\rt0.$$
	We say that $x_1,\ldots,x_c$ is a $\phi$-superficial sequence if $\overline{x_n}$ is $(\phi \otimes_Q Q/(x_1,\ldots,x_{n-1}))$-superficial for $n=1,\ldots,c$.
\end{definition}

\begin{remark}
	Assume residue field of $Q$ is infinite. If red$_{J_0}(M)\leq 2$ for one minimal reduction $J_0$ of $M$. Then for almost all minimal reduction $J$ of $M$, red$_J(M)\leq 2$. { \it We will use this fact implicitly.}
\end{remark}
\s \label{d=1} With above set-up we have   
\begin{enumerate}
	\item (see \cite[Lemma 4.7]{PuMCM})  If dim$M=1$  then
	$$h_M(z)=\mu(M)(1+z+\ldots+z^{i(M)-1})+\sum_{i\geq i(M)}h_i(M)z^i$$
	$$ \text{with}\ h_i(M)\geq0 \ \forall \ i.$$
	
	\item (see \cite[Theorem 2]{PuMCM}) $e(M)\geq \mu(M)i(M)$ and if $e(M)=\mu(M)i(M)$ then 
	
	$G(M)$ is Cohen-Macaulay and $h_M(z)=\mu(M)(1+z+\ldots+z^{i(M)-1})$.
	
\end{enumerate}

\s \label{exact seq}Let $(A,\mathfrak{m})$ be a \CM\ local ring  and $M$ a \CM\ $A$-module of dimension 2. Let $x,y$ be a maximal $M$-superficial sequence. 

Set $J=(x,y)$ and $\overline{M}=M/xM$ then we have exact sequence (for $M=A$ see \cite[Lemma 2.2]{rv})
\begin{align*}
0 \rt \mathfrak{m}^nM:J/\mathfrak{m}^{n-1}M\xrightarrow{f_1} \mathfrak{m}^nM:x/\mathfrak{m}^{n-1}M & \xrightarrow{f_2} \mathfrak{m}^{n+1}M:x/\mathfrak{m}^{n}M\\
\xrightarrow{f_3} \mathfrak{m}^{n+1}M/J\mathfrak{m}^nM
&\xrightarrow{f_4} \mathfrak{m}^{n+1}\overline{M}/y\mathfrak{m}^n\overline{M}\rt 0
\end{align*}
Here, $f_1$ is inclusion map, $f_2(a+\mathfrak{m}^{n-1}M)=ay+\mathfrak{m}^nM, f_3(b+\mathfrak{m}^nM)=bx+J\mathfrak{m}^nM$ and $f_4$ is reduction modulo $x$.

\s \label{exact d}Let $(A,\mathfrak{m})$ be a \CM\ local ring of dimension $d\geq 1$ and $M$ a maximal \CM\ $A$-module. Let $\underline{x}=x_1,\ldots,x_d$ be a maximal $M$-superficial sequence. Set $N=M/x_1M$, $J=(x_1,\ldots,x_d)$ and $\overline{J}$ is image of $J$ is $A/(x_1)$. Then we have  
$$0 \rt \mathfrak{m}^{2}M:x_1/\mathfrak{m}M
\xrightarrow{f} \mathfrak{m}^{2}M/J\mathfrak{m}M
\xrightarrow{g} \mathfrak{m}^{2}{N}/\overline{J}\mathfrak{m}{N}\rt 0.$$
Here, $f(a+\mathfrak{m}M)=ax_1+J\mathfrak{m}M$ and $g$ is reduction modulo $x_1$.

\s\label{exact1} Let $(A,\mathfrak{m})$ be a \CM\ local ring of dimension one and $M$ a maximal \CM\ $A$-module. Let $x$ be a superficial element of $M$. Set $N=M/xM$. Then we have $$0\rt \mathfrak{m}^2M:x/\mathfrak{m}^2M\xrightarrow{f} \mathfrak{m}^2M/x\mathfrak{m}^2M\xrightarrow{g}\mathfrak{m}^2N/0\rt 0.$$
Here, $f(a+\mathfrak{m}^2M)=ax+x\mathfrak{m}^2M$ and $g$ is reduction modulo $x$.

The following  result is well known, but we will use this many times. For the convenience of the reader we state it

\s \label{overline{G(M)}} Let $(Q,\mathfrak{n},k)$ be a regular local ring of dimension $d+1$ and\\ $(A,\mathfrak{m})=(Q/(f),\mathfrak{n}/(f))$ where $f\in \mathfrak{n}^i\setminus\mathfrak{n}^{i+1}$. Now if $M$ is a maximal \CM\ $A$-module with red$M\leq2$. Let $\underline{x}=x_1,\ldots,x_d$ be sufficiently general linear forms in $\mathfrak{n}/\mathfrak{n}^2$. Set $S=G_{\mathfrak{n}}(Q)$, $R=S/(\underline{x^*})S$ then $R\cong k[T]$ and $G(A)/(\underline{x})G(A)\cong R/(T^s)$ for some $s\geq2$.\\ Now consider $\overline{G(M)}=G(M)/(\underline{x^*})G(M)$. Then
$$\overline{G(M)}=M/\mathfrak{m}M \oplus \mathfrak{m}M/(\mathfrak{m}^2M+(\underline{x})M) \oplus \mathfrak{m}^2M/(\mathfrak{m}^3M+(\underline{x})\mathfrak{m}M)$$
Its Hilbert series is $\mu(M)+\alpha z+\beta z^2$ where $\beta\leq \alpha\leq \mu(M)$, because it is an $R$-module which is also $R/(T^s)$-module and it is generated in degree zero.

\s \label{RR-2} Let $(A,\mathfrak{m})$ be a \CM\ local ring of dimension $d$ and $M$ be a finite $A$-module with depth$M\geq 2$. Let $x$ be an $M$-superficial element. Set $N=M/xM$. Then for $n\geq 0$ we have exact sequence  (see \cite[2.2]{Pu2}) 
$$0\rt \frac{(\mathfrak{m}^{n+1}M:x)}{\mathfrak{m}^nM}\rt \frac{\widetilde{\mathfrak{m}^nM}}{\mathfrak{m}^nM}\rt \frac{\widetilde{\mathfrak{m}^{n+1}M}}{\mathfrak{m}^{n+1}M} \rt \frac{\widetilde{\mathfrak{m}^{n+1}N}}{\mathfrak{m}^{n+1}N}. $$

In particular, we have exact sequence
$$0\rt \widetilde{\mathfrak{m}M}/\mathfrak{m}M\rt \widetilde{\mathfrak{m}N}/\mathfrak{m}N.$$

If depth$M=1$, then for all $n\geq 0$ we have following exact sequence 
$$0\rt \frac{(\mathfrak{m}^{n+1}M:x)}{\mathfrak{m}^nM}\rt \frac{\widetilde{\mathfrak{m}^nM}}{\mathfrak{m}^nM}\rt \frac{\widetilde{\mathfrak{m}^{n+1}M}}{\mathfrak{m}^{n+1}M}$$

 The next result is a basic fact from linear algebra. But for the sake of completion we give a proof.
\begin{proposition}\label{vector} Let $V$ be a vector space of dimension $d\geq2 $ over an infinite field $k$. Let $V_1,\ldots V_n$ be finitely many proper subspaces of $V$. If dim$_k(V_i)\leq $ dim$_kV-2$, then there exists a subspace $H=ka\oplus kb$ where $a,b\in V$ such that $H\cap V_i=0$ for $i=0,\ldots, n$.
\end{proposition}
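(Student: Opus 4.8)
The plan is to build $H$ one vector at a time, the essential tool being the standard fact that a vector space over an infinite field is never the union of finitely many proper subspaces. First I would use this fact to select $a\in V\setminus\bigcup_{i=1}^n V_i$; since each $V_i$ is proper (indeed $\dim_k V_i\le \dim_k V-2<\dim_k V$) and $k$ is infinite, such an $a$ exists. Because each $V_i$ is a subspace and $a\notin V_i$, the line $ka$ already meets every $V_i$ only in $0$.

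Next I would extend $a$ to a suitable second vector. Consider the subspaces $W_i=V_i+ka$. This is exactly where the codimension hypothesis enters: since $\dim_k V_i\le \dim_k V-2$, we get $\dim_k W_i\le \dim_k V_i+1\le \dim_k V-1$, so each $W_i$ is again a proper subspace of $V$. Invoking the non-union fact a second time, I would choose $b\in V\setminus\bigcup_{i=1}^n W_i$. Note that $b\notin W_i\supseteq ka$ forces $a,b$ to be linearly independent, so $H:=ka\oplus kb$ is genuinely two-dimensional.

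Finally I would verify $H\cap V_i=0$ for each $i$. Suppose $\alpha a+\beta b\in V_i$. If $\beta\neq 0$ then $b\in V_i+ka=W_i$, contradicting the choice of $b$; hence $\beta=0$, and then $\alpha a\in V_i$ forces $\alpha=0$ since $a\notin V_i$. Thus the only element of $H$ lying in any $V_i$ is $0$, as required.

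The only genuinely non-routine ingredient is the non-union fact itself, so for completeness I would include its short proof: assuming $V=\bigcup_{i} V_i$ with the number of proper subspaces taken minimal, pick $x\in V_1\setminus\bigcup_{i\ge 2}V_i$ and $y\notin V_1$; the infinitely many vectors $y+tx$ with $t\in k$ all avoid $V_1$, so by pigeonhole two of them, say for $t\neq t'$, lie in a common $V_i$ with $i\ge 2$, whence $(t-t')x\in V_i$ gives $x\in V_i$, a contradiction. The main point to watch is that the codimension-$2$ assumption is used precisely to keep the enlarged subspaces $W_i=V_i+ka$ proper; with only $\dim_k V_i\le \dim_k V-1$ this second selection step would fail.
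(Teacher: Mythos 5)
Your proof is correct and follows essentially the same route as the paper: pick $a$ outside $\bigcup_i V_i$, enlarge each $V_i$ to $W_i=V_i+ka$ (still proper thanks to the codimension-$2$ hypothesis), and pick $b$ outside $\bigcup_i W_i$. You merely add details the paper leaves implicit, namely the explicit verification that $H\cap V_i=0$ and a proof of the fact that a vector space over an infinite field is not a finite union of proper subspaces.
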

\begin{proof}
	If dim$_kV=2$ then there is nothing to prove.\\
	Now suppose dim$_kV\geq 3$. Since $k$ is infinite, $V\ne \bigcup_{i=1}^nV_i$. So there is an element $a\in V\setminus \bigcup_{i=1}^nV_i$. Now consider $$W_i=ka\oplus V_i\ \ \text{for all}\ i=1,\ldots,n.$$
	Clearly, $W_i$ for $i=1,\ldots,n$ are finitely many proper subspaces of $V$.\\ We know that $V\ne \bigcup_{i=1}^nW_i $. So there is an element $b\in V\setminus \bigcup_{i=1}^nW_i $. Therefore, $H=ka\oplus kb$ is the required subspace.
\end{proof}
{\bf Convention}:
Let $M$ be a maximal Cohen-Macaulay  module of dimension $d$ and $\underline{x}=x_1,\ldots,x_d$ be a maximal $\phi$-superficial sequence, then

$M_0=M$ and $M_t=M/(x_1,\ldots,x_t)M$ for $t=1,\ldots,d$.  

{\it We are mainly interested in red$(M)=2$ case. However, it is more convenient to prove following results for red$(M)\leq 2$.}	
	\section{\bf The case when $\mu(M)=2$ }
	\begin{theorem}\label{muM=2}
		Let $(A,\mathfrak{m})$ be a hypersurface ring  of dimension $d$ with infinite residue field and  $M$ an MCM $A$-module with $\mu(M)=2$. Now if
		$red(M)\leq2$ then depth$G(M)\geq d-1$.
	\end{theorem}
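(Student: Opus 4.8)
The plan is first to make the standard flat base changes (completion, then residue-field extension), so that $A$ is complete with infinite residue field; then $A=Q/(f)$ for a regular local $(Q,\n)$ and $M=\operatorname{coker}\phi$ for a $2\times2$ matrix $\phi$ with $\det\phi=f$. Since $\depth G(M)\ge d-1$ holds vacuously for $d\le 1$, assume $d\ge 2$. Choosing a maximal $\phi$-superficial sequence $x_1,\dots,x_d$ and setting $N=M/(x_1,\dots,x_{d-2})M$, Sally-descent (\ref{Sally-des}) reduces the theorem to the single two-dimensional assertion $\depth G(N)\ge 1$, where $N$ is an MCM module with $\mu(N)=2$ and $\red(N)\le 2$ over the hypersurface $A/(x_1,\dots,x_{d-2})$. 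By \ref{Property}(8) together with Singh's equality (\ref{mod-sup}), this is equivalent to $b_{x,N}(z)=0$ for a superficial $x$, i.e. to $x^{*}$ being $G(N)$-regular (\ref{Property}(6)), i.e. to the Ratliff--Rush filtration of $N$ being $\m$-adic: $\wt{\m^{n}N}=\m^{n}N$ for all $n$.

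Next I would pin down the $h$-polynomial. Cutting once more, $\bar N=N/xN=M/(x_1,\dots,x_{d-1})M$ is one-dimensional, and the structure result \ref{d=1}(1) gives $h_{\bar N}(z)=2(1+z+\dots+z^{i-1})+\sum_{j\ge i}h_j z^{j}$ with $h_j\ge 0$ and $i=i(N)$. Since $\red(\bar N)\le 2$ forces $\deg h_{\bar N}\le 2$, only $i\in\{1,2,3\}$ are possible: $i=1$ gives $2+h_1z+h_2z^{2}$, $i=2$ gives $2+2z+h_2z^{2}$, and $i=3$ forces $h_{\bar N}(z)=2+2z+2z^{2}$, i.e. $e(\bar N)=6=\mu(N)\,i(N)$. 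In the last case \ref{d=1}(2) yields $G(\bar N)$ \CM, whence Sally-descent upgrades this all the way to $G(M)$ \CM. Thus the genuine content is confined to $i(N)\in\{1,2\}$.

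For $i(N)\in\{1,2\}$ the difficulty is that the one-dimensional slice $\bar N$ may itself satisfy $\depth G(\bar N)=0$, so one cannot descend; the positive depth must be produced in passing from dimension one to two, and this is exactly where $\mu(N)=2$ is indispensable. My approach is to control the occurring lengths through the two-dimensional exact sequence (for a superficial pair $x,y$ of $N$ with $J=(x,y)$) linking $\m^{n}N:J$, $\m^{n}N:x$, $\m^{n+1}N:x$, $\m^{n+1}N/J\m^{n}N$ and $\m^{n+1}\bar N/y\m^{n}\bar N$, combined with the fact that $\ov{G(N)}=G(N)/(x^{*},y^{*})G(N)$ is generated in degree $0$ with Hilbert series $2+\alpha z+\beta z^{2}$ and $\beta\le\alpha\le 2$. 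The smallness forced by $\mu(N)=2$ leaves only finitely many numerical possibilities for $(\alpha,\beta)$, and I would use the explicit matrix $\phi$ (after $\phi$-superficial reductions, which preserve $i(N)$ and the valuation of $\det\phi$) to describe $\m^{n}N$ in the low degrees and verify $\m^{n+1}N:x=\m^{n}N$ directly; equivalently, to run a descending induction on the Ratliff--Rush sequence $0\to(\m^{n+1}N:x)/\m^{n}N\to\wt{\m^{n}N}/\m^{n}N\to\wt{\m^{n+1}N}/\m^{n+1}N\to\wt{\m^{n+1}\bar N}/\m^{n+1}\bar N$ of \ref{RR-2}, using the matrix description to kill the left-hand term.

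The hard part is precisely this last step: ruling out depth-zero behaviour of the two-dimensional $G(N)$ when the one-dimensional slice already has depth-zero associated graded module. Everything preceding it---base change, Sally-descent, the classification of $h_{\bar N}$, and the disposal of $i(N)=3$---is formal, and the entire weight of the hypothesis $\mu(M)=2$ falls on this length computation; it is what fails to deliver $d-1$ once $\mu(M)$ grows, consistent with the weaker bounds $d-2$ and $d-3$ obtained for $\mu(M)=3,4$.
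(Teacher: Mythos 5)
Your setup is sound and matches the paper's skeleton — base change, reduction to dimension two via Sally-descent, the classification of $h_{\bar N}$ through \ref{d=1}(1) and $\operatorname{red}\leq 2$, and the disposal of $i(N)=3$ via $e=\mu i$ — but the proof stops exactly where the theorem begins. You yourself flag it: the entire content is to rule out $\depth G(N)=0$ for the two-dimensional module $N$ in the cases where the one-dimensional slice $\bar N$ has $\depth G(\bar N)=0$ (concretely, $h_{\bar N}=2+z^2$, which does occur, e.g.\ for $\phi=\begin{pmatrix} y^2 & 0\\ x & y\end{pmatrix}$ in the paper's Example section). For this step you offer only a plan: ``use the explicit matrix $\phi$ to describe $\mathfrak{m}^nN$ in low degrees and verify $\mathfrak{m}^{n+1}N:x=\mathfrak{m}^nN$ directly.'' That is not an argument, and it is not how this can be made to work: when $G(\bar N)$ has depth zero, the Ratliff--Rush correction is genuinely nonzero one step down, so there is no direct matrix computation that ``kills the left-hand term'' of the sequence in \ref{RR-2}; what is needed is a proof by contradiction, and your sketch never produces the contradiction.

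The paper's resolution of this case has two quantitative ingredients you never assemble. First, from Singh's equality one gets $e_2(M)=e_2(M_1)-\sum_i b_i(x_1,M)$ with $e_2(M_1)=1$ and $e_2(M)\geq 0$, so $\sum_i b_i(x_1,M)\leq 1$; combined with the exact sequence \ref{exact seq} (which shows $b_1=0$ forces all $b_i=0$), the assumption $\depth G(M)=0$ pins down $b_1(x_1,M)=1$ exactly. Second, the short exact sequence
$$0\rt \mathfrak{m}^2M:x_1/\mathfrak{m}M \rt \mathfrak{m}^2M/J\mathfrak{m}M\rt \mathfrak{m}^2{M_1}/x_2\mathfrak{m}{M_1}\rt 0$$
then gives $\ell(\mathfrak{m}^2M/J\mathfrak{m}M)=\rho_1+b_1=2$, so $\ov{G(M)}=G(M)/(x_1^*,x_2^*)G(M)$ has Hilbert series $2+z+2z^2$, contradicting the constraint $\beta\leq\alpha\leq\mu(M)$ of \ref{overline{G(M)}} — this is where $\mu(M)=2$ actually enters. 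You cite both the exact sequence and the $\beta\leq\alpha$ constraint as available tools, but you never run this computation, nor the analogous eliminations the paper needs in the case $e(M)=4$, $i(M)=1$ (ruling out $h_{\bar N}=2+2z$ and $2+2z^2$). A related structural omission: the paper's finite case list comes from the decomposition $M_d\cong Q'/(y^{a_1})\oplus Q'/(y^{a_2})$ over the DVR $Q'=Q/(\underline{x})$, which fixes the target $h$-polynomial $h_{M_d}$ in each case and makes ``$h_{\bar N}=h_{M_d}$'' a checkable equality via \ref{Property}(8); your classification by $i(N)$ alone leaves the target series undetermined in the $i\in\{1,2\}$ cases. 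As it stands the proposal is an accurate outline of the problem, not a proof of the theorem.
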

	\begin{proof}
		By \ref{Base change} we can assume that $A$ is a complete local ring.\\
		So, $(A,\mathfrak{m})=(Q/(f),\mathfrak{n}/(f))$, where
		$(Q,\mathfrak{n})$ is a regular local ring of dimension $d+1$ and $f\in \mathfrak{n}^i\setminus\mathfrak{n}^{i+1}$ for some $i\geq2$.\\
		Let dim$M\geq1$ and $0\rt Q^2\xrightarrow{\phi} Q^2\rt M\rt 0$ be a minimal presentation of $M$. Let $\underline{x}=x_1,\ldots,x_d$ be a maximal $\phi$-superficial sequence (see \ref{phi}). Set $M_d=M/\underline{x}M$ and $(Q',(y))=(Q/(\underline{x}),\mathfrak{n}/(\underline{x}))$.\\
		Clearly, $Q'$ is DVR and so $M_d\cong Q'/(y^{a_1})\oplus Q'/(y^{a_2})$. As $red(M)\leq2$, we can assume $1\leq a_1\leq a_2\leq 3$. We consider all possibilities separately 
		
		{\bf Case (1):} $a_1=a_2=1 .$ \\
		In this case  $M_d\cong Q'/(y)\oplus Q'/(y)$. This implies $h_{M_d}(z)=2$ and 
		$e(M_d)=\mu(M_d)=2$. For dim$M\geq 1$ we know that $e(M)=e(M_d)$ and $\mu(M)=\mu(M_d)$. So $e(M)=\mu(M)=2$ and this implies  $M$ is Ulrich module. Therefore $G(M)$ is \CM \ and $h_M(z)=2$ (see \cite[Theorem 2]{PuMCM}).
		
		{\bf Case (2):} $a_1=1,a_2=2 .$ \\
		In this case $M_d\cong Q'/(y)\oplus Q'/(y^2)$, so $h_{M_d}(z)=2+z$.\\
		We first consider the case when dim$M=2$ because if dim$M\leq1$ there is nothing to prove.\\ 
		Let $\underline{x}=x_1,x_2$ be a maximal $\phi$-superficial sequence. Set $J=(x_1,x_2)$ and ${M_1}=M/x_1M$.\\
		Since dim${M_1}=1$,  we can write $h$-polynomial of ${M_1}$ as
		$h_{{M_1}}(z)=2+(\rho_0-\rho_1)z+\rho_1z^2$, where  $\rho_n = \ell(\mathfrak{m}^{n+1}{M_1}/{x_2}\mathfrak{m}^n{M_1})$. Since $red(M)\leq2$, so $\rho_n=0$ for $n\geq2$. As we know that $e({M_1})=e(M_2)=3$ and $\mu({M_1})=2$ so we have $\rho_0=\ell(\mathfrak{m}{M_1}/x_2{M_1})=1$. We also know that coefficients of $h_{{M_1}}$ are non-negative [from \ref{d=1}(1)].\\ So possible values of  $\rho_1$ are $0$ and $1$.\\ 
		{\bf Subcase(i): $\rho_1=0.$}\\ Notice in this case $M_1$ has minimal multiplicity. This implies $h_{{M_1}}(z)=2+z$ and $G({M_1})$ is \CM. By Sally-descent  $G(M)$ is \CM \ and $h_M(z)=2+z$. \\
		{\bf Subcase(ii): $\rho_1=1.$ }\\
		In this case $h_{{M_1}}(z)=2+z^2$. As $h_{M_1}(z)\ne h_{M_2}(z)$, it follows that  depth$G({M_1})=0$ (see \ref{Property}).\\ As we know  [from \ref{mod-sup}] $$h_M(z)=h_{{M_1}}(z)-(1-z)^2b_{x_1,M}(z).$$
		This gives us $$e_2(M)=e_2({M_1})-\sum b_i(x_1,M)$$
		where $b_i(x_1,M)=\ell(\mathfrak{m}^{i+1}M:x_1/\mathfrak{m}^iM)$. We know that $e_2(M)$ and $\sum b_i(x_1,M) $  are non-negative integers and in this case $e_2({M_1})=1$. 
		So possible values for $\sum b_i(x_1,M) $ are $0,1$. (Also note that $b_0(x_1,M)=0$).\\
		{\bf Claim:} depth$G(M)=1$.\\
		{\bf Proof of the claim:} If possible assume that depth$G(M)=0$ then   from \ref{Property} we have $\sum b_i(x_1,M)\ne 0 $. This implies $\sum b_i(x_1,M)=1 $
		
		From the exact sequence (\ref{exact seq})
		\begin{align*}
		0 \rt \mathfrak{m}^nM:J/\mathfrak{m}^{n-1}M\rt \mathfrak{m}^nM:x_1/\mathfrak{m}^{n-1}M & \rt \mathfrak{m}^{n+1}M:x_1/\mathfrak{m}^{n}M\\
		\rt \mathfrak{m}^{n+1}M/J\mathfrak{m}^nM
		&\rt \mathfrak{m}^{n+1}{M_1}/x_2\mathfrak{m}^n{M_1}\rt 0
		\end{align*}
		
		we have  if $b_1(x_1,M)=\ell(\mathfrak{m}^2M:x_1/\mathfrak{m}M)=0$ then $b_i(x_1,M)=0$ for all $i\geq2$. \\
		We have assumed that depth$G(M)=0$ so $b_1(x_1,M)=1$. 
		
		From the above exact sequence we have $$0\rt \mathfrak{m}^2M:x_1/\mathfrak{m}M \rt \mathfrak{m}^2M/J\mathfrak{m}M\rt \mathfrak{m}^2{M_1}/x_2\mathfrak{m}{M_1}\rt 0.$$
		So  we get $\ell(\mathfrak{m}^2M/J\mathfrak{m}M)=\rho_1+b_1(x_1,M)=2$.

		Now consider $\overline{G(M)}=G(M)/(x^*_1,x^*_2)G(M)$. So we have
		$$G(M)/(x^*_1,x^*_2)G(M)=M/\mathfrak{m}M \oplus \mathfrak{m}M/(JM+\mathfrak{m}^2M)\oplus \mathfrak{m}^2M/J\mathfrak{m}M+\mathfrak{m}^3M$$
		Since deg $h_{M_2}(z)=1$,  $\mathfrak{m}^2M\sub JM$  and  $\mathfrak{m}^3M\sub J\mathfrak{m}M$.\\
		Therefore we have
		$$\overline{G(M)}=G(M)/(x_1^*,x_2^*)G(M)=M/\mathfrak{m}M \oplus \mathfrak{m}M/JM \oplus \mathfrak{m}^2M/J\mathfrak{m}M.$$
		Its Hilbert series is $2+z+2z^2$. But this 
		is not a possible Hilbert series [from \ref{overline{G(M)}}]. So depth$G(M)\geq1$, but in this case depth$G(M)\ne 2$ because $h_{M_1}(z)\ne h_{M_2}(z)$. This gives us depth$G(M)=1$.\\ 
		Now assume dim$M\geq3$ and $\underline{x}=x_1,\ldots,x_d$ a maximal $\phi$-superficial sequence. Set $M_{d-2}=M/(x_1,\ldots,x_{d-2})M$. We now have two cases.\\ First case when $M_{d-2}$ has minimal multiplicity with $h_{M_{d-2}}=2+z$. By Sally-descent in this case we have $G(M)$ is \CM \ and $h_M(z)=2+z$.\\
		Second case when depth$G(M_{d-2})=1$ and $h_{M_{d-2}}=2+z^2$. By Sally-descent we have depth$G(M)=d-1$ and $h_M(z)=2+z^2$.

		{\bf Case(3):}$a_1=1,a_2=3.$\\
		In this case $M_d\cong Q'/(y)\oplus Q'/(y^3)$. So $h_{M_d}(z)=2+z+z^2$.\\
		We  first consider the case when dim$M=2$ because if dim$M\leq1$ there is nothing to prove.\\
		Let $\underline{x}=x_1,x_2$ be a maximal $\phi$-superficial sequence. Set $J=(x_1,x_2)$ and ${M_1}=M/x_1M$.\\
		Since dim${M_1}=1$ we can write $h$-polynomial of ${M_1}$ as $h_{{M_1}}(z)=2+(\rho_0-\rho_1)z+\rho_1z^2$, where $\rho_n = \ell(\mathfrak{m}^{n+1}{M_1}/{x_2}\mathfrak{m}^n{M_1})$. Since $red(M)\leq2$,  $\rho_n=0$ for $n\geq2$. As we know that $e({M_1})=e(M_2)=4$ and $\mu({M_1})=2$ so we have $\rho_0=\ell(\mathfrak{m}{M_1}/x_2{M_1})=2$. We also know that coefficients of $h_{{M_1}}$ are non-negative [from \ref{d=1}(1)]. So possible values of $\rho_1$ are $0,1$ and $2$.\\
		We now first show that $\rho_1=0$ or $\rho_1=2$ is not possible.\\
		Because if $\rho_1=0$ then ${M_1}$ has minimal multiplicity. This implies  $G({M_1})$ is \CM \ and $h_{M_1}(z)=2+2z$. But this   is not possible  because we have  $h_{M_1}(z)\ne h_{M_2}(z) $ (see \ref{Property}).  So $\rho_1\neq 0.$\\
		If $\rho_1=2$ then consider $\overline{G(M_1)}=G({M_1})/x_2^*G({M_1})$. 
		So we have 
		$$G({M_1})/x_2^*G({M_1})={M_1}/\mathfrak{m}{M_1}\oplus \mathfrak{m}{M_1}/(x_2{M_1}+\mathfrak{m}^2{M_1})\oplus \mathfrak{m}^2{M_1}/x_2\mathfrak{m}{M_1}$$
		Since deg$h_{M_2}(z)=2$, $\mathfrak{m}^2{M_1}\not\subseteq x_2{M_1}$. Therefore the  Hilbert series of $\overline{G(M_1)}$   is $2+z+2z^2$ and this is not a possible Hilbert series [from \ref{overline{G(M)}}]. So $\rho_1\neq2$.\\
		We now consider the $\rho_1=1$ case and 
		in this case $h_{{M_1}}(z)=2+z+z^2$. This implies $G({M_1})$ is \CM \ (see \ref{mod-sup}) and by Sally-descent $G(M)$ is \CM.
		
			Now assume dim$M\geq3$ and $\underline{x}=x_1,\ldots,x_d$ a maximal $\phi$-superficial sequence. Set $M_{d-2}=M/(x_1,\ldots,x_{d-2})M$.
			So we have $G(M_{d-2})$ is \CM.
		 By Sally-descent $G(M)$ is \CM\ and $h_M(z)=2+z+z^2$.

		{\bf Case(4):} $a_1=2,a_2=2.$\\
		In this case  $M_d\cong Q'/(y^2)\oplus Q'/(y^2)$. So $h_{M_d}(z)=2+2z$. This implies $e(M_d)=4=i(M_d)\mu(M_d)$.   Notice this equality is preserved modulo any $\phi$-superficial sequence, So for dim$M\geq1$,  $G(M)$ is \CM \ and $h_M(z)=2+2z$ (see \cite[Theorem 2]{PuMCM}). 
		
		{\bf Case(5):} $a_1=2,a_2=3.$\\
		In this case $M_d\cong Q'/(y^2)\oplus Q'/(y^3)$. So  $h_{M_d}(z)=2+2z+z^2$. \\
		We  first consider the  case when dim$M=2$ because if dim$M\leq1$ there is nothing to prove.\\
		Let $\underline{x}=x_1,x_2$ be a maximal $\phi$-superficial sequence. Set $J=(x_1,x_2)$ and ${M_1}=M/x_1M$.\\
		Since dim${M_1}=1$ we can write $h$-polynomial of ${M_1}$ as $h_{{M_1}}(z)=2+(\rho_0-\rho_1)z+\rho_1z^2$, where $\rho_n = \ell(\mathfrak{m}^{n+1}{M_1}/{x_2}\mathfrak{m}^n{M_1})$. Since $red(M)\leq2$,  $\rho_n=0$ for $n\geq2$. As we know that $e({M_1})=e(M_2)=5$ and $\mu({M_1})=2$ so we have $\rho_0=\ell(\mathfrak{m}{M_1}/x_2{M_1})=3$. We also know that coefficients of $h_{{M_1}}$ are non-negative [from \ref{d=1}(1)]. \\ 
		Since  $i(M)=i({M_1})=i(M_2)=2$ (see \ref{i(M)}).  This implies $im(\phi\otimes Q/(x_1))\sub\mathfrak{m}^2(Q/(x_1)\oplus Q/(x_1))$. So we get  $\ell(\mathfrak{m}{M_1}/{\mathfrak{m}^2{M_1}})=4$, because $M_1=$coker$(\phi\otimes Q/(x_1))$. From the Hilbert series of $M_1$ we get $\ell(\mathfrak{m}{M_1}/{\mathfrak{m}^2{M_1}})=2+(\rho_0-\rho_1)=4$. This implies that  $\rho_1=1$ and $h_{{M_1}}(z)=2+2z+z^2$. So $G({M_1})$ is \CM. Now  by Sally-descent $G(M)$ is \CM. 
		
			Now assume dim$M\geq3$ and $\underline{x}=x_1,\ldots,x_d$ a maximal $\phi$-superficial sequence. Set $M_{d-2}=M/(x_1,\ldots,x_{d-2})M$. So we have $G(M_{d-2})$ is \CM.
			By Sally-descent $G(M)$ is \CM \ and $h_M(z)=2+2z+z^2$.
		
		{\bf Case(6):} $a_1=3,a_2=3$\\
		In this case $M_d\cong Q'/(y^3)\oplus Q'/(y^3)$, so  $h_{M_d}(z)=2+2z+2z^2$. This  implies $e(M_d)=i(M_d)\mu(M_d)=6$. For dim$M\geq1$ we have $e(M)=\mu(M)i(M)$,  because this equality is preserved modulo any $\phi$-superficial sequence. So  we have $G(M)$ is \CM \ and $h_M(z)=2+2z+2z^2$ (see \cite[Theorem 2]{PuMCM}). 
	\end{proof}
	So from above theorem it is clear that:
	\begin{enumerate}
		\item If $a_1=1 ,a_2=1 $ then $G(M)$ is \CM\ and $h_M(z)=2$.

		\item If $a_1=1 ,a_2=2 $ then we have two case:
		\begin{enumerate}
			\item $G(M)$ is \CM \ if and only if $h_M(z)=2+z$
			\item  depth$G(M)=d-1$ if and only if $h_M(z)=2+z^2$
		\end{enumerate}

		\item If $a_1=1 ,a_2= 3$ then
		$G(M)$ is \CM\  and $h_M(z)=2+z+z^2$.

		\item If $a_1=2 ,a_2=2 $ then $G(M)$ is \CM \ and $h_M(z)=2+2z.$

		\item If $a_1=2 ,a_2=3 $ then
		$G(M)$ is \CM \ and $h_M(z)=2+2z+z^2$

		\item If $a_1=3 ,a_2= 3$ then $G(M)$ is \CM \ and $h_M(z)=2+2z+2z^2.$
	\end{enumerate}
	
	{\bf Note:} red$(M)<2$ occurs only in cases (1), (2a) and (4).
	
	We can conclude that:
	\begin{corollary}
		Let $(A,\mathfrak{m})$ be a hypersurface ring  of dimension $d\geq 2$ with infinite residue field and  $M$ an MCM $A$-module with $\mu(M)=2$. Now if
		$red(M)=2$ then depth$G(M)\geq d-1$. Also, possible $h$-polynomials are
		\[ h_M(z) = \begin{cases} 
		2+z^2 & depth G(M) = d-1. \\
		2+z+z^2 & G(M)\ \text{is Cohen-Macaulay}. \\
		2+2z+z^2 &  G(M)\ \text{is Cohen-Macaulay}. \\
		2+2z+2z^2 &  G(M)\ \text{is Cohen-Macaulay}.
		
		\end{cases}
		\]
		
	\end{corollary}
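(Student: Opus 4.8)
The plan is to deduce the corollary directly from Theorem~\ref{muM=2} together with the six-case table recorded after its proof. That theorem already supplies the bound $\depth G(M)\geq d-1$, so the whole task reduces to isolating, among the $h$-polynomials $2$, $2+z$, $2+z^2$, $2+z+z^2$, $2+2z$, $2+2z+z^2$, $2+2z+2z^2$ produced in cases (1)--(6), precisely those that arise when $\red(M)=2$ rather than $\red(M)<2$, and then transcribing the accompanying statement about $\depth G(M)$.

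The key step is to characterize $\red(M)=2$ through the $h$-polynomial. First I would reduce modulo a maximal $\phi$-superficial sequence $x_1,\ldots,x_{d-1}$ to the one-dimensional module $M_{d-1}$ and write, as in the proof of the theorem, $h_{M_{d-1}}(z)=2+(\rho_0-\rho_1)z+\rho_1 z^2$ with $\rho_n=\ell(\mathfrak{m}^{n+1}M_{d-1}/x_d\mathfrak{m}^n M_{d-1})$. Since $\red(M)\leq 2$ forces $\rho_n=0$ for $n\geq 2$, the equality $\red(M)=2$ is equivalent to $\rho_1\neq 0$, i.e.\ to $\deg h_{M_{d-1}}(z)=2$. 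Because $d\geq 2$ gives $\depth G(M)\geq d-1\geq 1$, Sally-descent \ref{Sally-des} yields $\depth G(M_j)\geq 1$ for $j=0,\ldots,d-2$, so iterating \ref{Property}(8) produces $h_M(z)=h_{M_{d-1}}(z)$. Hence $\red(M)=2$ if and only if $\deg h_M(z)=2$.

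With this criterion in hand I would scan cases (1)--(6). Cases (1), (2a) and (4) yield $h_M(z)=2$, $2+z$, $2+2z$, all of degree at most one, so $\red(M)<2$ and these are discarded, matching the Note after the theorem. The remaining cases (2b), (3), (5), (6) yield exactly $h_M(z)\in\{2+z^2,\,2+z+z^2,\,2+2z+z^2,\,2+2z+2z^2\}$, each of degree two; the proof of Theorem~\ref{muM=2} has already recorded that case (2b) gives $\depth G(M)=d-1$ with $h_M(z)=2+z^2$, while cases (3), (5) and (6) give $G(M)$ Cohen-Macaulay. Collecting these four lines is exactly the displayed case table.

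The point I expect to require the most care is the reduction-number bookkeeping in case (2b). There the full Artinian reduction $M_d$ has $h_{M_d}(z)=2+z$ of degree one, which might suggest $\red(M)=1$; the correct value is $\red(M)=2$, and the drop in degree from $h_{M_{d-1}}(z)=2+z^2$ to $h_{M_d}(z)=2+z$ reflects only the failure of $x_d^*$ to be $G(M_{d-1})$-regular (equivalently $\depth G(M)=d-1<d$), not a genuine decrease of the reduction number. Thus $\red(M)$ must be read off from the one-dimensional reduction $M_{d-1}$ and not from $M_d$; once this is observed, the rest of the argument is pure collection of data already computed inside the proof of Theorem~\ref{muM=2}.
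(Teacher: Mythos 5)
Your proposal is correct and takes essentially the same route as the paper: the corollary is obtained by reading off the six-case analysis in Theorem \ref{muM=2} and retaining exactly the cases (2b), (3), (5), (6) in which $\operatorname{red}(M)=2$. The degree criterion you supply ($\operatorname{red}(M)=2$ iff $\deg h_M(z)=2$, via $h_M(z)=h_{M_{d-1}}(z)$ and the $\rho_1$ bookkeeping) is just an explicit justification of the Note, stated without proof in the paper, that $\operatorname{red}(M)<2$ occurs only in cases (1), (2a) and (4) --- including the correct observation that the reduction number must be read from the one-dimensional reduction $M_{d-1}$ rather than from the Artinian reduction $M_d$.
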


\section{\bf The case when $\mu(M)=3$ }

\begin{theorem}\label{muM=3}
	Let $(A,\mathfrak{m})$ be a hypersurface ring  of dimension $d$ with infinite residue field and  $M$ an MCM $A$-module with $\mu(M)=3$.
	Now if  $red(M)\leq2$, then depth$G(M)\geq d-2$.
\end{theorem}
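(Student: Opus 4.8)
The plan is to run the reduction-to-dimension-zero argument of Theorem \ref{muM=2} for three generators. By \ref{Base change} I may assume $A$ is complete with infinite residue field, so $(A,\m)=(Q/(f),\n/(f))$ with $(Q,\n)$ regular local of dimension $d+1$ and $f\in\n^i\setminus\n^{i+1}$, $i\geq 2$; since the bound is vacuous for $d\leq 2$, I assume $d\geq 3$. Fix a minimal presentation $0\rt Q^3\xrightarrow{\phi}Q^3\rt M\rt 0$ and a maximal $\phi$-superficial sequence $\underline{x}=x_1,\ldots,x_d$ (see \ref{phi}). Writing $(Q',(y))=(Q/(\underline{x}),\n/(\underline{x}))$, a DVR, the reduction $M_d=M/\underline{x}M$ is a torsion $Q'$-module, hence $M_d\cong Q'/(y^{a_1})\oplus Q'/(y^{a_2})\oplus Q'/(y^{a_3})$, and $\red(M)\leq 2$ lets me order $1\leq a_1\leq a_2\leq a_3\leq 3$. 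Since $i(M)=i_\phi$ is preserved modulo $\underline{x}$ (see \ref{i(M)}) and $\phi$ becomes diagonal over $Q'$, I have $i(M)=a_1$ and $e(M)=e(M_d)=a_1+a_2+a_3$. This leaves exactly ten configurations, to be treated one by one as in Theorem \ref{muM=2}.

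First I would dispose of the routine configurations. When $a_1=a_2=a_3$, i.e. $(1,1,1),(2,2,2),(3,3,3)$, one has $e(M)=3a_1=\mu(M)i(M)$, so $G(M)$ is \CM\ by \ref{d=1}(2) and $\depth G(M)=d\geq d-2$. When $e(M)=\mu(M)i(M)+1$, which occurs exactly for $(1,1,2)$ and $(2,2,3)$, Theorem \ref{5} applies and already gives $\depth G(M)\geq d-1\geq d-2$ together with the precise $h$-polynomial. Thus only the five configurations $(1,1,3),(1,2,2),(1,2,3),(1,3,3),(2,3,3)$, all with $e(M)\geq\mu(M)i(M)+2$, require genuine work.

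For these I would argue exactly as in the subcases of Theorem \ref{muM=2}. By Sally-descent (\ref{Sally-des}) the bound reduces to the base case $\depth G(M_{d-3})\geq 1$ for the dimension-three reduction, which by \ref{Property}(8) is the equality $h_{M_{d-3}}(z)=h_{M_{d-2}}(z)$. To control the Hilbert functions of the successive reductions I would use \ref{d=1}(1): the dimension-one reduction satisfies $h_{M_{d-1}}(z)=3(1+z+\cdots+z^{i(M)-1})+\sum_{j\geq i(M)}h_jz^j$ with all $h_j\geq 0$ and $h_{M_{d-1}}(1)=e(M)$, which together with $i(M)=a_1$ and $\red(M)\leq 2$ pins down the admissible shapes in each configuration. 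The decisive device, just as for $\mu(M)=2$, is the constraint of \ref{overline{G(M)}}: the module $\overline{G(M)}=G(M)/(\underline{x^*})G(M)$ has Hilbert series $3+\alpha z+\beta z^2$ with $\beta\leq\alpha\leq 3$. Any scenario in which the Hilbert function dropped at one of the first $d-2$ reduction steps would, through the exact sequences \ref{exact seq} and \ref{exact d} and the resulting lengths of $\m^{n+1}M:x_1/\m^nM$, force $\beta>\alpha$; excluding this gives $\depth G(M)\geq d-2$, while the remaining freedom in the top coefficient of $h_M(z)$ records whether the depth is $d$, $d-1$ or $d-2$.

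The main obstacle is this base-case bookkeeping. For $\mu(M)=2$ the dimension-one Hilbert polynomial carried a single free parameter, so each case collapsed to one or two subcases; for $\mu(M)=3$ there are more degrees of freedom and more ways for a reduction to lower the Hilbert function, so tracking the numbers $b_n(x_1,M)=\ell(\m^{n+1}M:x_1/\m^nM)$ alongside the lengths $\alpha,\beta$ is considerably heavier. The genuinely delicate point is to show that in the most spread-out configurations $(1,2,3)$ and $(1,3,3)$ the Hilbert function drops at most twice (defect exactly $\mu(M)-1=2$) rather than three times; this is precisely where the inequality $\beta\leq\alpha\leq 3$ of \ref{overline{G(M)}} must be exploited to its full strength, and I expect it to demand a careful configuration-by-configuration argument rather than a single uniform one.
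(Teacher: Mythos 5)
Your frame — completion, the ten configurations $1\leq a_1\leq a_2\leq a_3\leq 3$, Sally-descent to a low-dimensional base case — matches the paper, and your shortcuts are sound: the configurations $(1,1,1),(2,2,2),(3,3,3)$ follow from $e(M)=\mu(M)i(M)$, and invoking Theorem \ref{5} for $(1,1,2)$ and $(2,2,3)$ is legitimate (its proof in Section 7 uses only Singh's equality and \cite[Theorem 4.2.1]{Rossi}, not this theorem) and is even a small improvement on the paper, which does those two cases by hand. The cases $(1,1,3)$, $(1,2,3)$, $(1,3,3)$ and $(2,3,3)$ also go through essentially as you describe, by eliminating values of $\rho_1$ with the exact sequences \ref{exact seq}, \ref{exact d} and the Hilbert-series constraint of \ref{overline{G(M)}} (for $(2,3,3)$ the paper uses the simpler observation that $i(M)=2$ forces $\ell(\m M_{d-1}/\m^2M_{d-1})=6$, hence $\rho_1=2$ and $G$ is \CM).

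The genuine gap is the configuration $(1,2,2)$ — the paper's Case (4) — which is exactly the case where $\depth G(M)=d-2$ really occurs, with $h_M(z)=3+3z^2-z^3$, and there your ``decisive device'' fails twice. First, with $\rho_0=2$ the value $\rho_1=2$ cannot be excluded by the constraint $\beta\leq\alpha\leq 3$: since $\m^2M_{d-1}\sub x_dM_{d-1}$ here, the Hilbert series of $\overline{G(M_{d-1})}$ would be $3+2z+2z^2$, which is admissible. The paper rules it out by a Ratliff--Rush argument (its Claims 1 and 2): $\widetilde{\m^iM_{d-1}}=\m^iM_{d-1}$ for $i\geq 2$, and $\ell(\widetilde{\m M_{d-1}}/\m M_{d-1})\leq 1$ because $\m$ becomes principal modulo the superficial sequence; this gives $\rho_1=b_1(x_d,M_{d-1})\leq 1$. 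Second, and more seriously: in the worst subcase of $(1,2,2)$ both the dimension-one and the dimension-two reductions have associated graded module of depth zero, i.e. the Hilbert function drops at two successive steps, and yet $\overline{G(M)}$ of the dimension-three reduction has the perfectly admissible Hilbert series $3+2z+2z^2$. So your claim that any drop in the first $d-2$ steps forces $\beta>\alpha$ is false, and Sally-descent cannot be started from dimension one or two. To get $\depth G\geq 1$ in dimension three the paper computes $\delta=\sum_n\ell\bigl((\m^{n+1}M\cap JM)/J\m^nM\bigr)=2$, where $J$ is generated by the superficial sequence, and invokes the complete-intersection-approximation bound $\depth G(M)\geq d-\delta$ of \cite[Theorem 5.1]{apprx}; this external theorem (or a substitute for it) is indispensable and nothing in your toolkit supplies it. Relatedly, your guess that the delicate cases are $(1,2,3)$ and $(1,3,3)$ is misplaced: those end with $\depth G(M)\geq d-1$ via the $\overline{G(M)}$ argument, and it is $(1,2,2)$ alone that carries the full defect $2$.
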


\begin{proof} By \ref{Base change} we can assume that $A$ is a complete local ring.\\
	So, $(A,\mathfrak{m})=(Q/(f),\mathfrak{n}/(f))$, where
	$(Q,\mathfrak{n})$ is a regular local ring of dimension $d+1$ and $f\in \mathfrak{n}^i\setminus\mathfrak{n}^{i+1}$ for some $i\geq2$.\\
	Let dim $M\geq1$ and $0\rt Q^3\xrightarrow{\phi} Q^3\rt M\rt 0$ be a minimal presentation of $M$. Let $\underline{x}=x_1,\ldots,x_d$ be a maximal $\phi$-superficial sequence (see \ref{phi}). Set $M_d=M/\underline{x}M$ and $(Q',(y))=(Q/(\underline{x}),\mathfrak{n}/(\underline{x}))$.
	
	Clearly, $Q'$ is DVR and so $M_d\cong Q'/(y^{a_1})\oplus Q'/(y^{a_2})\oplus Q'/(y^{a_3})$. As $red(M)\leq2$ we can assume that $1\leq a_1\leq a_2\leq a_3\leq 3$. Now we  consider all possibilities separately

	{\bf Case(1):} $a_1=1,a_2=1, a_3=1.$\\
	In this case $M_d\cong Q'/(y)\oplus Q'/(y)\oplus Q'/(y)$. This gives $h_{M_d}(z)=3$. So
	$e(M_d)=\mu(M_d)=3$.  For  dim$M\geq1$ we know that $e(M)=e(M_d)$ and $\mu(M)=\mu(M_d)$. Also, notice that $i(M)=i(M_d)=1$. So we have $e(M)=\mu(M)=3$ this implies that $M$ is an  Ulrich module.  So $G(M)$ is \CM \ and $h_M(z)=3$.  (see \cite[Theorem 2]{PuMCM}).
	
	{\bf Case(2):} $a_1=1,a_2=1, a_3=2.$\\
	In this case $M_d\cong Q'/(y)\oplus Q'/(y)\oplus Q'/(y^2)$. So we have $h_{M_d}(z)=3+z$.\\
	We  first consider the case when dim$M=3$ because if dim$M\leq2$ there is nothing to prove.\\
	Let $\underline{x}=x_1,x_2,x_3$ be a maximal $\phi$-superficial sequence. Set $M_1=M/x_1M$, $M_2=M/(x_1,x_2)M$ and $J=(x_2,x_3)$.
	
	Since dim$M_2=1$ then we can write $h$-polynomial of $M_2$ as $h_{M_2}(z)=3+(\rho_0-\rho_1)z+\rho_1z^2$ where $\rho_n=\ell(\mathfrak{m}^{n+1}M_2/{x_3\mathfrak{m}^nM_2})$.  Since red$(M)\leq 2$, we have $\rho_n=0$ for all $n\geq 2$. We have $\rho_0=\ell(\mathfrak{m}M_2/x_3M_2)=1$ and we know that coefficients of $h_{M_2}$ are non-negative [from \ref{d=1}(1)]. So possible values of  $\rho_1$ are $0$ and $1$.\\
	{\bf Subcase(i): $\rho_1=0$.}\\
	Notice that in this case $M_2$ has minimal multiplicity and this implies $G(M_2)$ is \CM. Now by Sally-descent $G(M)$ is Cohen-Macaulay and $h_M(z)=3+z$. \\
	{\bf Subcase(ii): $\rho_1=1$.} \\ In this case $h_{M_2}(z)=3+z^2$. This implies depth$G(M_2)=0$ because $h_{M_2}(z)\ne h_{M_3}(z) $ (see \ref{Property}). \\	
	Now since dim$M_1=2$, from \ref{mod-sup}  we have $$h_{M_1}(z)=h_{{M_2}}(z)-(1-z)^2b_{x_2,M_1}(z)$$
	This gives us $$e_2(M_1)=e_2({M_2})-\sum b_i(x_2,M_1)$$
	where $b_i(x_2,M_1)=\ell(\mathfrak{m}^{i+1}M_1:x_2/\mathfrak{m}^iM_1)$.\\
	We know that $e_2(M_1)$  and $\sum b_i(x_2,M_1) $ are non-negative integers. Also in this case $e_2({M_2})=1$. This implies $\sum b_i(x_2,M_1) \leq 1 $.  \\
	Now from exact sequence (see \ref{exact seq}) 
	\begin{align*}
	0 \rt \mathfrak{m}^nM_1:J/\mathfrak{m}^{n-1}M_1\rt \mathfrak{m}^nM_1:x_2/\mathfrak{m}^{n-1}M_1 & \rt \mathfrak{m}^{n+1}M_1:x_2/\mathfrak{m}^{n}M_1\\
	\rt \mathfrak{m}^{n+1}M_1/J\mathfrak{m}^nM_1
	&\rt \mathfrak{m}^{n+1}{M_2}/x_3\mathfrak{m}^n{M_2}\rt 0
	\end{align*}
	we have, if $b_1(x_2,M_1)=0$ then $b_i(x_2,M_1)=0$ for all $i\geq2$.\\
	{\bf Claim:} depth$G(M_1)=1$.\\
	{\bf Proof of the claim:}	Now if possible assume that depth$G(M_1)=0$ then from \ref{Property},  $\sum b_i(x_2,M_1) =1$.
	So in this case we have $b_1(x_2,M_1)=1$.\\
	From the above exact sequence we have  
	$$0\rt \mathfrak{m}^2M_1:x_2/\mathfrak{m}M_1\rt\mathfrak{m}^2M_1/J\mathfrak{m}M_1\rt\mathfrak{m}^2{M_2}/x_3\mathfrak{m}{M_2}\rt0$$
	So, $\lambda(\mathfrak{m}^2M_1/J\mathfrak{m}M_1)=\rho_1+b_1(x_2,M_1)=2$.

	Now consider $\overline{G(M_1)}=G(M_1)/(x_2^*,x_3^*)G(M_1)$. So we have
	$$G(M_1)/(x_2^*,x_3^*)G(M_1)=M_1/\mathfrak{m}M_1 \oplus \mathfrak{m}M_1/(JM_1+\mathfrak{m}^2M_1)\oplus \mathfrak{m}^2M_1/(J\mathfrak{m}M_1+\mathfrak{m}^3M_1)$$
	Now since deg$h_{M_3}(z)=1$,  $\mathfrak{m}^2M_1\subseteq JM_1$ and $\mathfrak{m}^3M_1=J\mathfrak{m}^2M_1\sub J\mathfrak{m}M_1$ . \\
	Thus we have
	$$\overline{G(M_1)}=G(M_1)/(x_2^*,x_3^*)G(M_1)=M_1/\mathfrak{m}M_1 \oplus \mathfrak{m}M_1/JM_1 \oplus \mathfrak{m}^2M_1/J\mathfrak{m}M_1$$
	Now since  $\ell(\mathfrak{m}M_1/JM_1)=1$,  the Hilbert series of $\overline{G(M_1)}$  is $3+z+2z^2$. But  this is not a possible Hilbert series [from \ref{overline{G(M)}}]. This implies depth$G(M_1)\geq 1$.
	
	So we can conclude from here that when $\rho_1=1$ then depth$G(M_1)=1$. Notice that depth$G(M_1)\ne 2$ because depth$G(M_2)=0$. By Sally-descent in this case we have  depth$G(M)= 2$ and $h_M(z)=3+z^2$.
	
	Now assume  dim$M\geq4$. Let $\underline{x}=x_1\ldots,x_d$ be a maximal $\phi$-superficial sequence. Set $M_{d-3}=M/(x_1,\ldots,x_{d-3})M$. We now have two cases.\\
	First case when $G(M_{d-3})$ is \CM\ and $h_{M_{d-3}}(z)=3+z$. By Sally-descent we have $G(M)$ is \CM\ and $h_{M}(z)=3+z$.\\
	Second case when depth$G(M_{d-3})=2$ and $h_{M_{d-3}}(z)=3+z^2$. By Sally-descent we have depth$G(M)=d-1$ and  $h_{M}(z)=3+z^2$.
	
{\bf Case(3):} $a_1=1,a_2=1, a_3=3.$\\
	In this case $M_d\cong Q'/(y) \oplus Q'/(y) \oplus Q'/(y^3)$ this implies that $h_{M_d}(z)=3+z+z^2$.\\
	We  first consider the case when dim$M=3$  because if dim$M\leq2$ there is nothing to prove.\\
	Let $\underline{x}=x_1,x_2,x_3$ be a $\phi$-superficial sequence.\\ Set $M_1=M/x_1M$, $M_2=M/(x_1,x_2)M$ and $J=(x_2,x_3)$.\\
	Since dim$M_2=1$ we can write $h$-polynomial of $M_2$ as $h_{M_2}(z)=3+(\rho_0-\rho_1)z+\rho_1z^2$ where $\rho_n=\ell(\mathfrak{m}^{n+1}M_2/{x_3\mathfrak{m}^nM_2})$. Now since red$(M)\leq2$, $\rho_n=0$ for all $n\geq 2$. We know that $e(M_2)=e(M_3)=5$ therefore $\rho_0=\ell(\mathfrak{m}M_2/x_3M_2)=2$ and coefficients of $h_{M_2}$ are non-negative [from \ref{d=1}(1)]. So possible values of $\rho_1$ are $0,1$ and $2$.\\
	We first show that $\rho_1=0$ or $\rho_1=2$ is not possible.
	
	If $\rho_1=0$ then $M_2$ has minimal multiplicity. This implies $G(M_2)$ is \CM. But  this  is a contradiction, because $h_{M_3}(z)\neq h_{M_2}$ (see \ref{Property}). So $\rho_1\ne 0$.\\
	If $\rho_1=2$ then consider $G(M_2)/x_3^*G(M_2)$.  So we have
	$$G(M_2)/x_3^*G(M_2)=M_2/\mathfrak{m}M_2\oplus \mathfrak{m}M_2/(x_3M_2+\mathfrak{m}^2M_2)\oplus \mathfrak{m}^2M_2/x_3\mathfrak{m}M_2$$
	Now since $\mathfrak{m}^2M_2\not\subseteq x_3M_2$,  its Hilbert series is $3+z+2z^2$. But  this is not a possible Hilbert series [from \ref{overline{G(M)}}]. So in this case we get a contradiction. This implies $\rho_1\ne 2$.
	
	Now we consider the case when  $\rho_1=1$. In this case 
	$h_{M_2}(z)=3+z+z^2$. This implies $G(M_2)$ is \CM, because $h_{M_2}(z)=h_{M_3}(z)$ (see \ref{mod-sup}). Now by Sally-descent $G(M)$ is \CM. 
	
	Now assume  dim$M\geq4$. Let $\underline{x}=x_1\ldots,x_d$ be a maximal $\phi$-superficial sequence. Set $M_{d-3}=M/(x_1,\ldots,x_{d-3})M$. So we have 
	$G(M_{d-3})$ is \CM\ and $h_{M_{d-3}}(z)=3+z+z^2$. By Sally-descent we have $G(M)$ is \CM\ and $h_{M}(z)=3+z+z^2$.

	{\bf Case(4):} $a_1=1,a_2=2, a_3=2.$\\
	In this case  $M_d\cong Q'/(y)\oplus Q'/(y^2) \oplus Q'/(y^2)$. So $h_{M_d}(z)=3+2z.$\\
	We  first consider the case when dim$M=3$ because if dim$M\leq2$ there is nothing to prove.\\
	Let $\underline{x}=x_1,x_2,x_3$ be a maximal $\phi$-superficial sequence. Set $M_1=M/x_1M$, $M_2=M/(x_1,x_2)M$ and $J=(x_2,x_3)$ .\\
	We  first prove two claims:\\
	{\bf Claim(1):} $\widetilde{\mathfrak{m}^iM_2}=\mathfrak{m}^iM_2$ for all $i\geq2$.\\
	{\bf Proof} Since $\mathfrak{m}^{i+1}M_2=x_3\mathfrak{m}^iM_2$ for all $i\geq2$ so $\mathfrak{m}^{i+1}M_2:x_3=\mathfrak{m}^iM_2$ for all $i\geq2$.
	
	So from \ref{RR-2} for all $i\geq2$  we have
	$$0\rt \widetilde{\mathfrak{m}^iM_2}/\mathfrak{m}^iM_2\rt \widetilde{\mathfrak{m}^{i+1}M_2}/\mathfrak{m}^{i+1}M_2.$$ We also know that for $i\ggg 0$, $\widetilde{\mathfrak{m}^iM_2}=\mathfrak{m}^iM_2$. So it is clear that $\widetilde{\mathfrak{m}^iM_2}=\mathfrak{m}^iM_2$ for all $i\geq2$.\\
	{\bf Claim(2):} $\ell(\widetilde{\mathfrak{m}M_2}/\mathfrak{m}M_2)\leq1$.\\
	{\bf Proof:} Now since $\mu(M_2)=3$ so $\ell(\widetilde{\mathfrak{m}M_2}/\mathfrak{m}M_2)\leq3$. If $\ell(\widetilde{\mathfrak{m}M_2}/\mathfrak{m}M_2)=3$ then $M_2=\widetilde{\mathfrak{m}M_2}$ this implies that $\mathfrak{m}M_2=\mathfrak{m}\widetilde{\mathfrak{m}M_2}\sub\widetilde{\mathfrak{m}^2M_2}=\mathfrak{m}^2M_2$. So $\mathfrak{m}M_2=0$ and this is a contradiction. Now if possible assume that $\ell(\widetilde{\mathfrak{m}M_2}/\mathfrak{m}M_2)=2$  then $M_2=\langle m,l_1,l_2\rangle $ with $l_1,l_2\in \widetilde{\mathfrak{m}M_2}\setminus\mathfrak{m}M_2$.  Now we have  $l_i\mathfrak{m}\sub \widetilde{\mathfrak{m}^2M_2}=\mathfrak{m}^2M_2 $ for $i=1,2.$ If we set $\mathfrak{m}'=\mathfrak{m}/(x_1,x_2,x_3)$, then $\mathfrak{m}'$ is principal ideal. We also know that $\ell(\mathfrak{m}M_3)=\ell(\mathfrak{m}'M_3)$ and $\mathfrak{m}^2M_3=(\mathfrak{m}')^2M_3=0 $, this implies that $\ell(\mathfrak{m}M_3)=1$. But we have $\ell(\mathfrak{m}M_3)=2$ (from the Hilbert series of $M_3$). So $\ell(\widetilde{\mathfrak{m}M_2}/\mathfrak{m}M_2)\leq1$. 
	
	Now from \ref{RR-2} we have 
	$$0\rt (\mathfrak{m}^2M_2:x_3)/\mathfrak{m}M_2\rt \widetilde{\mathfrak{m}M_2}/\mathfrak{m}M_2 .$$
	So from claim(2), we have $b_1(x_3,M_2)=\ell(\mathfrak{m}^2M_2:x_3/\mathfrak{m}M_2)\leq 1$.\\
	Since dim$M_2=1$ we can write $h$-polynomial of $M_2$ as $h_{M_2}(z)=3+(\rho_0-\rho_1)z+\rho_1z^2$ where $\rho_n=\ell(\mathfrak{m}^{n+1}M_2/{x_3\mathfrak{m}^nM_2})$. We have  $\rho_0=\ell(\mathfrak{m}M_2/x_3M_2)=2$ and coefficients of $h_{M_2}$ are non-negative [from \ref{d=1}(1)].
	
	From short exact sequence (see \ref{exact d})
	$$0\rt \mathfrak{m}^2M_2:x_3/\mathfrak{m}M_2\rt \mathfrak{m}^2M_2/x_3\mathfrak{m}M_2\rt \mathfrak{m}^2M_3/0\rt 0$$
	we have $\rho_1=b_1(x_3,M_2)$ because $\mathfrak{m}^2M_3=0$.\\ From Claim(2) we have $b_1(x_3,M_2)\leq1$.\\ 
	Now we have two cases.\\
	{\bf Subcase (i):} When  $b_1(x_3,M_2)=0$.\\
	This implies 
	$\rho_1=0$ and so in this case $M_2$ has minimal multiplicity. Therefore $G(M_2)$ is \CM.  By Sally-descent $G(M)$ is \CM \ and $h_M(z)=3+2z$.\\   
	{\bf Subcase (ii):} When $b_1(x_3,M_2)\neq0$.\\ From Claim(2) we have $\rho_1=b_1(x_3,M_2)=1$. So in this case $h_{M_2}(z)=3+z+z^2$. This implies $h_{M_2}(z)\ne h_{M_3}(z)$. So depth$G(M_2)=0$ (see \ref{Property}). 
	
	Since dim$M_1=2$, from \ref{mod-sup}  we have $$h_{M_1}(z)=h_{{M_2}}(z)-(1-z)^2b_{x_2,M_1}(z)$$
	This gives us $$e_2(M_1)=e_2({M_2})-\sum b_i(x_2,M_1),$$
	where $\sum b_i(x_2,M_1)=\ell(\mathfrak{m}^{i+1}M_1:x_2/\mathfrak{m}^iM)$.\\
	We know that $e_2(M_1)$  and $\sum b_i(x_2,M_1) $ are non-negative integers. Also in this case $e_2({M_2})=1.$ So we have $\sum b_i(x_2,M_1)\leq 1$. 
	
	Since red$(M)\leq2$, from exact sequence (see \ref{exact seq}) 
	\begin{align*}
	0 \rt \mathfrak{m}^nM_1:J/\mathfrak{m}^{n-1}M_1\rt \mathfrak{m}^nM_1:x_2/\mathfrak{m}^{n-1}M_1 & \rt \mathfrak{m}^{n+1}M_1:x_2/\mathfrak{m}^{n}M_1\\
	\rt \mathfrak{m}^{n+1}M_1/J\mathfrak{m}^nM_1
	&\rt \mathfrak{m}^{n+1}{M_2}/x_3\mathfrak{m}^n{M_2}\rt 0
	\end{align*}
	we get, if $b_1(x_2,M_1)=0$ then  $b_i(x_2,M_1)=0$ for all $i\geq2$.
	
	{\bf Subcase (ii).(a):} When $\sum b_i(x_2,M_1) =0$.\\
	Now from \ref{Property},  depth$G(M_1)\geq1$. Notice that $G(M_1)$ cannot be a \CM \ module, because depth$G(M_2)=0$. So in this case depth$G(M_1)=1$. By Sally-descent depth$G(M)=2 $  
	and $h_M(z)=3+z+z^2$. 
	
	{\bf Subcase (ii).(b):} When $\sum b_i(x_2,M_1) \ne 0$.\\
	This implies $b_1(x_2,M_1)=\ell((\mathfrak{m}^2M_1:x_2)/\mathfrak{m}M_1)=1$.
	So in this case we have depth$G(M_1)=0$ (see \ref{Property}).

	From the above exact sequence we get 
	$$0\rt \mathfrak{m}^2M_1:x_2/\mathfrak{m}M_1\rt \mathfrak{m}^2M_1/J\mathfrak{m}M_1\rt \mathfrak{m}^2M_2/x_3\mathfrak{m}M_2\rt 0.$$
	
	So we have $\ell(\mathfrak{m}^2M_1/J\mathfrak{m}M_1)=\rho_1+\ell((\mathfrak{m}^2M_1:x_2)/\mathfrak{m}M_1)=2$. We can write $h$-polynomial of $M_1$ as  $h_{M_1}(z)=h_{M_2}(z)-(1-z)^2z=3+3z^2-z^3$ (see \ref{mod-sup}). 
	
	Consider $\overline{G(M)}=G(M)/(x_1^*,x_2^*,x_3^*)G(M)$. So we have
	$$\overline{G(M)}=G(M)/(x_1^*,x_2^*,x_3^*)G(M)=M/\mathfrak{m}M \oplus \mathfrak{m}M/(\underline{x})M \oplus \mathfrak{m}^2M/(\underline{x})\mathfrak{m}M$$
	Since $\ell(\mathfrak{m}M/(\underline{x})M)=2$ by looking at the Hilbert series of $\overline{G(M)}$ we can say that $\ell(\mathfrak{m}^2M/(\underline{x})\mathfrak{m}M)\leq2$ [from \ref{overline{G(M)}}].
	
	From exact sequence (see \ref{exact d}) $$0\rt (\mathfrak{m}^2M:x_1)/\mathfrak{m}M\rt \mathfrak{m}^2M/(\underline{x})\mathfrak{m}M\rt \mathfrak{m}^2M_1/J\mathfrak{m}M_1\rt 0$$
	we have $\ell(\mathfrak{m}^2M/(\underline{x})\mathfrak{m}M)=2$ and $\mathfrak{m}^2M:x_1=\mathfrak{m}M$ because $\ell(\mathfrak{m}^2M_1/J\mathfrak{m}M_1)=2$.
	
	Now consider $$\delta=\sum\ell(\mathfrak{m}^{n+1}M\cap (\underline{x})M/(\underline{x})\mathfrak{m}^nM).$$
	
	We know that if $\delta\leq2$ then depth$G(M)\geq d-\delta$ (see \cite[Theorem 5.1]{apprx}). In our case $\delta=2$, because $\mathfrak{m}^2M\sub (\underline{x})M$ and $\ell(\mathfrak{m}^2M/(\underline{x})\mathfrak{m}M)=2$. So depth$G(M)\geq 1$. Notice that in this case depth$G(M)=1$ because depth$G(M_1)=0$.\\ Now assume dim$M\geq4$ and $\underline{x}=x_1,\ldots,x_d$ a maximal $\phi$-superficial sequence. Set $M_{d-3}=M/(x_1,\ldots,x_{d-3})M$. We now have three cases.\\
	First case when $G(M_{d-3})$ is \CM\ and $h_{M_{d-3}}=3+2z$. By Sally-descent $G(M)$ is \CM\ and $h_M(z)=3+2z$.\\
	Second case when depth$G(M_{d-3})=2$. By Sally-descent depth$G(M)=d-1$ and $h_M(z)=3+z+z^2$.\\
	Third case when depth$G(M_{d-3})=1$. By Sally-descent depth$G(M)=d-2$ and $h_M(z)=3+3z^2-z^3$.

	{\bf Case(5):} $a_1=1,a_2=2, a_3=3.$\\
	In this case $M_d\cong Q'/(y) \oplus Q'/(y^2) \oplus Q'/(y^3)$ this implies that $h_{M_d}(z)=3+2z+z^2$.\\
	We  first consider the case when  dim$M=3$ because if dim$M\leq2$ there is nothing to prove.\\
	Let $\underline{x}=x_1,x_2,x_3$ be a maximal $\phi$-superficial sequence. Set $M_1=M/x_1M$, $M_2=M/(x_1,x_2)M$ and $J=(x_2,x_3)$.\\ 
	Since dim$M_2=1$ we can write $h$-polynomial of $M_2$ as $h_{M_2}(z)=3+(\rho_0-\rho_1)z+\rho_1z^2$ where $\rho_n=\ell(\mathfrak{m}^{n+1}M_2/{x_3\mathfrak{m}^nM_2})$. Since red$(M)\leq 2$, we get $\rho_n=0$ for all $n\geq 2$.\\ Since  $\rho_0=\ell(\mathfrak{m}M_2/x_3M_2)=3$ and coefficients of $h_{M_2}$ are non-negative[from \ref{d=1}(1)], so possible values of $\rho_1$ are $0,1,2$ and $3$.
	
	We first show that $\rho_1=0$ or $\rho_1=3$ is not possible.
	
	If $\rho_1=0$ then  $M_2$ has minimal multiplicity so $G(M_2)$ is \CM\ which is a contradiction because in this case  $h_{M_3}(z)\ne h_{M_2}(z)$ (see \ref{Property}). So $\rho_1\ne 0$.\\
	If $\rho_1=3$  consider $\overline{G(M_2)}=G(M_2)/x_3^*G(M_2)$.
	So we have
	$$G(M_2)/x_3^*G(M_2)=M_2/\mathfrak{m}M_2\oplus \mathfrak{m}M_2/x_3M_2+\mathfrak{m}^2M_2\oplus \mathfrak{m}^2M_2/x_3\mathfrak{m}M_2.$$
	Since deg$h_{M_3}(z)=2$, $\mathfrak{m}^2M_2\not\subseteq x_3M_2$. So its Hilbert series is $3+\alpha z+3z^2$ where $\alpha \leq 2$, but this is not a possible Hilbert series [from \ref{overline{G(M)}}]. So $\rho_1\neq 3$.\\
	So  possible values of $\rho_1$ are $1$ and $2$.\\
	{\bf Subcase(i): $\rho_1=1$.} \\
	In this case $h_{M_2}(z)=3+2z+z^2$. So $h_{M_2}(z)=h_{M_3}(z)$ and this implies $G(M_2)$ is \CM\ (see \ref{Property}). By Sally-descent $G(M)$ is \CM.\\
	{\bf Subcase(ii): $\rho_1=2$.}\\
	In this case $h_{M_2}(z)=3+z+2z^2$ and depth$G(M_2)=0$ because $h_{M_2}(z)\ne h_{M_3}(z)$ (see \ref{Property}).
	
	Since dim$M_1=2$,  we have (see \ref{Property})  $$e_2(M_1)=e_2({M_2})-\sum b_i(x_2,M_1),$$
	where $b_i(x_2,M_1)=\ell(\mathfrak{m}^{i+1}M_1:x_2/\mathfrak{m}^iM) $. 
	We know that $e_2(M_1)$ and $\sum b_i(x_2,M_1) $ are non-negative integers and   $e_2({M_2})=2$. This implies $\sum b_i(x_2,M_1) \leq 2$.
	
	Since red$(M)\leq2$, from exact sequence (see \ref{exact seq}) 
	\begin{align*}
	0 \rt \mathfrak{m}^nM_1:J/\mathfrak{m}^{n-1}M_1\rt \mathfrak{m}^nM_1:x_2/\mathfrak{m}^{n-1}M_1 & \rt \mathfrak{m}^{n+1}M_1:x_2/\mathfrak{m}^{n}M_1\\
	\rt \mathfrak{m}^{n+1}M_1/J\mathfrak{m}^nM_1
	&\rt \mathfrak{m}^{n+1}{M_2}/x_3\mathfrak{m}^n{M_2}\rt 0
	\end{align*}
	we get, if $b_1(x_2,M_1)=0$ then  $b_i(x_2,M_1)=0$ for all $i\geq2$.\\
	{\bf Claim:} depth$G(M_1)=1$.\\
	{\bf Proof of the claim:}
	If possible assume that depth$G(M_1)=0$. This implies   $ b_1(x_2,M_1) \geq 1$ (see \ref{Property}).

	Consider $\overline{G(M_1)}=G(M_1)/(x_2^*,x_3^*)G(M_1)$, so we have
	$$G(M_1)/(x_2^*,x_3^*)G(M_1)=M_1/\mathfrak{m}M_1 \oplus \mathfrak{m}M_1/(JM_1+\mathfrak{m}^2M_1)\oplus \mathfrak{m}^2M_1/(J\mathfrak{m}M_1+\mathfrak{m}^3M_1)$$
	Therefore we have 	$$\overline{G(M_1)}=M_1/\mathfrak{m}M_1 \oplus \mathfrak{m}M_1/(JM_1+\mathfrak{m}^2M_1) \oplus \mathfrak{m}^2M_1/J\mathfrak{m}M_1.$$
	From the above  exact sequence we get
	$$0\rt \mathfrak{m}^2M_1:x_2/\mathfrak{m}M_1\rt \mathfrak{m}^2M_1/J\mathfrak{m}M_1\rt \mathfrak{m}^2M_2/x_3\mathfrak{m}M_2\rt 0.$$
	So we have $\ell(\mathfrak{m}^2M_1/J\mathfrak{m}M_1)=\rho_1+b_1(x_2,M_1)$.

	So Hilbert series of $\overline{G(M_1)}$ is $3+\alpha z+(\rho_1+b_1(x_2,M_1))z^2$ where   $\rho_1+b_1(x_2,M_1)\geq3$ and $\alpha\leq 2$,  because we know that
	$\ell({\mathfrak{m}M_1}/{JM_1})=3$ and $\mathfrak{m}^2M_1\not\sub JM_1$. So this is not a possible Hilbert series [from \ref{overline{G(M)}}]. So  this implies depth$G(M_1)\geq 1$. Notice that $G(M_1)$ cannot be a \CM\ module, because depth$G(M_2)=0$. So depth$G(M_1)=1$ By Sally-descent depth$G(M)=2$.
	
	Now assume dim$M\geq4$ and $\underline{x}=x_1,\ldots,x_d$ a maximal $\phi$-superficial sequence. Set $M_{d-3}=M/(x_1,\ldots,x_{d-3})M$. We now have two cases.\\
	First case when $G(M_{d-3})$ is \CM. By Sally-descent $G(M)$ is \CM\ and $h_M(z)=3+2z+z^2$.\\
	Second case when depth$G(M_{d-3})=2$. By Sally-descent depth$G(M)=d-1$ and $h_M(z)=3+z+2z^2$.
	
	{\bf Case(6):} $a_1=a_2=a_3=2$.\\
	In this case we have  $M_d\cong Q'/(y^2)\oplus Q'/(y^2)\oplus Q'/(y^2)$ and $h_{M_d}(z)=3+3z$. So we have $e(M_d)=i(M_d)\mu(M_d)=6$. For dim$M\geq1$ we know  $e(M)=i(M)\mu(M)=6$, because it is preserved modulo any $\phi$-superficial sequence.   This implies $G(M)$  is \CM\ and $h_M(z)=3+3z$  (see \cite[Theorem 2]{PuMCM}).
	
	{\bf Case(7):} $a_1=1,a_2=3,a_3=3.$\\
	In this case  $M_d\cong Q'/(y)\oplus Q'/(y^3) \oplus Q'/(y^3)$ and $h_{M_d}(z)=3+2z+2z^2$.\\
	We  first consider the case when dim$M=3$ because if dim$M\leq2$ there is nothing to prove.\\
	Let $\underline{x}=x_1,x_2,x_3$ be a maximal $\phi$-superficial sequence. Set $M_1=M/x_1M$, $M_2=M/(x_1,x_2)M$ and $J=(x_2,x_3)$.\\ 
	Since dim$M_2=1$ we can write $h$-polynomial of $M_2$ as $h_{M_2}(z)=3+(\rho_0-\rho_1)z+\rho_1z^2$ where $\rho_n=\ell(\mathfrak{m}^{n+1}M_2/{x_3\mathfrak{m}^nM_2})$. Since  $\rho_0=\ell(\mathfrak{m}M_2/x_3M_2)=4$ and coefficients of $h_{M_2}$ are non-negative [from \ref{d=1}(1)]. So possible values of $\rho_1$ are $0,1,2,3$ and $4$.
	
	We first show that $\rho_1=0$, $\rho_1=1$ or $\rho_1=4$ is not possible.\\
	If $\rho_1=0$ then $M_2$ has minimal multiplicity with $h_{M_2}(z)=3+4z$. So $G(M_2)$ is \CM. But this  is a contradiction because $h_{M_3}(z)\ne h_{M_2}(z) $ (see \ref{Property}). So $\rho_1\ne0$.\\
	If $\rho_1=1$ then $h_{M_2}(z)=3+3z+z^2$. This implies  depth$G(M_2)=0$, because $h_{M_2}(z)\ne h_{M_3}(z)$ (see \ref{Property}). So we have 
	$$\ell({\mathfrak{m}^2M_2}/{x_3\mathfrak{m}^2M_2})=\ell({\mathfrak{m}^2M_2}/{x_3\mathfrak{m}M_2})+\ell(\mathfrak{m}M_2/\mathfrak{m}^2M_2)=1+6=7$$
	and $\ell({\mathfrak{m}^2M_3})=2$.
	
	From   short exact sequence (see \ref{exact1})$$0\rt {\mathfrak{m}^2M_2: x_3}/{\mathfrak{m}^2M_2}\rt {\mathfrak{m}^2M_2}/{x_3\mathfrak{m}^2M_2}\rt {{\mathfrak{m}^2M_3}}/{0}\rt 0$$ we get $\ell({\mathfrak{m}^2M_2: x_3}/{\mathfrak{m}^2M_2})=5$ but this  is a contradiction because $ \ell(\mathfrak{m}^2M_2:x_3/\mathfrak{m}^2M_2)\geq \ell(\mathfrak{m}M_2/\mathfrak{m}^2M_2)$ and $\ell(\mathfrak{m}M_2/\mathfrak{m}^2M_2)=6$. So $\rho_1\ne1$. \\
	If $\rho_1=4$, consider $$G(M_2)/x_3^*G(M_2)=M_2/\mathfrak{m}M_2\oplus \mathfrak{m}M_2/(x_3M_2+\mathfrak{m}^2M_2)\oplus \mathfrak{m}^2M_2/x_3\mathfrak{m}M_2$$
	Its Hilbert series is $3+\alpha z+4z^2$ where $\alpha\leq3$, because $\mathfrak{m}^2M_2\not\subseteq x_3M_2$ and $\rho_0=\ell(\mathfrak{m}M_2/x_3M_2)=4$.  So this is not a possible Hilbert series [from \ref{overline{G(M)}}]. Therefore, $\rho_1\neq4$.
	
	Now the possible values of $\rho_1$ are $2$ and $3$.
	
	{\bf Subcase(i): $\rho_1=2$. }\\
	In this case we have  $h_{M_2}(z)=3+2z+2z^2$ and so $G(M_2)$ is \CM. By Sally-descent $G(M)$ is \CM.
	
	{\bf Subcase(ii): $\rho_1=3$. }\\
	In this case we have $h_{M_2}(z)=3+z+3z^2$. So  depth$G(M_2)=0$, because  $h_{M_2}(z)\ne h_{M_3}(z)$ (see \ref{Property}).\\
	Since  dim$M_1=2$, we have ( from \ref{Property}) $$e_2(M_1)=e_2({M_2})-\sum b_i(x_2,M_1),$$
	where $b_i(x_2,M_1)=\ell(\mathfrak{m}^{i+1}M_1:x_2/\mathfrak{m}^iM_1)$.
	We know that $e_2(M_1)$ and $\sum b_i(x_2,M_1) $ are non-negative integers and in this case $e_2({M_2})=3$. This implies $\sum b_i(x_2,M_1) \leq3$.
	
	Since red$(M)\leq2$, from exact sequence (\ref{exact seq})
	\begin{align*}
	0 \rt \mathfrak{m}^nM_1:J/\mathfrak{m}^{n-1}M_1\rt \mathfrak{m}^nM_1:x_2/\mathfrak{m}^{n-1}M_1 & \rt \mathfrak{m}^{n+1}M_1:x_2/\mathfrak{m}^{n}M_1\\
	\rt \mathfrak{m}^{n+1}M_1/J\mathfrak{m}^nM_1
	&\rt \mathfrak{m}^{n+1}{M_2}/x_3\mathfrak{m}^n{M_2}\rt 0
	\end{align*}
	we get, if $b_1(x_2,M_1)=0$ then  $b_i(x_2,M_1)=0$ for all $i\geq2$.\\
	{\bf Claim:} depth$G(M_1)=1$.\\
	If possible assume that depth$G(M_1)=0$. This implies $ b_1(x_2,M_1) \geq 1$ (\ref{Property}).
	
	Consider $\overline{G(M_1)}=G(M_1)/(x_2^*,x_3^*)G(M_1)$, then we have
	$$G(M_1)/(x_2^*,x_3^*)G(M_1)=M_1/\mathfrak{m}M_1 \oplus \mathfrak{m}M_1/JM_1+\mathfrak{m}^2M_1\oplus \mathfrak{m}^2M_1/J\mathfrak{m}M_1+\mathfrak{m}^3M_1.$$
	Since red$M\leq2 $,  $\mathfrak{m}^3M_1\sub J\mathfrak{m}M_1$. So we have	$$\overline{G(M_1)}=M_1/\mathfrak{m}M_1 \oplus \mathfrak{m}M_1/JM_1+\mathfrak{m}^2M_1 \oplus \mathfrak{m}^2M_1/J\mathfrak{m}M_1$$
	Now from the above exact sequence we get
	$$0\rt \mathfrak{m}^2M_1:x_2/\mathfrak{m}M_1\rt \mathfrak{m}^2M_1/J\mathfrak{m}M_1\rt \mathfrak{m}^2M_2/x_3\mathfrak{m}M_2\rt 0.$$
	So we have $\ell(\mathfrak{m}^2M_1/J\mathfrak{m}M_1)=\rho_1+b_1(x_2,M_1)$.\\
	Hilbert series of $\overline{G(M_1)}$ is $3+\alpha z+(\rho_1+b_1(x_2,M_1))z^2$ where   $\rho_1+b_1(x_2,M_1)\geq4$ and $\alpha\leq 3$,  because $\ell(\mathfrak{m}M_1/JM_1)=4$ and $\mathfrak{m}^2M_1\not\sub JM_1$.   From [\ref{overline{G(M)}}] we know that this is not a possible Hilbert series.\\ So we have    depth$G(M_1)\geq 1$. Notice that $G(M_1)$ cannot be a \CM\ module, because in this case depth$G(M_2)=0$. This implies  depth$G(M_1)= 1$. Now  by Sally-descent  depth$G(M)=2$ and $h_M(z)=3+z+3z^2$.\\
	Now assume dim$M\geq4$ and $\underline{x}=x_1,\ldots,x_d$ a maximal $\phi$-superficial sequence. Set $M_{d-3}=M/(x_1,\ldots,x_{d-3})M$. We now have two cases.\\
	First case when $G(M_{d-3})$ is \CM. By Sally-descent $G(M)$ is \CM\ and $h_M(z)=3+2z+2z^2$.\\
	Second case when depth $G(M_{d-3})=2$. By Sally-descent depth$G(M)=d-1$ and $h_M(z)=3+z+3z^2$.

	{\bf Case(8):} $a_1=2,a_2=2,a_3=3$\\
	In this case  $M_d\cong Q'/(y^2)\oplus Q'/(y^2)\oplus Q'/(y^3)$ and $h_{M_d}(z)=3+3z+z^2$. \\
	We  first consider the case when  dim$M=3$ because if dim$M\leq2$ there is nothing to prove.\\
	Let $\underline{x}=x_1,x_2,x_3$ be a maximal $\phi$-superficial sequence. Set $M_1=M/x_1M$, $M_2=M/(x_1,x_2)M$ and $J=(x_2,x_3)$.\\ 
	Since dim$M_2=1$ we can write $h$-polynomial of $M_2$ as $h_{M_2}(z)=3+(\rho_0-\rho_1)z+\rho_1z^2$ where $\rho_n=\ell(\mathfrak{m}^{n+1}M_2/{x_3\mathfrak{m}^nM_2})$.
	In this case $\rho_0=4$ and since $i(M)=i(M_2)=i(M_3)=2$, this implies $im(\phi \otimes A/(x_1,x_2))\sub \mathfrak{m}^2(Q/(x_1,x_2)\oplus Q/(x_1,x_2))$. So we get $\ell({\mathfrak{m}M_2}/{\mathfrak{m}^2M_2})=3+(\rho_0-\rho_1)=6$, because $M_2=$coker$(\phi\otimes A/(x_1,x_2))$. This implies $\rho_1=1$ and so  $h_{M_2}=3+3z+z^2$ and so $G(M_2)$ is \CM. By Sally-descent $G(M)$ is \CM\ and $h_M(z)=3+3z+z^2$.\\ Now assume dim$M\geq4$ and $\underline{x}=x_1,\ldots,x_d$ a maximal $\phi$-superficial sequence. Set $M_{d-3}=M/(x_1,\ldots,x_{d-3})M$. So, $G(M_{d-3})$ is \CM.\\
	By Sally-descent $G(M)$ is \CM\ and $h_{M}(z)=3+3z+z^2$.
	
	{\bf Case(9):} $a_1=2,a_2=3,a_3=3$\\
	In this case  $M_d\cong Q'/(y^2)\oplus Q'/(y^3)\oplus Q'/(y^3)$ and $h_{M_d}(z)=3+3z+2z^2$. \\
	We first consider the case when dim$M=3$ because if dim$M\leq2$ there is nothing to prove.\\
	Let $\underline{x}=x_1,x_2,x_3$ be a maximal $\phi$-superficial sequence. Set $M_1=M/x_1M$, $M_2=M/(x_1,x_2)M$ and $J=(x_2,x_3)$.\\ 
	Since dim$M_2=1$, we can write $h$-polynomial of $M_2$ as $h_{M_2}(z)=3+(\rho_0-\rho_1)z+\rho_1z^2$ where $\rho_n=\ell(\mathfrak{m}^{n+1}M_2/{x_3\mathfrak{m}^nM_2})$.
	In this case $\rho_0=5$. Since $i(M)=i(M_2)=i(M_3)=2$,  this implies $im(\phi \otimes Q/(x_1,x_2))\sub \mathfrak{m}^2(Q/(x_1,x_2)\oplus Q/(x_1,x_2))$. So we get  $\ell({\mathfrak{m}M_2}/{\mathfrak{m}^2M_2})=3+(\rho_0-\rho_1)=6$, because $M_2=$coker$(\phi \otimes Q/(x_1,x_2))$. This implies $\rho_1=2$ and so $h_{M_2}=3+3z+2z^2$. Therefore,  $G(M_2)$ is \CM. By Sally-descent $G(M)$ is \CM\ and $h_M(z)=3+3z+2z^2$. \\ Now assume dim$M\geq4$ and $\underline{x}=x_1,\ldots,x_d$ a maximal $\phi$-superficial sequence. Set $M_{d-3}=M/(x_1,\ldots,x_{d-3})M$. So, $G(M_{d-3})$ is \CM.\\
	By Sally-descent $G(M)$ is \CM\ and $h_{M}(z)=3+3z+2z^2$.
	
	{\bf Case(10):} $a_1=3,a_2=3,a_3=3$\\
	In this case, $M_d\cong Q'/(y^3)\oplus Q'/(y^3)\oplus Q'/(y^3)$ and $h_{M_d}(z)=3+3z+3z^2$. So,  $e(M_d)=i(M_d)\mu(M_d)=9$. Now if dim$M\geq1$ then $e(M)=i(M)\mu(M)$, because this equality is preserved modulo any $\phi$-superficial sequence.  This implies $G(M)$  is \CM \ and $h_M(z)=3+3z+3z^2$ (see \cite[Theorem 2]{PuMCM}).
\end{proof}

It is clear from above theorem :
\begin{enumerate}
	\item If $a_1=1 ,a_2=1 ,a_3= 1$ then $G(M)$ is \CM\ and $h_M(z)=3$.  
	\item If $a_1=1 ,a_2=1 ,a_3=2 $ then :
	\begin{enumerate}
		\item $G(M)$ is \CM\ if and only if $h_M(z)=3+z$.
		\item depth$G(M)=d-1$ if and only if $h_M(z)=3+z^2$.
	\end{enumerate}
	\item If $a_1=1 ,a_2=1 ,a_3=3$ then $G(M)$ is \CM\ and $h_M(z)=3+z+z^2$.  
	\item If $a_1=1 ,a_2=2 ,a_3=2 $ then:
	
	\begin{enumerate}
		\item $G(M)$ is \CM\ if and only if $h_M(z)=3+2z$.
		\item depth$G(M)=d-1$ if and only if $h_M(z)=3+z+z^2$.
		\item depth$G(M)=d-2$ if and only if $h_M(z)=3+3z^2-z^3.$
	\end{enumerate}
	\item If $a_1= 1,a_2=2 ,a_3=3 $ then:
	\begin{enumerate}
		\item $G(M)$ is \CM \ if and only if $h_M(z)=3+2z+z^2$.
		\item depth$G(M)=d-1$ if and only if $h_M(z)=3+z+2z^2.$
	\end{enumerate}
	\item If $a_1=2 ,a_2=2 ,a_3=2 $ then $G(M)$ is \CM\ and $h_M(z)=3+3z$. 
	\item If $a_1=1 ,a_2=3 ,a_3= 3$ then:
	\begin{enumerate}
		\item $G(M)$ is \CM\ if and only if $h_M(z)=3+2z+2z^2$.
		\item depth$G(M)=d-1$ if and only if $h_M(z)=3+z+3z^2.$
	\end{enumerate}
	\item If $a_1=2 ,a_2=2 ,a_3=3 $ then $G(M)$ is \CM\ and $h_M(z)=3+3z+z^2$. 
	\item If $a_1=2 ,a_2=3 ,a_3=3 $ then $G(M)$ is \CM\ and $h_M(z)=3+3z+2z^2$. 
	\item If $a_1= 3,a_2=3 ,a_3= 3$ then $G(M)$ is \CM\ and $h_M(z)=3+3z+3z^2$. 
\end{enumerate}
{\bf Note:} red$(M)< 2$ occurs only in cases (1), (2a), (4a), (6).\\
Now we can conclude:
\begin{corollary}
	Let $(A,\mathfrak{m})$ be a hypersurface ring  of dimension $d\geq 3$ with infinite residue field and  $M$ an MCM $A$-module with $\mu(M)=3$.
	Now if  $red(M)=2$, then depth$G(M)\geq d-2$. Also, possible $h$-polynomials are 
	
	\[ h_M(z) = \begin{cases} 
	3+3z^2-z^3 & depth$G(M)=d-2$. \\
	3+z^2 & depth$G(M)=d-1$. \\
	3+z+z^2 & depth$G(M)=d-1$. \\
	3+z+2z^2 & depth$G(M)=d-1$. \\
	3+z+3z^2 & depth$G(M)=d-1$. \\
	3+z+z^2 & G(M) \ \text{is Cohen-Macaulay}. \\
	3+2z+z^2 &  G(M) \ \text{is Cohen-Macaulay}. \\
	3+2z+2z^2 &  G(M) \ \text{is Cohen-Macaulay}. \\
	3+3z+z^2 &  G(M) \ \text{is Cohen-Macaulay}. \\
	3+3z+2z^2 &  G(M) \ \text{is Cohen-Macaulay}.\\
	3+3z+3z^2 &  G(M) \ \text{is Cohen-Macaulay}.
	\end{cases}
	\]
\end{corollary}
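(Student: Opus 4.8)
Since the hypothesis $red(M)=2$ in particular gives $red(M)\le 2$, the inequality depth$G(M)\ge d-2$ is immediate from Theorem \ref{muM=3}; the only new content is the list of possible $h$-polynomials. The plan is to harvest this list directly from the ten numbered cases (1)--(10) recorded after the proof of Theorem \ref{muM=3}. These enumerate, for $1\le a_1\le a_2\le a_3\le 3$, every isomorphism type $M_d\cong Q'/(y^{a_1})\oplus Q'/(y^{a_2})\oplus Q'/(y^{a_3})$ of the Artinian reduction, together with the resulting $h_M(z)$ and the exact value of depth$G(M)$ (or the assertion that $G(M)$ is \CM). Thus every MCM module with $\mu(M)=3$ and $red(M)\le 2$ realises one of the pairs $(h_M,\ \text{depth}\,G(M))$ appearing there, and it remains only to discard those pairs for which $red(M)<2$.

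The heart of the argument is therefore to pin down precisely which of these cases have $red(M)\le 1$. First I would record that $red(M)\le 1$ is equivalent to $\mathfrak{m}^2M=J\mathfrak{m}M$ for a minimal reduction $J$, and that by \cite[Theorem 16]{Pu0} this already forces $G(M)$ to be \CM. When $G(M)$ is \CM\ the $h$-polynomial is preserved under the $\phi$-superficial reduction, so $h_M(z)=h_{M_d}(z)=\sum_{j=1}^{3}(1+z+\cdots+z^{a_j-1})$, whose degree is $\max_j a_j-1$. Hence $red(M)\le 1$ holds exactly when $h_M$ has degree at most $1$, i.e. when $h_M(z)\in\{\,3,\ 3+z,\ 3+2z,\ 3+3z\,\}$, which by the table are cases (1), (2a), (4a) and (6). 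Contrapositively, every other subcase, having $h_M$ outside this set, must have $\mathfrak{m}^2M\ne J\mathfrak{m}M$ and hence $red(M)\ge 2$, so $red(M)=2$ since $red(M)\le 2$ is assumed. This is exactly the content of the Note following Theorem \ref{muM=3}.

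Discarding cases (1), (2a), (4a), (6) and collecting the surviving $(h_M,\ \text{depth}\,G(M))$ pairs from cases (2b), (3), (4b), (4c), (5), (7), (8), (9), (10) then reproduces, line by line, the table in the statement; the hypothesis $d\ge 3$ guarantees that the three depth regimes $d$, $d-1$, $d-2$ are distinct and nonnegative, so no two rows collapse. The only genuinely delicate step is the equivalence ``$red(M)\le 1\iff G(M)$ is \CM\ with $\deg h_M\le 1$'', which resolves the $red=1$ versus $red=2$ split inside the two ambiguous families $(1,1,2)$ and $(1,2,2)$; once that is in hand, the remainder is pure bookkeeping on facts already established in the proof of Theorem \ref{muM=3}.
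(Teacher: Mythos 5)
Your overall strategy is the same as the paper's: the inequality $\operatorname{depth} G(M)\ge d-2$ comes from Theorem \ref{muM=3}, and the table is obtained by discarding from the itemized list (1)--(10) the subcases (1), (2a), (4a), (6), which is exactly what the paper's ``Note'' is invoked for. However, your justification of the discarding step has a directional gap. What you actually prove is: $red(M)\le 1$ implies $G(M)$ is \CM\ (by \cite[Theorem 16]{Pu0}) and $h_M=h_{M_d}$ has degree $\le 1$; contrapositively, every subcase with $\deg h_M\ge 2$ has $red(M)=2$. That is the literal content of the Note, and it does show that the rows you keep genuinely belong to the $red(M)=2$ class. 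But to \emph{discard} (1), (2a), (4a), (6) you need the converse implication: if $M$ lands in one of those subcases, i.e. $G(M)$ is \CM\ with $\deg h_M\le 1$, then $red(M)\le 1$, so that a module with $red(M)=2$ cannot land there. You assert this converse (your ``exactly when'') but never prove it, and it does not follow from \cite[Theorem 16]{Pu0} or from preservation of $h$-polynomials: nothing in your argument excludes, say, a module with $red(M)=2$, $G(M)$ \CM\ and $h_M=3+z$, and if such a module existed your table (and the corollary) would be missing a row. So the half of the equivalence you prove is not the half the corollary uses, and exhaustiveness of the list is left open.

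The missing direction is true, but it needs a Valabrega--Valla type argument. If $G(M)$ is \CM\ and $\underline{x}=x_1,\dots,x_d$ is a maximal superficial sequence generating $J$, then $x_1^*,\dots,x_d^*$ is a $G(M)$-regular sequence, which gives $\mathfrak{m}^{n+1}M\cap JM=J\mathfrak{m}^nM$ for all $n\ge 0$. In each of the subcases (1), (2a), (4a), (6) one has $\deg h_{M_d}=\deg h_M\le 1$, i.e. $\mathfrak{m}^2M_d=0$, i.e. $\mathfrak{m}^2M\subseteq JM$; combined with the Valabrega--Valla equality for $n=1$ this yields $\mathfrak{m}^2M=\mathfrak{m}^2M\cap JM=J\mathfrak{m}M$, hence $red_J(M)\le 1$. (In case (1) this is immediate anyway, since $M$ is Ulrich and $\mathfrak{m}M=JM$.) Note that the paper itself states the Note without proof, so you are not being held to a higher standard than the authors on the one direction you did prove; but your write-up makes the unproved converse the load-bearing step, so you should either supply the lemma above or cite a reference for the fact that for \CM\ associated graded modules the reduction number equals $\deg h_M$. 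With that in place, the rest of your bookkeeping closes the argument.
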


\section{\bf  The case when $\mu(M)=4$ and $e(A)=3$}

We first prove a lemma.

\begin{lemma}\label{le1}
	Let $(A,\mathfrak{m})$ be a complete hypersurface ring of dimension $d$ with $e(A)=3$ and $M$ be a MCM module. Let $\underline{x}=x_1,\ldots,x_d$ be a maximal $\phi$-superficial sequence. If $M$ has no free summand, then $M_d=M/(x_1,\ldots,x_d)M$ also has no free summand.
\end{lemma}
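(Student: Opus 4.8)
The plan is to invoke the matrix factorization description of $M$ and to track exactly what a free summand of $M_d$ forces on the presentation. I will argue the contrapositive: if $M_d$ has a free summand, then so does $M$.

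First I would fix the set-up. Since $A$ is a complete hypersurface with $e(A)=3$, write $(A,\m)=(Q/(f),\n/(f))$ with $(Q,\n)$ regular local of dimension $d+1$ and $f\in\n^3\setminus\n^4$, and take a minimal presentation $0\rt Q^n\xrightarrow{\phi}Q^n\rt M\rt 0$ with $n=\mu(M)$ (here $\phi$ is injective as $\operatorname{projdim}_Q M=1$). Because $f$ annihilates $M=\operatorname{coker}\phi$ we have $f\cdot Q^n\subseteq\image\phi$, so there is a matrix $\psi$ over $Q$ with $\phi\psi=f\,I_n$, and injectivity of $\phi$ forces $\psi\phi=f\,I_n$ as well; this is the cofactor (matrix-factorization) partner of $\phi$. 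The standard fact I would use is that $M$ has \emph{no} free summand if and only if this matrix factorization is reduced, i.e. all entries of $\psi$ (as well as of $\phi$) lie in $\n$: a unit entry of $\psi$ is precisely what splits off a trivial block $(f,1)$, whose cokernel is the free module $A$.

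Next I would reduce modulo the $\phi$-superficial sequence $\underline{x}$. Set $Q'=Q/(\underline{x})$, a DVR with maximal ideal $(y)$; by the $\phi$-superficial hypothesis $v_{Q'}(\bar f)=v_Q(f)=3$, so $\bar f=u\,y^3$ with $u$ a unit. Reduction gives $M_d=\operatorname{coker}\bar\phi$ together with a matrix factorization $\bar\phi\bar\psi=\bar\psi\bar\phi=\bar f\,I_n$ over $Q'$. Over the DVR $Q'$ I would put $\bar\phi$ in Smith normal form $\operatorname{diag}(y^{a_1},\ldots,y^{a_n})$, so that $M_d\cong\bigoplus_i Q'/(y^{a_i})$ with $0\le a_i\le 3$, and the complementary invariant factors of $\bar\psi$ are then $y^{3-a_i}$. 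Since the free $A_d=Q'/(y^3)$-module corresponds to $a_i=3$, a free summand of $M_d$ means some $a_i=3$; the matching invariant factor of $\bar\psi$ is then $y^{0}$, a unit, so the gcd of the entries of $\bar\psi$ is a unit and hence some entry of $\bar\psi$ is a unit of $Q'$.

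Finally I would lift. A unit of $Q'=Q/(\underline{x})$ pulls back through the surjection $Q\twoheadrightarrow Q'$ to a unit of $Q$ (anything mapping outside $\n'=\n/(\underline{x})$ must lie outside $\n$), so the corresponding entry of $\psi$ is a unit. Thus $(\phi,\psi)$ is \emph{not} reduced, and by the criterion quoted above $M$ has a free summand, which proves the contrapositive. I expect the main obstacle to be the bookkeeping in the third step, namely relating a free summand of $M_d$ to a unit invariant factor of $\bar\psi$ through the identity $\bar\phi\bar\psi=\bar f\,I_n$ and the Smith normal form, together with justifying the reduced-matrix-factorization criterion for the absence of free summands; by comparison the existence of $\psi$ and the lifting of the unit are routine. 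I would also note that the argument works verbatim for every $e\ge 2$, the value $e=3$ entering only through $\bar f=u\,y^3$.
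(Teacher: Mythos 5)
Your proof is correct, but it follows a genuinely different route from the paper's. The paper argues the implication directly, using the other half of Eisenbud's theory: since $M$ is MCM with no free summand over the hypersurface, $M=\operatorname{Syz}_1^{A}(L)$ for some MCM module $L$, giving an exact sequence $0\rightarrow M\rightarrow F\rightarrow L\rightarrow 0$ with $F$ free and $M\subseteq \mathfrak{m}F$; reducing modulo $\underline{x}$ (exactness survives because $\underline{x}$ is a regular sequence on the MCM module $L$) yields $M_d\subseteq \mathfrak{m}F_d$, hence $\mathfrak{m}^2M_d\subseteq \mathfrak{m}^3F_d=0$ since $\operatorname{red}(A)=2$, whereas a free summand of $M_d$, i.e.\ a copy of $A_d\cong Q'/(y^3)$, would have nonzero $\mathfrak{m}^2$. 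You instead prove the contrapositive through the matrix-factorization partner $\psi$: a free summand of $M_d$ produces, via Smith normal form over the DVR $Q'$ and the identity $\bar\phi\bar\psi=\bar f\,I_n$, a unit invariant factor of $\bar\psi$, hence a unit entry of $\bar\psi$, which lifts to a unit entry of $\psi$ and splits a trivial block $(f,1)$, i.e.\ a copy of $A$, off $M$. Both arguments ultimately rest on Eisenbud's matrix-factorization paper---the paper cites the syzygy characterization of MCM modules without free summands, you cite the reduced-factorization characterization---and both are sound. The paper's version is shorter and needs no normal-form bookkeeping; yours locates the obstruction concretely in the adjoint matrix and, as you observe, uses $e(A)=3$ only through $\bar f=u\,y^3$, so it applies verbatim for every multiplicity (though the paper's argument generalizes just as readily, with $\mathfrak{m}^{e-1}M_d\subseteq\mathfrak{m}^{e}F_d=0$ replacing the exponent-$2$ computation). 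One small imprecision on your side: $v_{Q'}(\bar f)=v_Q(f)$ is not literally one of the clauses of $\phi$-superficiality, which concern the entries and the determinant of $\phi$ (and $f$ need not equal $\det\phi$); it follows instead from clause (1), namely that $\underline{x}$ is an $A$-superficial sequence, so $e(A_d)=e(A)=3$ and $v_{Q'}(\bar f)=\ell\bigl(Q'/(\bar f)\bigr)=\ell(A_d)=3$.
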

\begin{proof}
	Since $M$ has no free summand there exists an MCM module $L$ such that $M=Syz_1^{A}(L)$ ( for instance see \cite[Theorem 6.1]{Eisenbud}). So we have  $0\rt M\rt F \rt L\rt 0$ where $F=A^{\mu(L)}$ and $M\sub \mathfrak{m}F$. Set $F_d=F/(\underline{x})F$. Going modulo $\underline{x}$ we get $M_d\sub \mathfrak{m}F_d$. So we have $\mathfrak{m}^2M_d\sub \mathfrak{m}^3F_d=0$, because red$(A)=2$. This implies $M_d$ has no free summand.
\end{proof}
\begin{theorem}
	Let $(A,\mathfrak{m})$ be a  hypersurface ring of dimension $d$ with $e(A)=3$ and $M$ an MCM module with no free summand. If  $\mu(M)=4$, then depth$G(M)\geq d-3$.
\end{theorem}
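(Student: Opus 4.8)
The plan is to follow the reduction scheme of Theorems \ref{muM=2} and \ref{muM=3}, but to exploit the extra rigidity forced by $e(A)=3$ together with Lemma \ref{le1}. First I would pass, via \ref{Base change}, to the case where $A$ is complete with infinite residue field, write $A=Q/(f)$ with $(Q,\mathfrak{n})$ regular local of dimension $d+1$ and $f\in\mathfrak{n}^3\setminus\mathfrak{n}^4$; then $\red(A)=2$ and hence $\red(M)\le 2$. If $d\le 3$ there is nothing to prove, so assume $d\ge 4$. Fix a minimal presentation $0\rt Q^4\xrightarrow{\phi}Q^4\rt M\rt 0$ and a maximal $\phi$-superficial sequence $\underline{x}=x_1,\dots,x_d$, and set $J=(\underline{x})$, $M_d=M/\underline{x}M$ and $Q'=Q/(\underline{x})$, a DVR with uniformizer $y$.

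The crucial input is the computation inside the proof of Lemma \ref{le1}: since $M$ has no free summand it is a first syzygy $M\subseteq\mathfrak{m}F$ with $F$ free, and reducing modulo $\underline{x}$ gives $\mathfrak{m}^2M_d\subseteq\mathfrak{m}^3F_d=0$ because $\red(A)=2$. Thus $\mathfrak{m}^2M\subseteq JM$, and over the DVR $Q'$ we obtain $M_d\cong(Q'/(y))^p\oplus(Q'/(y^2))^q$ with $p+q=4$, so $h_{M_d}(z)=4+qz$ and $e(M)=e(M_d)=4+q$. Two extreme cases are then immediate. If $q=0$ then $e(M)=4=\mu(M)$, so $M$ is Ulrich and $G(M)$ is \CM. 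If $q=4$ then every invariant factor of $\phi\otimes Q'$ equals $y^2$, so the entries of $\phi\otimes Q'$ have order exactly $2$; by the $\phi$-superficiality condition \ref{phi}(2) this gives $i(M)=2$, whence $e(M)=8=\mu(M)i(M)$ and $G(M)$ is \CM\ by \cite[Theorem 2]{PuMCM}.

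For $q\in\{1,2,3\}$ the point is that $\mathfrak{m}^2M\subseteq JM$ forces the defect of $G(M)$ to live entirely in degree $2$. Consider $\overline{G(M)}=G(M)/(\underline{x^*})G(M)$; since $\mathfrak{m}^2M\subseteq JM$ and $\mathfrak{m}^3M=J\mathfrak{m}^2M\subseteq J\mathfrak{m}M$, its Hilbert series is $4+\alpha z+\beta z^2$ with $\beta=\ell(\mathfrak{m}^2M/J\mathfrak{m}M)$ and $\alpha=\ell(\mathfrak{m}M_d/\mathfrak{m}^2M_d)=q$. By \ref{overline{G(M)}} we have $\beta\le\alpha=q$. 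Now set $\delta=\sum_{n}\ell\big((\mathfrak{m}^{n+1}M\cap JM)/J\mathfrak{m}^nM\big)$; because $\mathfrak{m}^{n+1}M\subseteq JM$ for $n\ge1$ while $\mathfrak{m}^{n+1}M=J\mathfrak{m}^nM$ for $n\ge2$ (as $\red(M)\le2$), every term with $n\ne1$ vanishes and $\delta=\ell(\mathfrak{m}^2M/J\mathfrak{m}M)=\beta\le q\le 3$. Feeding this into the depth estimate $\depth G(M)\ge d-\delta$ of \cite[Theorem 5.1]{apprx} yields $\depth G(M)\ge d-3$, the general case following from the dimension-$4$ case by Sally-descent \ref{Sally-des}.

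The main obstacle is the boundary case $q=3$, where $\delta$ can equal $3$: the estimate of \cite[Theorem 5.1]{apprx} is quoted above only for $\delta\le2$, so I would need either the unrestricted form of that approximation-complex bound ($\depth G(M)\ge d-\delta$ with no constraint on $\delta$) or, failing that, a direct Sally-descent analysis reducing to $\dim M=4$ and climbing up from the one-dimensional module $M_3$ through $M_2,M_1$, using Singh's equality \ref{mod-sup}, the exact sequences \ref{exact seq} and \ref{exact d}, and the admissibility constraint $\beta\le\alpha\le\mu(M)$ of \ref{overline{G(M)}} to eliminate the inadmissible Hilbert series and pin down $\depth G(M)$ exactly. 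I expect the bookkeeping for $q=3$ to be the most delicate part, playing the role that the case $a_1=1,a_2=2,a_3=2$ played for $\mu(M)=3$.
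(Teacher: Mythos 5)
Your reduction, the use of Lemma \ref{le1} to force $M_d\cong (Q'/(y))^p\oplus(Q'/(y^2))^q$, and the two extreme cases $q=0$ (Ulrich) and $q=4$ (where $e(M)=\mu(M)i(M)$) coincide with the paper's proof. For $q\le 2$ your route is actually shorter than the paper's: the observations $\mathfrak{m}^2M\subseteq JM$ and $\red(M)\le 2$ collapse the Valabrega--Valla defect to the single term $\delta=\ell(\mathfrak{m}^2M/J\mathfrak{m}M)\le q$ (by \ref{overline{G(M)}}), and \cite[Theorem 5.1]{apprx} then gives $\depth G(M)\ge d-2$ at once, whereas the paper runs a full case-by-case analysis for $e(M)=5,6$; the trade-off is that you do not recover the exact depths and $h$-polynomials that the paper obtains.

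The case $q=3$, i.e.\ $e(M)=7$, is a genuine gap, and neither remedy you suggest can close it. First, there is no unrestricted bound $\depth G(M)\ge d-\delta$ to cite: \cite[Theorem 5.1]{apprx} carries the hypothesis $\delta\le 2$, the unrestricted statement is precisely the kind of Guerrieri-type depth bound that is not known (and not expected) to hold once the defect reaches $3$, and you offer no proof of it. Second, Sally descent combined with eliminating impossible Hilbert series via \ref{overline{G(M)}} cannot settle this case either, because the configuration you would need to eliminate actually occurs: in the paper's Subcase (ii).(b).(2).($\beta$) of Case (4) one has $\dim M=4$, $\depth G(M_1)=\depth G(M_2)=\depth G(M_3)=0$ and $\ell(\mathfrak{m}^2M/J\mathfrak{m}M)=3$ (Example (1) of Case (3) in Section 8 realizes the depth-zero three-dimensional module), so no Hilbert-series bookkeeping can rule it out. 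What must be proved there is the positive assertion that $G(M)$ itself still has depth $1$, with $h_M(z)=3+4z+(1-z)^4$; this is the technical heart of the paper's argument and it uses machinery entirely absent from your outline: the Ratliff-Rush claims $\widetilde{\mathfrak{m}^iM_3}=\mathfrak{m}^iM_3$ for $i\ge 2$ and $\ell(\widetilde{\mathfrak{m}M_3}/\mathfrak{m}M_3)\le 1$, descent of the Ratliff-Rush filtration modulo superficial elements (\cite[2.1]{Pu2}) to conclude that $\widetilde{G(M_2)}$ and $\widetilde{G(M_1)}$ are Cohen-Macaulay, and then, assuming $\widetilde{\mathfrak{m}M}\ne\mathfrak{m}M$, Proposition \ref{ASSG} (generalized Cohen-Macaulayness of $G(M)$), the prime-avoidance Proposition \ref{vector}, and the two-superficial-element argument with $u-\theta v$ to reach a contradiction and conclude $\widetilde{\mathfrak{m}^iM}=\mathfrak{m}^iM$ for all $i$. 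Without carrying out this analysis, or supplying a genuine substitute for it, your proposal proves the theorem only for $e(M)\in\{4,5,6,8\}$.
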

\begin{proof}
	By \ref{Base change} we can assume that $A$ is a complete local ring with infinite residue field.
	Since $e(A)=3$,  we can take $(A,\mathfrak{m})=(Q/(g),\mathfrak{n}/(g))$ where $(Q,\mathfrak{n})$ is a regular local ring of dimension $d+1$ and $g\in \mathfrak{n}^3\setminus\mathfrak{n}^4$. This implies that $h_A(z)=1+z+z^2$ and $\mathfrak{m}^3=J\mathfrak{m}^2$.   Since $\mathfrak{m}^3=J\mathfrak{m}^2$,  we have red$(M)\leq 2$.
	
	Let dim$M\geq1$ and $0\rt Q^4\xrightarrow{\phi} Q^4\rt M\rt 0$ be a minimal presentation of $M$. Let $\underline{x}=x_1,\ldots,x_d$ be a maximal $\phi$-superficial sequence (see \ref{phi}). Set $M_d=M/\underline{x}M$ and $(Q',(y))=(Q/(\underline{x}),\mathfrak{n}/(\underline{x}))$.
	
	Clearly, $Q'$ is DVR and so $M_d\cong Q'/(y^{a_1})\oplus Q'/(y^{a_2})\oplus Q'/(y^{a_3})\oplus Q'/(y^{a_4})$.
	
	From the Lemma \ref{le1}, $M_d$ has no free summand. So, we can assume that $1\leq a_1\leq a_2\leq a_3\leq a_4\leq 2$.

	This implies $4\leq e(M)\leq 8$. Now we  consider all cases separately:
	
	{\bf Case(1): $e(M)=4$}\\ 
	In this case $M_d\cong Q'/(y)\oplus Q'/(y)\oplus Q'/(y)\oplus Q'/(y)$. This implies $h_{M_d}=4$, so $e(M_d)=\mu(M_d)=4$.\\ For dim$M\geq1$, $e(M)=\mu(M)=4$. Also, notice that $i(M)=i(M_d)=1$. Since $e(M)=\mu(M)=4$, $M$ is an Ulrich module (see \ref{Ulrich}).  This implies that $G(M) $ is Cohen-Macaulay and $h_M(z)=4$ (see \cite[Theorem 2]{PuMCM}). 
	
	{\bf Case(2): $e(M)=5$}\\
	In this case   $M_d\cong Q'/(y)\oplus Q'/(y)\oplus Q'/(y)\oplus Q'/(y^2)$ and this implies  $h_{M_d}(z)=4+z$.\\
	We first consider the case when dim$M=4$ because if dim$M\leq3$ there is nothing to prove.
	Let $\underline{x}=x_1,x_2,x_3,x_4$ be a maximal $\phi$-superficial sequence. Set $M_1=M/x_1M$, $M_2=M/(x_1,x_2)M$, $M_3=M/(x_1,x_2,x_3)M$ and $J=(x_3,x_4)$.\\ 
	Since dim$M_3=1$ we can write $h$-polynomial of $M_3$ as $h_{M_3}(z)=4+(\rho_0-\rho_1)z+\rho_1z^2$
	where $\rho_n=\ell(\mathfrak{m}^{n+1}M_3/{x_4\mathfrak{m}^nM_3})$. Since $\rho_0=1$ and all the coefficients of $h_{M_3}$ are non-negative [see \ref{d=1}(1)],  possible values of $\rho_1$ are $0$ and $1$.\\  
	{\bf Subcase (i): $\rho_1=0.$}\\ In this case, $M_3$ has minimal multiplicity and $h_{M_3}(z)=4+z$. So $G(M_3)$ is \CM. By Sally-descent $G(M)$ is \CM.\\
	{\bf Subcase (ii): $\rho_1=1$.}\\
	In this case,  $h_{M_3}(z)=4+z^2$. This implies depth$G(M_3)=0$, because $h_{M_3}(z)\ne h_{M_4}(z)$ (see \ref{Property}).\\
	Since dim$M_2=2$, we have (see \ref{mod-sup})  $$e_2(M_2)=e_2({M_3})-\sum b_i(x_3,M_2),$$
	where $b_i(x_3,M_2)=\ell(\mathfrak{m}^{i+1}M_2:x_3/\mathfrak{m}^iM_2)$.
	We know that $e_2(M_2)$  and $\sum b_i(x_3,M_2) $ are non-negative integers and  $e_2({M_3})=1$. This implies  $\sum b_i(x_3,M_2) \leq1$.
	
	Since red$(M)\leq2$, from exact sequence (\ref{exact seq})
	\begin{align*}
	0 \rt \mathfrak{m}^nM_2:J/\mathfrak{m}^{n-1}M_2\rt \mathfrak{m}^nM_2:x_3/\mathfrak{m}^{n-1}M_2 & \rt \mathfrak{m}^{n+1}M_2:x_3/\mathfrak{m}^{n}M_2\\
	\rt \mathfrak{m}^{n+1}M_2/J\mathfrak{m}^nM_2
	&\rt \mathfrak{m}^{n+1}{M_3}/x_4\mathfrak{m}^n{M_3}\rt 0
	\end{align*}
	we get, if $b_1(x_3,M_2)=0$ then  $b_i(x_3,M_2)=0$ for all $i\geq2$.\\
	{\bf Claim:} depth$G(M_2)=1$.\\
	{\bf Proof of the claim:}
	If possible assume that depth$G(M_2)=0$. This implies $ b_1(x_3,M_2) = 1$ (see \ref{Property}). 
	
	From the above exact sequence we have
	$$ 0\rt \mathfrak{m}^{2}M_2:x_3/\mathfrak{m}M_2
	\rt \mathfrak{m}^{2}M_2/J\mathfrak{m}M_2
	\rt \mathfrak{m}^{2}{M_3}/x_4\mathfrak{m}{M_3}\rt 0.$$
	So, $\ell(\mathfrak{m}^{2}M_2/J\mathfrak{m}M_2)=\rho_1+b_1(x_3,M_2)$.
	
	Consider $\overline{G(M_2)}=G(M_2)/(x_3^*,x_4^*)G(M_2)$ then we have
	$$G(M_2)/(x_3^*,x_4^*)G(M_2)=M_2/\mathfrak{m}M_2 \oplus \mathfrak{m}M_2/JM_2+\mathfrak{m}^2M_2\oplus \mathfrak{m}^2M_2/J\mathfrak{m}M_2+\mathfrak{m}^3M_2$$
	Thus we have 	$$\overline{G(M_2)}=M_2/\mathfrak{m}M_2 \oplus \mathfrak{m}M_2/JM_2 \oplus \mathfrak{m}^2M_2/J\mathfrak{m}M_2.$$ Its Hilbert series is
	$4+z+(\rho_1+b_1(x_3,M_2))z^2$ where $\rho_1+b_1(x_3,M_2)=2$. But this is not a possible Hilbert series (see \ref{overline{G(M)}}). So  this implies that depth$G(M_2)\geq 1$. Notice that depth$G(M_2)\ne 2$, because depth$G(M_3)=0$ in this case. Therefore depth$G(M_2)=1$. By Sally-descent depth$G(M)=3$.
	
	Now assume dim$M\geq5$ and $\underline{x}=x_1,\ldots,x_d$ a maximal $\phi$-superficial sequence. Set $M_{d-4}=M/(x_1,\ldots,x_{d-4})M$. We now have two cases.\\
	First case when $G(M_{d-4})$ is \CM. By Sally-descent $G(M)$ is \CM\ and $h_M(z)=4+z$.\\
	Second case when depth $G(M_{d-4})=3$. By Sally-descent depth$G(M)=d-1$ and $h_M(z)=4+z^2$.

	{\bf Case(3): $e(M)=6$}\\
	In this case  $M_d\cong Q'/(y)\oplus Q'/(y)\oplus Q'/(y^2)\oplus Q'/(y^2)$ and this implies  $h_{M_d}(z)=4+2z$.\\
	We first consider the  case when dim$M=4$ because if dim$M\leq3$ there is nothing to prove.\\
	Let $\underline{x}=x_1,x_2,x_3,x_4$ be a maximal $\phi$-superficial sequence. Set $M_1=M/x_1M$, $M_2=M/(x_1,x_2)M$, $M_3=M/(x_1,x_2,x_3)M$, $J_1=(x_2,x_3,x_4)$  and $J_2=(x_3,x_4)$.\\ 
	Since dim$M_3=1$ we can write $h$-polynomial of $M_3$ as $h_{M_3}(z)=4+(\rho_0-\rho_1)z+\rho_1z^2$
	where $\rho_n=\ell(\mathfrak{m}^{n+1}M_3/{x_4\mathfrak{m}^nM_3})$.
	So we have $\rho_0=2$ and since all the coefficients of $h_M$ are non-negative [see \ref{d=1}(1)], so possible values of $\rho_1$ are $0$, $1$ and $2$.\\
	{\bf Subcase(i): $\rho_1=0$.}\\
	In this case, $M_3$ has minimal multiplicity and $h_{M_3}(z)=4+2z$. This implies  $G(M_3)$ is \CM. By Sally-descent $G(M)$ is \CM.\\
	{\bf Subcase(ii): $\rho_1=1$. }\\
	In this case,  $h_{M_3}(z)=4+z+z^2$. This implies  depth$G(M_3)=0$, because $h_{M_3}(z)\ne h_{M_4}(z)$ (see \ref{Property}).\\
	Since dim$M_2=2$, we have (see \ref{mod-sup})
	$$e_2(M_2)=e_2({M_3})-\sum b_i(x_3,M_2),$$
	where $b_i(x_3,M_2)=\ell(\mathfrak{m}^{i+1}M_2:x_3/\mathfrak{m}^iM_2)$.
	We know that $e_2(M_2) $ and $\sum b_i(x_3,M_2)  $ are non-negative integers. In this case we also have  $e_2({M_3})=1$. This implies $\sum b_i(x_3,M_2) \leq 1 $.
	
	Since red$(M)\leq2$, from exact sequence 
	\begin{align*}
	0 \rt \mathfrak{m}^nM_2:J_2/\mathfrak{m}^{n-1}M_2\rt \mathfrak{m}^nM_2:x_3/\mathfrak{m}^{n-1}M_2
	& \rt \mathfrak{m}^{n+1}M_2:x_3/\mathfrak{m}^{n}M_2\\
	\rt \mathfrak{m}^{n+1}M_2/J_2\mathfrak{m}^nM_2
	&\rt \mathfrak{m}^{n+1}{M_3}/x_4\mathfrak{m}^n{M_3}\rt 0
	\end{align*}
	we get, if $b_1(x_3,M_2)=0$ then  $b_i(x_3,M_2)=0$ for all $i\geq2$. So, $\sum b_i(x_3,M_2)  \ne 0$ implies $b_1(x_3,M_2)=1$. \\
	Now we have two cases.\\
	{\bf Subcase (ii).(a):} When $b_1(x_3,M_2)=0$.\\ This implies  depth$G(M_2)\geq 1$ (see \ref{Property}). In fact depth$G(M_2)=1$ otherwise $G(M_2)$ is \CM. This  is not possible because depth$G(M_3)=0$. Now by Sally-descent depth$G(M)=3$ and $h_M(z)=4+z+z^2$. \\
	{\bf Subcase (ii).(b):} When  $b_1(x_3,M_2)=1$.\\ So, depth$G(M_2)=0$ (see \ref{Property}). In this case $h_{M_2}(z)=h_{M_3}(z)-(1-z)^2z=4+3z^2-z^3$ (see \ref{mod-sup}).
	
	From the above exact sequence we get
	\begin{align} \label{M_2}
	0\rt \mathfrak{m}^{2}M_2:x_3/\mathfrak{m}M_2
	\rt \mathfrak{m}^{2}M_2/J_2\mathfrak{m}M_2
	\rt \mathfrak{m}^{2}{M_3}/x_4\mathfrak{m}{M_3}\rt 0.
	\end{align}
	
	So, $\ell(\mathfrak{m}^{2}M_2/J_2\mathfrak{m}M_2)=\rho_1+b_1(x_3,M_2)=2$.\\
	Since dim$M_1=3$,  from short exact sequence (see \ref{exact d}) $$0\rt \mathfrak{m}^2M_1:x_2/\mathfrak{m}M_1\rt {\mathfrak{m}^2M_1}/J_1\mathfrak{m}M_1\rt \mathfrak{m}^2M_2/{J_2}\mathfrak{m}M_2\rt 0$$
	we have $\ell(\mathfrak{m}^2M_1/J_1\mathfrak{m}M_1)=2+\ell(\mathfrak{m}^2M_1:x_2/\mathfrak{m}M_1)$.
	We also have
		$$G(M_1)/(x_2^*,x_3^*,x_4^*,)G(M_1)=M_1/\mathfrak{m}M_1 \oplus \mathfrak{m}M_1/J_1M_1\oplus \mathfrak{m}^2M_1/J_1\mathfrak{m}M_1.$$
	By considering its Hilbert series we get   $\ell(\mathfrak{m}^2M_1/J_1\mathfrak{m}M_1)\leq 2$, because in this case $\ell(\mathfrak{m}M_1/J_1M_1)=2$ (see \ref{overline{G(M)}}). Therefore we have $\ell(\mathfrak{m}^2M_1/J_1\mathfrak{m}M_1)= 2$.
	
	We also know that $\mathfrak{m}^2M_1\sub J_1M_1$.\\
	So in this case 
	$$\delta =\sum\ell(\mathfrak{m}^{n+1}M_1\cap J_1M_1/J_1\mathfrak{m}^nM_1)= 2$$
	
	We know that if $\delta\leq2$ then depth$G(M)\geq d-\delta$ (see \cite[Theorem 5.1]{apprx}). So we have depth$G(M_1)\geq 1$. Also notice that depth$G(M_1)=1$, because depth$G(M_2)=0$.\\ 
	By Sally-descent  depth$G(M)=2$ and $h_M(z)=4+3z^2-z^3$. \\
	{\bf Subcase(iii): $\rho_1=2$}\\
	In this case $h_{M_3}(z)=4+2z^2$. This implies depth$G(M_3)=0$, because $h_{M_3}(z)\ne h_{M_4}(z) $ (see \ref{Property}).\\
	Since dim$M_2=2$, we have (see \ref{Property})
	$$e_2(M_2)=e_2({M_3})-\sum b_i(x_3,M_2).$$
	We know that $e_2(M_2) $ and $\sum b_i(x_3,M_2)  $ are non-negative integers. In this case we also have  $e_2({M_3})=2$. This implies $\sum b_i(x_3,M_2) \leq 2$.\\
	We know that  $b_1(x_3,M_2)=0$ implies all $b_i(x_3,M_2)=0$.\\
	Now we have two cases.\\
	{\bf Subcase (iii).(a):} When $b_1(x_3,M_2)=0$.\\
	So, in this case depth$G(M_2)=1$ (see \ref{Property}). Also notice depth$G(M_2)\ne 2$ because depth$G(M_3)=0$. By Sally-descent depth$G(M)=3$ and $h_M(z)=4+2z^2$.\\  
	{\bf Subcase (iii).(b):} When $b_1(x_3,M_2)\neq0$.\\ Now from the exact sequence (\ref{M_2}) we get $\ell(\mathfrak{m}^2M_2/J_2\mathfrak{m}M_2)=\rho_1+b_1(x_3,M_2)\geq3$.\\  Now consider  $\overline{G(M_2)}=G(M_2)/(x_3^*,x_4^*)G(M_2)$. Then we get 
	$$\overline{G(M_2)}=M_2/\mathfrak{m}M_2\oplus \mathfrak{m}M_2/J_2M_2\oplus\mathfrak{m}^2M_2/J_2\mathfrak{m}M_2$$ 
	Its  Hilbert series is $4+2 z+(\rho_1+b_1(x_3,M_2))$,  because $\ell(\mathfrak{m}M_2/J_2M_2)=2$. But this is not  a possible Hilbert series (see \ref{overline{G(M)}}). Therefore  the case when $b_1(x_3,M_2)\neq0$ is not possible.
	
	Now assume dim$M\geq5$ and $\underline{x}=x_1,\ldots,x_d$ a maximal $\phi$-superficial sequence. Set $M_{d-4}=M/(x_1,\ldots,x_{d-4})M$. We now have two cases.\\
	First case when $G(M_{d-4})$ is \CM. By Sally-descent $G(M)$ is \CM\ and $h_M(z)=4+2z$.\\
	Second case when depth $G(M_{d-4})=3$. By Sally-descent depth$G(M)=d-1$ and $h_M(z)=4+z+z^2$ or $h_M(z)=4+2z^2$.\\
	Third case when depth $G(M_{d-4})=2$. By Sally-descent depth$G(M)=d-2$ and $h_M(z)=4+3z^2-z^3$.

	{\bf Case(4): $e(M)=7$}\\
	In this case  $M_d\cong Q'/(y)\oplus Q'/(y^2)\oplus Q'/(y^2)\oplus Q'/(y^2)$ and $h_{M_d}(z)=4+3z$.\\
	We first consider the case when dim$M=4$ because if dim$M\leq3$ there is nothing to prove.\\
	Let $\underline{x}=x_1,x_2,x_3,x_4$ be a maximal $\phi$-superficial sequence. Set $M_1=M/x_1M$, $M_2=M/(x_1,x_2)M$, $M_3=M/(x_1,x_2,x_3)M$, $J_1=(x_2,x_3,x_4)$, $J_2=(x_3,x_4)$ and $J=(x_1,x_2,x_3,x_4)$.\\ 
	We first prove two claims:
	
	{\bf Claim(1):} $\widetilde{\mathfrak{m}^iM_3}=\mathfrak{m}^iM_3$ for all $i\geq2.$\\
	{\bf Proof of Claim:} Since we have $\mathfrak{m}^{n+1}M_3=x_4\mathfrak{m}^nM_3$ for all $n\geq2$. So $(\mathfrak{m}^{n+1}M_3:x_4)=\mathfrak{m}^nM_3$ for all $n\geq2$. We have  exact sequence (see \ref{RR-2}) $$0\rt \mathfrak{m}^{n+1}M_3:x_4/\mathfrak{m}^nM_3\rt \widetilde{\mathfrak{m}^nM_3}/\mathfrak{m}^nM_3\rt \widetilde{\mathfrak{m}^{n+1}M_3}/\mathfrak{m}^{n+1}M_3.$$ We also know that for $n\ggg 0$, $\widetilde{\mathfrak{m}^nM_3}=\mathfrak{m}^nM_3$. By using these facts it is clear that $\widetilde{\mathfrak{m}^iM_3}=\mathfrak{m}^iM_3$ for all $i\geq2.$
	
	{\bf Claim(2):} $\ell(\widetilde{\mathfrak{m}M_3}/\mathfrak{m}M_3)\leq1$.\\
	{\bf Proof of the claim:} Since $\mu(M_3)=4$,  we have $\ell(\widetilde{\mathfrak{m}M_3}/\mathfrak{m}M_3)\leq4$.\\ If $\ell(\widetilde{\mathfrak{m}M_3}/\mathfrak{m}M_3)=4$ then $\widetilde{\mathfrak{m}M_3}=M_3$. So $\mathfrak{m}^3M_3=\mathfrak{m}^2M_3$ because we know that $\widetilde{\mathfrak{m}^iM_3}=\mathfrak{m}^iM_3$ for all $i\geq2$. So, from here we have $\mathfrak{m}^2M_3=0$ which is a contradiction. Therefore $\ell(\widetilde{\mathfrak{m}M_3}/\mathfrak{m}M_3)\leq3$.\\
	If possible assume that $\ell(\widetilde{\mathfrak{m}M_3}/\mathfrak{m}M_3)>1$, so we have $M_3=\langle m_1,m_2,l_1,l_2 \rangle$ where $l_1,l_2\in \widetilde{\mathfrak{m}M_3}\setminus\mathfrak{m}M_3$. This implies $l_i\mathfrak{m}\sub \widetilde{\m^2M_3}=\mathfrak{m}^2M_3$ for $i=1,2$. Now if we set $\mathfrak{m}'=\mathfrak{m}/(x_1,x_2,x_3,x_4)$ then $\mathfrak{m}'$ is a principal ideal. We also know that $\ell(\mathfrak{m}M_4)=\ell(\mathfrak{m}'M_4)$ and $\mathfrak{m}^2M_4=\mathfrak{m'}^2M_4=0$. From here  we get  $\ell({\mathfrak{m}M_4})\leq2$. This is a contradiction because we know that $\ell({\mathfrak{m}M_4})=3$. So, $\ell(\widetilde{\mathfrak{m}M_3}/\mathfrak{m}M_3)\leq1$.\\
	Now we have two cases.\\
	{\bf Subcase (i):} When $\widetilde{\mathfrak{m}M_3}=\mathfrak{m}M_3$.\\ So, we have $\widetilde{\mathfrak{m}^iM_3}=\mathfrak{m}^iM_3$ for all $i$, because we know that $\widetilde{\mathfrak{m}^iM_3}=\mathfrak{m}^iM_3$ for all $i\geq2$ (from claim(1)). So in this case depth$G(M_3)=1$, i.e. $G(M_3)$ is \CM. By Sally-descent $G(M)$ is \CM\ and $h_M(z)=4+3z$.\\
	{\bf Subcase (ii):}  When $\ell(\widetilde{\mathfrak{m}M_3}/\mathfrak{m}M_3)=1$.\\
	Since dim$M_3=1$ we can write $h$-polynomial of $M_3$ as $h_{M_3}(z)=4+(\rho_0-\rho_1)z+\rho_1z^2$ where $\rho_n=\ell(\mathfrak{m}^{n+1}M_3/{x_4\mathfrak{m}^nM_3})$. We have  $\rho_0=\ell(\mathfrak{m}M_2/x_3M_2)=3$ and coefficients of $h_{M_3}$ are non-negative [from \ref{d=1}(1)]. 
	
	From short exact sequence (see \ref{exact d})
	$$0\rt \mathfrak{m}^2M_3:x_4/\mathfrak{m}M_3\rt \mathfrak{m}^2M_3/x_4\mathfrak{m}M_3\rt \mathfrak{m}^2M_4/0\rt 0$$
	we have $\rho_1=b_1(x_4,M_3)$ because $\mathfrak{m}^2M_4=0$.  From Claim(2) we have $b_1(x_4,M_3)\leq1$.
	
	We now have two cases.\\
	{\bf Subcase (ii).(a):} When $\rho_1=b_1(x_4,M_3)=0$.\\ So, $M_3$ has minimal multiplicity. This implies $G(M_3)$ is \CM. So this not a possible subcase because in subcase (ii),  $\widetilde{\mathfrak{m}M_3}\ne \mathfrak{m}M_3$.\\
	{\bf Subcase (ii).(b):} When $\rho_1=b_1(x_4,M_3)=\ell(\mathfrak{m}^2M_3:x_4/\mathfrak{m}M_3)=1$.\\ In this case depth$G(M_3)=0$, because $h_{M_3}(z)\ne h_{M_4}(z)$ (see \ref{Property}). We have  $h_{M_3}(z)=4+2z+z^2.$\\
	We also have $\widetilde{\mathfrak{m}^2M_3}=x_4\widetilde{\mathfrak{m}M_3}$. In fact, if $a\in\widetilde{\mathfrak{m}^2M_3}  $ then we can write $a=xp$ because $\widetilde{\mathfrak{m}^2M_3}=\mathfrak{m}^2M_3\sub (x_4)M_3$. This implies that $p\in \widetilde{\mathfrak{m}M_3}$ because $(\widetilde{\mathfrak{m}^2M_3}:x_4)=\widetilde{\mathfrak{m}M_3}$. So we have $\widetilde{\mathfrak{m}^{i+1}M_3}=x_4\widetilde{\mathfrak{m}^iM_3}$ for all $i\geq 1$ because $\mathfrak{m}^{i+1}M_3=x_4\mathfrak{m}^iM_3$ for $i\geq2$. This implies that $\widetilde{G(M_3)}$ has minimal multiplicity and $\widetilde{h_{M_3}}(z)=3+4z$.\\
	Here we have two cases.\\
	{\bf Subcase (ii).(b).(1):} When depth$G(M_2)\ne 0$.\\ Then we have depth$G(M_2)=1$ because depth$G(M_3)=0$. By Sally-descent depth$G(M)=3$ and $h_M(z)=4+2z+z^2$.\\
	{\bf Subcase (ii).(b).(2): } When depth$G(M_2)=0$.\\ So we have $b_1(x_3,M_2)=\ell(\mathfrak{m}^2M_2:x_3/\mathfrak{m}M_2)\neq 0$ (see \ref{Property}).\\
	From exact sequences (see \ref{RR-2})
	$$0\rt \frac{\mathfrak{m}^{n+1}M_2:x_3}{\mathfrak{m}^nM_2}\rt \frac{\widetilde{\mathfrak{m}^nM_2}}{\mathfrak{m}^nM_2}\rt \frac{\widetilde{\mathfrak{m}^{n+1}M_2}}{\mathfrak{m}^{n+1}M_2}\rt \frac{\widetilde{\mathfrak{m}^{n+1}M_3}}{\mathfrak{m}^{n+1}M_3} $$
	and $$0 \rt \widetilde{\mathfrak{m}M_2}/\mathfrak{m}M_2\rt \widetilde{\mathfrak{m}M_3}/\mathfrak{m}M_3 $$
	we have  $$1\leq b_1(x_3,M_2)\leq \ell(\widetilde{\mathfrak{m}M_2}/\mathfrak{m}M_2)\leq \ell(\widetilde{\mathfrak{m}M_3}/\mathfrak{m}M_3)=1. $$
	This implies that $\ell(\widetilde{\mathfrak{m}M_2}/\mathfrak{m}M_2)=1$ and $b_1(x_3,M_2)=1$. From here we also get 	 $\widetilde{\mathfrak{m}^nM_2}=\mathfrak{m}^nM_2$ for all $n\geq2$,
	because from claim(1) we know that $\widetilde{\mathfrak{m}^nM_3}=\mathfrak{m}^nM_3$ for all $n\geq2$.
	\\
	From the exact sequence (see \ref{exact d})
	$$ 0\rt \mathfrak{m}^{2}M_2:x_3/\mathfrak{m}M_2
	\rt \mathfrak{m}^{2}M_2/J_2\mathfrak{m}M_2
	\rt \mathfrak{m}^{2}{M_3}/x_4\mathfrak{m}{M_3}\rt 0$$
	we get $\ell(\mathfrak{m}^{2}M_2/J_2\mathfrak{m}M_2)=\rho_1+b_1(x_3,M_2)=2$.\\
	In this case we also have	
	$h_{M_2}(z)=h_{M_3}(z)-(1-z)^2z=4+z+3z^2-z^3$ (see \ref{mod-sup}).\\
	Since we have  $\ell(\widetilde{\mathfrak{m}M_2}/\mathfrak{m}M_2)=\ell(\widetilde{\mathfrak{m}M_3}/\mathfrak{m}M_3) $ and $\widetilde{\mathfrak{m}^iM_3}=\mathfrak{m}^iM_3$ for all $i\geq2$, from \cite[2.1]{Pu2}
	we get $$\overline{\widetilde{\mathfrak{m}^iM_2}}=\widetilde{\mathfrak{m}^iM_3}\ \ \text{ for all }  i\geq 1.$$	
	
	So, $\widetilde{G(M_2)}/x_3^*\widetilde{G(M_2)}=\widetilde{G(M_3)}$, this implies that $\widetilde{G(M_2)}$ is \CM \ and $\widetilde{h_{M_2}}(z)=3+4z$.\\
	From exact sequence  (see \ref{exact d})
	\begin{align}\label{M_1}
	0\rt \mathfrak{m}^{2}M_1:x_2/\mathfrak{m}M_1
	\rt \mathfrak{m}^{2}M_1/J_1\mathfrak{m}M_1
	\rt \mathfrak{m}^{2}{M_2}/J_2\mathfrak{m}{M_2}\rt 0
	\end{align}

	we have , if $\ell(\mathfrak{m}^2M_1:x_2/\mathfrak{m}M_1)=0$ then
	
	$\ell(\mathfrak{m}^2M_1/J_1\mathfrak{m}M_1)=2$, because $\ell(\mathfrak{m}^2M_2/J_2\mathfrak{m}M_2)=2$.\\
	{\bf Subcase (ii).(b).(2).($\alpha$):} $\ell(\mathfrak{m}^2M_1:x_2/\mathfrak{m}M_1)=0$.\\  Consider $$\delta=\sum\ell(\mathfrak{m}^{n+1}M_1\cap J_1M_1/J_1\mathfrak{m}^nM_1).$$ 
	We know that if $\delta\leq2$ then depth$G(M_1)\geq d-\delta$ (see \cite[Theorem 5.1]{apprx}).
	Since  $\delta=2$,  depth$G(M_1)\geq1$. Notice that here depth$G(M_1)=1$, because depth$G(M_2)=0$. By Sally-descent depth$G(M)=2$ and $h_M(z)= 4+z+3z^2-z^3$.\\
	{\bf Subcase (ii).(b).(2).($\beta$):} $\ell(\mathfrak{m}^2M_1:x_2/\mathfrak{m}M_1)\neq0$.\\ This implies depth$G(M_1)=0$ (see \ref{Property}). 
	
	Now from exact sequences (see \ref{RR-2})
	$$0\rt \frac{\mathfrak{m}^{n+1}M_1:x_2}{\mathfrak{m}^nM_1}\rt \frac{\widetilde{\mathfrak{m}^nM_1}}{\mathfrak{m}^nM_1}\rt \frac{\widetilde{\mathfrak{m}^{n+1}M_1}}{\mathfrak{m}^{n+1}M_1}\rt \frac{\widetilde{\mathfrak{m}^{n+1}M_2}}{\mathfrak{m}^{n+1}M_2} $$
	and $$0 \rt \widetilde{\mathfrak{m}M_1}/\mathfrak{m}M_1\rt \widetilde{\mathfrak{m}M_2}/\mathfrak{m}M_2 $$
	
	we get $$1\leq \ell(\mathfrak{m}^2M_1:x_2/\mathfrak{m}M_1)\leq \ell(\widetilde{\mathfrak{m}M_1}/\mathfrak{m}M_1)\leq \ell(\widetilde{\mathfrak{m}M_2}/\mathfrak{m}M_2)=1. $$
	This implies $\ell(\mathfrak{m}^2M_1:x_2/\mathfrak{m}M_1)=\ell(\widetilde{\mathfrak{m}M_1}/\mathfrak{m}M_1)=1$. From here we also get  $\widetilde{\mathfrak{m}^iM_1}=\mathfrak{m}^iM_1$ for all $i\geq2$, because $\widetilde{\mathfrak{m}^iM_2}=\mathfrak{m}^iM_2$ for all $i\geq2$. \\  From the short exact sequence (\ref{M_1})
	we have $\ell(\mathfrak{m}^2M_1/J_1\mathfrak{m}M_1)=3$.\\ Now since $\ell(\widetilde{\mathfrak{m}M_1}/\mathfrak{m}M_1)=\ell(\widetilde{\mathfrak{m}M_2}/\mathfrak{m}M_2)$ and $\widetilde{\mathfrak{m}^iM_1}=\mathfrak{m}^iM_1$ for all $i\geq2$, from \cite[2.1]{Pu2} we get $$\overline{\widetilde{\mathfrak{m}^iM_1}}=\widetilde{\mathfrak{m}^iM_2}\ \ \text{for all } \ i\geq 1.$$
	
	So $\widetilde{G(M_1)}/x_2^*\widetilde{G(M_1)}=\widetilde{G(M_2)}$, this implies that $\widetilde{G(M_1)}$ is \CM \ and $\widetilde{h_{M_1}}(z)=3+4z$.\\
	We can write the $h$-polynomial of $M_1$ as  $h_{M_1}(z)=\widetilde{h_{M_1}}(z)+(1-z)^4$.\\
	Consider  $$G(M)/(x_1^*x_2^*,x_3^*,x_4^*)G(M)=M/\mathfrak{m}M\oplus \mathfrak{m}M/JM\oplus \mathfrak{m}^2M/J\mathfrak{m}M.$$ After looking at its Hilbert series we get $\ell(\mathfrak{m}^2M/J\mathfrak{m}M)\leq3$, because $\ell(\mathfrak{m}M/JM)=3$ (see \ref{overline{G(M)}}).
	
	We have short exact sequence (see \ref{exact d})
	$$0\rt \mathfrak{m}^2M:x_1/\mathfrak{m}M\rt \mathfrak{m}^2M/J\mathfrak{m}M\rt \mathfrak{m}^2M_1/{J_1}\mathfrak{m}M_1\rt 0.$$
	This implies that $\ell(\mathfrak{m}^2M/J\mathfrak{m}M)\geq3$.\\ So we have $$(\mathfrak{m}^2M:x_1)=\mathfrak{m}M \ \text{and}\ \ell(\mathfrak{m}^2M/J\mathfrak{m}M)=3. $$
	Now we first prove a claim.\\
	{\bf Claim:} $\widetilde{\mathfrak{m}M}=\mathfrak{m}M$.\\
	{\bf Proof of the claim:} 	If $\widetilde{\mathfrak{m}M}\neq \mathfrak{m}M$.
	From exact sequence (see \ref{RR-2})
	$$0 \rt \widetilde{\mathfrak{m}M}/\mathfrak{m}M\rt \widetilde{\mathfrak{m}M_1}/\mathfrak{m}M_1 $$
	we get $\ell(\widetilde{\mathfrak{m}M}/\mathfrak{m}M)=1$, because $\ell(\widetilde{\mathfrak{m}M}/\mathfrak{m}M)\leq \ell(\widetilde{\mathfrak{m}M_1}/\mathfrak{m}M_1)=1$. Since  in this case    
	$\ell(\widetilde{\mathfrak{m}M}/\mathfrak{m}M)= \ell(\widetilde{\mathfrak{m}M_1}/\mathfrak{m}M_1)$ and $\widetilde{\mathfrak{m}^iM_1}={\mathfrak{m}^iM_1}$ for all $i\geq 2$, from \cite[2.1]{Pu2} we get
	$$\overline{\widetilde{\mathfrak{m}^iM}}=\widetilde{\mathfrak{m}^iM_1}\ \ \text{for all} \ i\geq 1.$$ This implies $\widetilde{G(M)}/x_1^*\widetilde{G(M)}=\widetilde{G(M_1)}$. So we get  $\widetilde{G(M)}$ is \CM \ and therefore $G(M)$ is generalised \CM.\\
	Let Ass$_{G(A)}G(M)=\{\mathcal{M},\mathcal{P}_1,\ldots,\mathcal{P}_s\}$, where $\mathcal{M}$ is maximal homogeneous ideal of $G(A)$ and $\mathcal{P}_i$'s are minimal primes in $G(A)$ (see \ref{ASSG}). Set $V=\mathfrak{m}/\mathfrak{m}^2$. We know that $\mathcal{P}_i\cap V\neq V$. Now if dim$\mathcal{P}_i\cap V=$ dim$V-1$, then dim$G(A)/\mathcal{P}_i\leq1$ and this is a contradiction as $\mathcal{P}_i$'s are minimal primes in $G(A).$\\
	Thus, dim$\mathcal{P}_i\cap V\leq$ dim$V-2$. So there exists $u^*,v^*\in V$ such that $H=ku^*+kv^*$ and $H\cap \mathcal{P}_i=0$ for $i=1,\ldots,s$ (see \ref{vector}). Thus if $\xi\in \mathfrak{m}$ such that $\xi^*\in H$ is non-zero then $\xi $ is a superficial element of $M$ (see \cite[Theorem 1.2.3]{Rossi}). Now since $\ell(\widetilde{\mathfrak{m}M}/\mathfrak{m}M)=1$, $$\widetilde{\mathfrak{m}M}=\mathfrak{m}M+Aa\ \text{for some }\ a\not\in \mathfrak{m}M$$
	If $x_1a\in \mathfrak{m}^2M$ then $a\in(\mathfrak{m}^2M:x_1)=\mathfrak{m}M$ and this is a contradiction.\\
	So $\overline{x_1a}\neq0$ in $\widetilde{\mathfrak{m}^2M}/\mathfrak{m}^2M$.\\ Now from the exact sequence (see \ref{RR-2})
	\begin{align}
	0\rt \mathfrak{m}^{2}M:x_1/\mathfrak{m}M\rt \widetilde{\mathfrak{m}M}/\mathfrak{m}M\rt \widetilde{\mathfrak{m}^{2}M}/\mathfrak{m}^{2}M\rt \widetilde{\mathfrak{m}^{2}M_1}/\mathfrak{m}^{2}M_1. 
	\end{align}
	we get $\ell(\widetilde{\mathfrak{m}^2M}/\mathfrak{m}^2M)=\ell(\widetilde{\mathfrak{m}M}/\mathfrak{m}M)=1$. So we have $\overline{ua}=\beta\overline{va}$ where $\beta$ is unit and $u,v$ are $M$-superficial elements. Now we have $(u-\theta v)a\in \mathfrak{m}^2M$ where $\theta\in A$ and $\overline{\theta}=\beta$ is a unit. Since $(u-\theta v)^*=u^*-\beta v^*$ is nonzero element in $H$, so $u-\theta v$ is $M$-superficial. This implies that $$a\in (\mathfrak{m}^2M:(u-\theta v))=\mathfrak{m}M$$
	This is a contradiction. So $\widetilde{\mathfrak{m}M}=\mathfrak{m}M$.\\
	Since $\widetilde{\mathfrak{m}M}=\mathfrak{m}M$, now  from exact sequence (see \ref{RR-2})
	\begin{align}
	0\rt \frac{\mathfrak{m}^{n+1}M:x_1}{\mathfrak{m}^nM}\rt \frac{\widetilde{\mathfrak{m}^nM}}{\mathfrak{m}^nM}\rt \frac{\widetilde{\mathfrak{m}^{n+1}M}}{\mathfrak{m}^{n+1}M}\rt \frac{\widetilde{\mathfrak{m}^{n+1}M_1}}{\mathfrak{m}^{n+1}M_1}, 
	\end{align}
	we get $\widetilde{\mathfrak{m}^iM}=\mathfrak{m}^iM$ for all $i$, because $\widetilde{\mathfrak{m}^iM_1}=\mathfrak{m}^iM_1$ for all $i\geq 2$. This implies that depth$G(M)\geq1.$ Notice that here depth$G(M)=1$ because depth$G(M_1)=0$. In this case $h_M(z) = 3+4z+(1-z)^4$.\\
	Now assume  dim$M\geq5$ and $\underline{x}=x_1,\ldots,x_d$  a maximal $\phi$-superficial sequence. set $M_{d-4}=M/(x_1,\ldots,x_{d-2})M$.  Then we have the following cases.\\
	First case when $G(M_{d-4})$ is \CM. By Sally-descent $G(M)$ is \CM\ and $h_M(z)=4+3z$.\\
	Second case when depth$G(M_{d-4})=3$. By Sally-descent depth$G(M)=d-1$ and
	$h_M(z)=4+2z+z^2$.\\
	Third case when depth$G(M_{d-4})=2$. By Sally-descent depth$G(M)=d-2$ and $h_M(z)=4+z+3z^2-z^3$.\\
	Fourth case when depth$G(M_{d-4})=1$. By Sally-descent depth$G(M)=d-3$ and $h_M(z)=3+4z+(1-z)^4$.

	{\bf Case(5): $e(M)=8$}\\
	In this case  $M_d\cong Q'/(y^2)\oplus Q'/(y^2)\oplus Q'/(y^2)\oplus Q'/(y^2)$ and $h_{M_d}(z)=4+4z$. So $e(M_d)=\mu(M_d)i(M_d)$. For dim$M\geq 1$,  this equality is preserved modulo any $\phi$-superficial sequence. This implies that $G(M)$ is \CM \ (see \cite[Theorem 2]{PuMCM}).
\end{proof}

From the above theorem we can conclude:
\begin{enumerate}
	\item If $e(M)=4 $ then $a_1=a_2=a_3=a_4=1$. In this case $M$ is an Ulrich module so $G(M)$ is \CM\ and $h_M(z)=4.$
	\item If $e(M)=5$ then $a_1=a_2=a_3=1,a_4=2$. In this case we have two cases:
	\begin{enumerate}
		\item  $G(M)$ is \CM\ if and only if  $h_M(z)=4+z$.
		\item  depth$G(M)=d-1$ if and only if $h_M(z)=4+z^2$. 
	\end{enumerate}
	\item If $e(M)=6$ then $a_1=a_2=1,a_3=a_4=2$. In this case we have three cases:
	\begin{enumerate}
		\item  $G(M)$ is \CM\ if and only if $h_M(z)=4+2z$.
		\item  depth$G(M)=d-1$ if and only if $h_M(z)=4+z+z^2$ or $h_M(z)=4+2z^2$.
		\item  depth$G(M)=d-2$ if and only if $h_M(z)=4+3z^2-z^3$.
	\end{enumerate}
	\item If $e(M)=7$ then $a_1=1,a_2=a_3=a_4=2$. In this case we have four cases:
	\begin{enumerate}
		\item $G(M)$ is \CM\ if and only if $h_M(z)=4+3z$.
		\item depth$G(M)=d-1$ if and only if $h_M(z)=4+2z+z^2$.
		\item depth$G(M)=d-2$ if and only if $h_M(z)=4+z+3z^2-z^4$.
		\item depth$G(M)=d-3$ if and only if $h_M(z)=3+4z+(1-z)^4$.
	\end{enumerate}
	\item If $e(M)=8$ then $a_1=a_2=a_3=a_4=2$. In this case $G(M)$ is \CM\ and $h_M(z)=4+4z.$
\end{enumerate}

\begin{corollary}
	Let $(A,\mathfrak{m})$ be a  hypersurface ring of dimension $d$ with $e(A)=3$ and $M$ an MCM module. If  $\mu(M)=4$, then depth$G(M)\geq d-3$.
	
\end{corollary}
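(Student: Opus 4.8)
The plan is to reduce the general statement to the $\mu(M)=4$ theorem proved above, which assumes $M$ has no free summand, by splitting off the free part of $M$. First I would invoke \ref{Base change} to assume $A$ is complete with infinite residue field, so that $(A,\mathfrak{m})=(Q/(g),\mathfrak{n}/(g))$ with $Q$ regular of dimension $d+1$ and $g\in\mathfrak{n}^3\setminus\mathfrak{n}^4$. Choosing $s$ maximal with $A^s$ a direct summand of $M$ (possible since $s\le\mu(M)=4$), I would write $M\cong A^s\oplus N$ with $N$ having no nonzero free summand and $\mu(N)=4-s$. Because $\operatorname{depth}(A^s\oplus N)=\min(\operatorname{depth}A,\operatorname{depth}N)$ and $M$ is MCM, the module $N$ is again MCM when nonzero; and since $e(A)=3$ gives $\mathfrak{m}^3=J\mathfrak{m}^2$ for a minimal reduction $J$, one gets $\mathfrak{m}^3N=J\mathfrak{m}^2N$, i.e. $\operatorname{red}(N)\le2$. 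These checks are routine but must be recorded, since the theorems I intend to cite all carry the hypothesis $\operatorname{red}(\cdot)\le2$.

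Next I would use that $G(-)$ commutes with finite direct sums, so $G(M)\cong G(A)^s\oplus G(N)$ and hence $\operatorname{depth}G(M)=\min\bigl(\operatorname{depth}G(A),\operatorname{depth}G(N)\bigr)$ (dropping a summand when $s=0$ or $N=0$). Since $\operatorname{red}(A)=2$, the ring $G(A)$ is \CM\ by \cite[Theorem 2.1]{S}, so $\operatorname{depth}G(A)=d$; thus it remains only to prove $\operatorname{depth}G(N)\ge d-3$.

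For the last step I would argue by cases on $\mu(N)\in\{0,1,2,3,4\}$. The case $\mu(N)=0$ is vacuous. For $\mu(N)=1$ the minimal presentation $0\to Q\xrightarrow{(h)}Q\to N\to0$ identifies $N\cong Q/(h)$ with $h$ a non-unit dividing $g$, so $e(N)=v_Q(h)=i(N)\mu(N)$ and \cite[Theorem 2]{PuMCM} makes $G(N)$ \CM, giving $\operatorname{depth}G(N)=d$. For $\mu(N)=2$ and $\mu(N)=3$ the bound $\operatorname{red}(N)\le2$ lets me apply Theorem \ref{muM=2} and Theorem \ref{muM=3} to obtain $\operatorname{depth}G(N)\ge d-1$ and $\operatorname{depth}G(N)\ge d-2$. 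For $\mu(N)=4$ the module $N$ has no free summand, so the preceding theorem applies directly and yields $\operatorname{depth}G(N)\ge d-3$. Combining, $\operatorname{depth}G(N)\ge d-3$ in all cases, and with $\operatorname{depth}G(A)=d$ this gives $\operatorname{depth}G(M)\ge\min(d,d-3)=d-3$.

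I do not expect a genuine obstacle here: the corollary is essentially a formal reduction to the no-free-summand theorem. The only points requiring care are the bookkeeping that the hypotheses (MCM and $\operatorname{red}\le2$) descend to the non-free summand $N$, and the small case $\mu(N)=1$, where one must recognize that a cyclic MCM module over the hypersurface is itself a quotient $Q/(h)$ satisfying $e(N)=i(N)\mu(N)$, so that \cite[Theorem 2]{PuMCM} applies and forces $G(N)$ to be \CM.
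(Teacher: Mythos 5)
Your proposal is correct and follows essentially the same route as the paper: split off the maximal free summand $M\cong A^s\oplus N$, check that $N$ inherits MCM and $\operatorname{red}\le 2$, and then invoke the no-free-summand theorem for $\mu(N)=4$ and Theorems \ref{muM=2}, \ref{muM=3} (plus the cyclic case via \cite[Theorem 2]{PuMCM}) for smaller $\mu(N)$, combining depths over the direct sum. The only cosmetic differences are that the paper treats the no-free-summand case separately before decomposing, and handles $\mu(N)=1$ by observing $a\in\mathfrak{n}^2\setminus\mathfrak{n}^3$ rather than by the equality $e(N)=i(N)\mu(N)$; both yield the same conclusion.
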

\begin{proof}
	By \ref{Base change} we can assume that $A$ is a complete local ring with infinite residue field.
	Since $e(A)=3$,  we can take $(A,\mathfrak{m})=(Q/(g),\mathfrak{n}/(g))$ where $(Q,\mathfrak{n})$ is a regular local ring of dimension $d+1$ and $g\in \mathfrak{n}^3\setminus\mathfrak{n}^4$.\\
	Now we have two cases here.\\
	First case when $M$ has no free summand. In this case, from the above theorem depth$G(M)\geq d-3$.\\
	Next case when $M$ has  free summand. In this case we can write $M\cong N\oplus A^s$ for some $s\geq 1$ and $N$ has no free summand. We assume $N\ne 0$, otherwise  $M$ is free and  $G(M)$ is \CM.\\
	Clearly, $N$ is a MCM $A$-module (see \cite[Proposition 1.2.9]{BH}). Notice that red$(N)\leq 2$, because red$(M)\leq 2$. Also $\mu(M)> \mu(N)$, so $\mu(N)\leq 3$.\\ 
	If $\mu(N)=1$ then we have a minimal presentation of $N$ as $0\rt Q\xrightarrow{a} Q\rt N\rt  0$, where $a\in \mathfrak{n}$. 
	This implies $N\cong Q/(a)Q$. Notice that  since red$(N)\leq 2$, $N$ has no free summand and $i(N)$ is preserved modulo any $\phi$-superficial sequence, this implies $a\in \mathfrak{n}^2\setminus\mathfrak{n}^3$. Since $N\cong Q/(a)Q$, $G(N)$ is \CM.\\
	If $\mu(N)=2$ then depth$G(N)\geq d-1$ (from Theorem \ref{muM=2}).\\	
	If $\mu(N)=3$ then  depth$G(N)\geq d-2$ (from Theorem \ref{muM=3}).\\ 
	We know that (see \cite[Proposition 1.2.9]{BH})
	
	depth$G(M)\geq $ min\{depth$G(N)$, depth$G(A)$\}$=$ depth$G(N)$.\\
	So in this case depth$G(M)\geq d-2$. 
	
\end{proof}

\section{\bf The case when $\mu(M)=r$ and $det(\phi)\in \mathfrak{n}^{r+1}\setminus \mathfrak{n}^{r+2}$}

We konw (from \cite[Theorem 2]{PuMCM}) that if $\mu(M)=r$ and $det(\phi) \in \mathfrak{n}^r\setminus\mathfrak{n}^{r+1}$, then $M$ is an Ulrich module. This implies $G(M)$ is \CM.\\ Here we consider the case when $det(\phi)\in \mathfrak{n}^{r+1}\setminus \mathfrak{n}^{r+2}$. 
\begin{theorem}
	Let $({Q},\mathfrak{n})$ be a regular local ring (with infinite residue field) of dimension $d+1$ with $d\geq 0$. Let $M$ be a $Q$-module with minimal presentation $$0\rt Q^r\xrightarrow{\phi} Q^r \rt M \rt 0$$
	
	Now if $\phi = [a_{ij}]
	$
	where $a_{ij} \in \mathfrak{n}$ with  $f=det(\phi) \in \mathfrak{n}^{r+1}\setminus \mathfrak{n}^{r+2}$ and $red(M)\leq2$, then depth$G(M)\geq d-1$. In this case we can also prove that
	\begin{enumerate}
		\item $G(M)$ is \CM \  if and only if $h_M(z)=r+z$.
		\item depth$G(M)=d-1$ if and only if $h_M(z)=r+z^2$
	\end{enumerate}
\end{theorem}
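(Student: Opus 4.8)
The plan is to reproduce the dimension-reduction scheme used for Theorems \ref{muM=2} and \ref{muM=3}: pass modulo a maximal $\phi$-superficial sequence to reach a one- or two-dimensional module, determine its Hilbert series exactly, and then transport the depth estimate upward by Sally-descent (\ref{Sally-des}). First I would invoke \ref{Base change} to assume $Q$ is complete with infinite residue field and set $A=Q/(f)$ with $f=\det\phi$; then $M$ is an MCM $A$-module with $\mu(M)=r$ and $e(M)=v_Q(\det\phi)=r+1$, so $i(M)=1$ (by \cite[Theorem 2]{PuMCM}, since $e(M)<2\mu(M)$ when $r\geq 2$; the case $r=1$ is immediate because then $M\cong A$ and $G(M)=G(A)$ is \CM). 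Choosing a maximal $\phi$-superficial sequence $\underline{x}=x_1,\ldots,x_d$ (\ref{phi}), the ring $Q'=Q/(\underline{x})$ is a DVR with uniformizer $y$, and $M_d=M/\underline{x}M\cong\bigoplus_{j=1}^{r}Q'/(y^{a_j})$ with each $a_j\geq 1$ (as $\mu(M_d)=\mu(M)=r$). The $\phi$-superficial conditions give $\sum_{j=1}^{r}a_j=v_{Q'}(\det\overline{\phi})=v_Q(\det\phi)=r+1$, so exactly one $a_j$ equals $2$ and the rest equal $1$. Hence $M_d\cong (Q'/(y))^{r-1}\oplus Q'/(y^2)$ and $h_{M_d}(z)=r+z$ in every case.

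Next I would study the one-dimensional reduction $M_{d-1}$, writing $h_{M_{d-1}}(z)=r+(\rho_0-\rho_1)z+\rho_1 z^2$ with $\rho_n=\ell(\mathfrak{m}^{n+1}M_{d-1}/x_d\mathfrak{m}^nM_{d-1})$ (here $\red(M)\leq 2$ kills $\rho_n$ for $n\geq 2$). Since $e(M_{d-1})=e(M_d)=r+1$ and $\mu(M_{d-1})=r$, we get $\rho_0=1$, and non-negativity of the coefficients (\ref{d=1}) leaves only $\rho_1\in\{0,1\}$, i.e. $h_{M_{d-1}}(z)=r+z$ or $r+z^2$. If $h_{M_{d-1}}(z)=r+z$, then $M_{d-1}$ has minimal multiplicity, so $G(M_{d-1})$ is \CM, and Sally-descent with $c=d-1$ shows $G(M)$ is \CM; since a regular sequence of initial forms preserves the $h$-polynomial, $h_M(z)=h_{M_d}(z)=r+z$.

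The remaining, and \textbf{main}, case is $h_{M_{d-1}}(z)=r+z^2$, where $\depth G(M_{d-1})=0$ by \ref{Property} (as $h_{M_{d-1}}\neq h_{M_d}$); by Sally-descent this already forces $G(M)$ to be non-\CM, so $\depth G(M)\leq d-1$. The heart of the argument is to prove $\depth G(M_{d-2})\geq 1$ for the two-dimensional reduction $M_{d-2}$ (then $\depth G(M_{d-2})=1$ exactly, since its reduction $M_{d-1}$ has depth $0$, and Sally-descent with $c=d-2$ yields $\depth G(M)\geq d-1$, hence $=d-1$). Mimicking Theorem \ref{muM=2}, Case (2), I would argue by contradiction: if $\depth G(M_{d-2})=0$ then the generic superficial $x_{d-1}^*$ is a zero-divisor, so $\sum_i b_i(x_{d-1},M_{d-2})\neq 0$; the relation $e_2(M_{d-2})=e_2(M_{d-1})-\sum_i b_i(x_{d-1},M_{d-2})$ from \ref{Property}, with $e_2(M_{d-1})=1$, forces $b_1(x_{d-1},M_{d-2})=1$ and $\sum_i b_i=1$ (the exact sequence \ref{exact seq} propagates $b_1=0$ to all $b_i=0$). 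Feeding this into \ref{exact seq} gives $\ell(\mathfrak{m}^2M_{d-2}/J\mathfrak{m}M_{d-2})=\rho_1+b_1=2$, whereas the degree-one piece of $\overline{G(M_{d-2})}$ has length equal to the $z$-coefficient of $h_{M_d}(z)=r+z$, namely $1$. Thus $\overline{G(M_{d-2})}$ would have Hilbert series $r+z+2z^2$, contradicting the inequality $\beta\leq\alpha$ forced by \ref{overline{G(M)}}. The \textbf{main obstacle} is exactly this length bookkeeping: one must check $\mathfrak{m}^3M_{d-2}\subseteq J\mathfrak{m}M_{d-2}$ (from $\red(M)\leq 2$) so that the degree-two length is precisely $2$, and that the degree-one length is precisely $1$; it is the smallness of these numbers, guaranteed by $e(M)=r+1$, that makes the Hilbert-series obstruction \ref{overline{G(M)}} bite.

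Finally I would assemble the two scenarios, which are exhaustive and mutually exclusive. Thus $\depth G(M)\geq d-1$ always; $G(M)$ is \CM\ exactly in the first scenario, equivalently $h_M(z)=r+z$; and $\depth G(M)=d-1$ exactly in the second, equivalently $h_M(z)=r+z^2$ (the latter equality following because $\depth G(M)=d-1\geq 1$ forces $h_M=h_{M_1}=\cdots=h_{M_{d-1}}=r+z^2$). The low-dimensional cases $d=0,1$ drop out directly from the one-dimensional analysis, without needing the two-dimensional step.
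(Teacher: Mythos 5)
Your proposal is correct and follows essentially the same route as the paper's own proof: reduce modulo a maximal $\phi$-superficial sequence to get $M_d\cong (Q'/(y))^{r-1}\oplus Q'/(y^2)$ with $h_{M_d}(z)=r+z$, run the one-dimensional analysis forcing $\rho_1\in\{0,1\}$, handle $\rho_1=1$ by contradiction using non-negativity of $e_2$, the exact sequence \ref{exact seq}, and the impossible Hilbert series $r+z+2z^2$ for $\overline{G(M)}$ via \ref{overline{G(M)}}, and finish with Sally-descent \ref{Sally-des}. The only difference is organizational: you run the two-dimensional step on $M_{d-2}$ for arbitrary $d$, whereas the paper first settles $\dim M=2$ and then reduces the general case to it, which is the same argument.
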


\begin{proof} Set $(A,\mathfrak{m})=(Q/(f),\mathfrak{n}/(f))$. Since $f.M=0$, so $M$ is an $A$-module. Also, it is clear that $M$ is an MCM $A$-module because projdim$_QM=1$.
	
	We first consider the case when dim$M=2$ because if dim$M\leq1$ there is nothing to prove.\\
	Let   $\underline{x}=x_1,x_2$ be a maximal $\phi$-superficial sequence (see \ref{phi}). Set $M_1=M/x_1M$, $M_2=M/\underline{x}M$ and $(Q',(y))=(Q/(\underline{x}),\mathfrak{n}/(\underline{x}))$.\\
	Clearly, $Q'$ is a DVR.
	
	We know that $det(\phi\otimes Q') = det\phi\in \mathfrak{n}^{r+1}\setminus\mathfrak{n}^{r+2}$ and $\phi$ is an $r\times r$-matrix. So $M_2\cong  Q'/(y)\oplus\ldots\oplus Q'/(y)\oplus Q'/(y^2) $.
	This implies $h_{M_2}(z)=r+z$, where $r=\mu(M).$
	
	Since dim$M_1=1$, we have
	$h_{M_1}(z)=r+(\rho_0-\rho_1)z+\rho_1z^2$
	where $\rho_n=\ell(\mathfrak{m}^{n+1}M_1/{x_2\mathfrak{m}^nM_1})$. As red$M\leq2$ we get $\rho_n=0$ for all $n\geq2$.\\ We also have $\rho_0=\ell(\mathfrak{m}M_1/x_2M_1)=1$ and from \ref{d=1}(1) we know that coefficients of $h_{M_1}$ are non-negative.  So $\rho_1=0$ or $1$.

	{\bf Subcase(i):  $\rho_1=0$.}\\
	In this case, $M_1$ has minimal multiplicity and this implies $G(M_1)$ is \CM\ and $h_{M_1}(z)=r+z$. By Sally-descent $G(M)$ is \CM\ and $h_M(z)=r+z$.

	{\bf Subcase(ii): $\rho_1=1$.}\\ 
	In this case $h_{M_1}(z)=r+z^2$ and depth$G(M_1)=0$ because $h_{M_1}(z)\ne h_{M_2}(z)$ (see \ref{Property}). \\
	From \ref{mod-sup} we can write $$h_M(z)=h_{{M_1}}(z)-(1-z)^2b_{x_1,M}(z).$$
	This gives us $$e_2(M)=e_2({M_1})-\sum b_i(x_1,M),$$
	where $b_i(x_1,M)=\ell(\mathfrak{m}^{i+1}M:x_1/\mathfrak{m}^iM)$.
	We know that $e_2(M)$ and $\sum b_i(x_1,M) $ are non-negative integers. We also have $e_2({M_1})=1$ . So we have  $\sum b_i(x_1,M)\leq 1 $. 
	
	Since  red$M\leq2$, from the exact sequence (\ref{exact seq})
	\begin{align*}
	0 \rt \mathfrak{m}^nM:(\underline{x})/\mathfrak{m}^{n-1}M\rt \mathfrak{m}^nM:x_1/\mathfrak{m}^{n-1}M & \rt \mathfrak{m}^{n+1}M:x_1/\mathfrak{m}^{n}M\\
	\rt \mathfrak{m}^{n+1}M/(\underline{x})\mathfrak{m}^nM
	&\rt \mathfrak{m}^{n+1}{M_1}/x_2\mathfrak{m}^n{M_1}\rt 0
	\end{align*}
	we get if $b_1(x_1,M)=0$ then $b_i(x_1,M)=0$ for all $i\geq2$.\\
	{\bf Claim:} depth$G(M)=1$.\\
	{\bf Proof of the claim:} If possible assume that depth$G(M)=0$. This implies  $\sum b_i(x_1,M) \ne 0$ (see \ref{Property} ).\\
	So we get  $b_1(x_1,M) =1$. 
	
	From the above exact sequence,
	we get 
	$$0\rt \mathfrak{m}^2M:x_1/\mathfrak{m}M\rt\mathfrak{m}^2M/(\underline{x})\mathfrak{m}M\rt\mathfrak{m}^2{M_1}/x_2\mathfrak{m}{M_1}\rt0.$$
	So, $\ell(\mathfrak{m}^2M/(\underline{x})\mathfrak{m}M)=\rho_1+b_1(x_1,M)=2$.

	Consider $\overline{G(M)}=G(M)/(x_1^*,x_2^*)G(M)$. So we have
	$$G(M)/(x_1^*,x_2^*)G(M)=M/\mathfrak{m}M \oplus \mathfrak{m}M/((\underline{x})M+\mathfrak{m}^2M)\oplus \mathfrak{m}^2M/((\underline{x})\mathfrak{m}M+\mathfrak{m}^3M).$$
	Since deg$h_{M_2}=1$, $\mathfrak{m}^2M\subseteq (\underline{x})M$ and  $\mathfrak{m}^3M=(\underline{x})\mathfrak{m}^2M\sub (\underline{x})\mathfrak{m}M$. \\
	Thus we have
	$$\overline{G(M)}=M/\mathfrak{m}M \oplus \mathfrak{m}M/JM \oplus \mathfrak{m}^2M/J\mathfrak{m}M$$
	Its Hilbert series is $r+z+2z^2$. From \ref{overline{G(M)}} this is not a possible Hilbert series. This implies that depth$G(M)\geq 1$. Notice that $G(M)$ cannot be a \CM\ module, because depth$G(M_1)=0$. So depth$G(M)=1$.
	
	Now assume dim$M\geq3$ and $\underline{x}=x_1,\ldots,x_d$ a maximal $\phi$-superficial sequence. Set $M_{d-2}=M/(x_1,\ldots,x_{d-2})M$. We now have two cases.\\
	First case when $G(M_{d-2})$ is \CM. By Sally-descent $G(M)$ is \CM\ and $h_M(z)=r+z$.\\
	Second case when depth $G(M_{d-2})=1$. By Sally-descent depth$G(M)=d-1$ and $h_M(z)=r+z^2$.

\end{proof}

\section{\bf The case when $e(M)=\mu(M)i(M)+1$}

Let $(Q,\mathfrak{n})$  be a regular local ring and
$0\rt Q^{\mu(M)}\xrightarrow{\phi} Q^{\mu(M)}\rt M\rt 0$ be a minimal presentation of $M$ over $Q$. Set $i(M)=$ max\{$i | $ all entries of $\phi $ are in $\mathfrak{n}^i$\}.\\
From \cite[theorem 2]{PuMCM} we know that for an MCM module over a hypersurface ring $e(M)\geq \mu(M)i(M)$ and if $e(M)= \mu(M)i(M)$ then $G(M)$ is \CM. We now consider the next case and prove that:
\begin{theorem}\label{em=mum}
	Let $(Q,\mathfrak{n})$ be a regular local ring of dimension $d+1$, $g\in \mathfrak{n}^i\setminus\mathfrak{n}^{i+1}$, $i\geq 2$. Let $(A,\mathfrak{m})=(Q/(g),\mathfrak{n}/(g))$  and $M$ be a MCM $A$-module. Now  if $e(M)=\mu(M)i(M)+1$ then depth$G(M)\geq d-1$ and $h_M(z)=\mu(M)(1+z+\ldots+z^{i(M)-1})+z^s$ where $s\geq i(M)$. Furthermore, $G(M)$ is \CM\ if and only if $s=i(M)$.  
\end{theorem}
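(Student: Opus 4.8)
The plan is to strip $M$ down to dimension one by a maximal $\phi$-superficial sequence, read off the $h$-polynomial there, and then lift the conclusions back up by Sally-descent, treating the passage through dimension two with the Ratliff-Rush filtration. By \ref{Base change} I may assume $A=Q/(g)$ is complete with infinite residue field. Choose a maximal $\phi$-superficial sequence $\underline{x}=x_1,\dots,x_d$ and put $M_t=M/(x_1,\dots,x_t)M$. By Definition \ref{phi} the invariants $\mu(M_t)$, $i(M_t)$ and $e(M_t)$ are independent of $t$, so the hypothesis $e(M)=\mu(M)i(M)+1$ holds for every $M_t$. The Artinian reduction $M_d$ is a finite-length torsion module over the DVR $Q'=Q/(\underline{x})$, so $M_d\cong\bigoplus_{j=1}^{\mu(M)} Q'/(y^{a_j})$; reading off the three preserved invariants shows the number of summands is $\mu(M)$, that $\min_j a_j=i(M_d)=i(M)$, and that $\sum_j a_j=e(M)=\mu(M)i(M)+1$. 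Hence $\sum_j(a_j-i(M))=1$, so one $a_j$ equals $i(M)+1$ and the rest equal $i(M)$; that is, $M_d\cong (Q'/(y^{i(M)}))^{\mu(M)-1}\oplus Q'/(y^{i(M)+1})$ and $h_{M_d}(z)=\mu(M)(1+z+\dots+z^{i(M)-1})+z^{i(M)}$ (so $\red(M)=i(M)$, which need \emph{not} be $\le 2$). For the one-dimensional $M_{d-1}$, \ref{d=1}(1) forces $h_{M_{d-1}}(z)=\mu(M)(1+\dots+z^{i(M)-1})+\sum_{j\ge i(M)}h_j z^j$ with all $h_j\ge0$; evaluating at $z=1$ and using $e(M_{d-1})=\mu(M)i(M)+1$ gives $\sum_j h_j=1$, so $h_{M_{d-1}}(z)=\mu(M)(1+\dots+z^{i(M)-1})+z^s$ for a unique $s\ge i(M)$.

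Next I determine the depth in dimension one through Singh's equality \ref{mod-sup}: $h_{M_{d-1}}(z)=h_{M_d}(z)-(1-z)\,b_{x_d,M_{d-1}}(z)$ yields $b_{x_d,M_{d-1}}(z)=z^{i(M)}+z^{i(M)+1}+\dots+z^{s-1}$. When $s=i(M)$ this polynomial vanishes, so by \ref{Property}(6) the element $x_d^*$ is $G(M_{d-1})$-regular, $G(M_{d-1})$ is \CM, and Sally-descent \ref{Sally-des} makes $G(M)$ \CM. When $s>i(M)$ it is nonzero, so $\depth G(M_{d-1})=0$ and $G(M)$ is not \CM\ (again by \ref{Sally-des}). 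This already proves the criterion that $G(M)$ is \CM\ if and only if $s=i(M)$, and it gives $\depth G(M)=d$ in the first case.

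The substantial point is the lower bound $\depth G(M)\ge d-1$ when $s>i(M)$. By \ref{Sally-des} this is exactly $\depth G(P)\ge1$ for the two-dimensional module $P=M_{d-2}$, whose hyperplane section $P_1=M_{d-1}$ has $\depth G(P_1)=0$. Here the devices of the earlier theorems are unavailable, since $\red(P)=i(M)$ can exceed $2$, so neither the Hilbert-series restriction \ref{overline{G(M)}} nor the short exact sequences tailored to reduction number two apply. Instead I would argue through the Ratliff-Rush filtration, following the pattern of the $\mu(M)=4$ case: (i) $\wt{G(P_1)}$ is automatically \CM\ in dimension one, and I would bound the degree-one defect $\ell(\wt{\m M_{d-1}}/\m M_{d-1})$ by feeding the computed value of $b_{x_d,M_{d-1}}$ into the exact sequences of \ref{RR-2}; (ii) the injection $\wt{\m P}/\m P\hookrightarrow\wt{\m P_1}/\m P_1$ of \ref{RR-2} transports this bound to $P$; (iii) if the defect upstairs were nonzero I would select, via Proposition \ref{vector}, two superficial elements whose leading forms avoid the relevant minimal primes and derive a contradiction exactly as in the $\mu(M)=4$ analysis, forcing $\wt{\m P}=\m P$; (iv) reinserting this into \ref{RR-2} gives $\wt{\m^n P}=\m^n P$ for all $n$, i.e.\ $\depth G(P)\ge1$. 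Making steps (i)--(iii) work uniformly, without the reduction-number-two crutch and with the Ratliff-Rush defect possibly spread over several degrees, is the step I expect to be the main obstacle.

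Finally, once $\depth G(M)\ge d-1$ is in hand the shape of $h_M$ follows formally. For $d\ge 2$ one has $\depth G(M_t)\ge d-1-t\ge1$ for $t\le d-2$ by \ref{Sally-des}, so \ref{Property}(8) gives $h_{M_t}=h_{M_{t+1}}$ and hence $h_M=h_{M_{d-1}}=\mu(M)(1+z+\dots+z^{i(M)-1})+z^s$ with $s\ge i(M)$; the cases $d\le1$ are immediate from the dimension-zero and dimension-one computations above, which together with the criterion of the second paragraph complete the proof.
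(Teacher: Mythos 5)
Your first two paragraphs are sound and agree with the paper's own reduction: the decomposition of the Artinian reduction over the DVR, the shape of $h_{M_{d-1}}$ forced by \ref{d=1}(1), the computation $b_{x_d,M_{d-1}}(z)=z^{i(M)}+\cdots+z^{s-1}$ from Singh's equality, and the resulting criterion that $G(M)$ is Cohen-Macaulay exactly when $s=i(M)$. But the substance of the theorem is the inequality $\depth G(P)\geq 1$ for the two-dimensional section $P=M_{d-2}$ when $s>i(M)$, and there your text is a plan, not a proof: you yourself flag steps (i)--(iii) of your Ratliff-Rush scheme as ``the main obstacle.'' Nothing in the proposal actually bounds the defects $\ell(\widetilde{\mathfrak{m}^nP_1}/\mathfrak{m}^nP_1)$, and the $\mu(M)=4$ pattern you want to imitate leans on $e(A)=3$ and reduction number two in essential ways (for instance $\mathfrak{m}^2M_4=0$, whence $\widetilde{\mathfrak{m}^iM_3}=\mathfrak{m}^iM_3$ for $i\geq 2$ and $\ell(\widetilde{\mathfrak{m}M_3}/\mathfrak{m}M_3)\leq 1$); none of this has an analogue when $i(M)$, and hence the reduction number, is large and the defect may be spread over the degrees $i(M),\dots,s-1$. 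So the central step is genuinely missing.

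The paper closes this step by a different mechanism that avoids the Ratliff-Rush filtration entirely and needs no case distinction on $s$. Since $M=\operatorname{coker}(\phi)$ and every entry of $\phi$ lies in $\mathfrak{n}^{i(M)}$, the modules $M$, $M_1$, $M_2$ have the Hilbert function of a free module in all degrees $n\leq i(M)-1$; hence $h_n(M)=h_n(M_1)=h_n(M_2)=\mu(M)$ in that range, and Singh's equality \ref{mod-sup} converts these coincidences into the colon conditions $(\mathfrak{m}^{n+1}M:x_1)=\mathfrak{m}^nM$ and $(\mathfrak{m}^{n+1}M_1:x_2)=\mathfrak{m}^nM_1$ for $n\leq i(M)-1$. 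An elementary computation then yields the Valabrega--Valla equalities $\mathfrak{m}^{n+1}M\cap JM=J\mathfrak{m}^nM$ for $n\leq i(M)-1$, and the exact sequence \ref{exact seq} together with the known $h_{M_1}$ gives $\ell(\mathfrak{m}^{i(M)}M/J\mathfrak{m}^{i(M)-1}M)\leq 1$. The criterion of \cite[Theorem 4.2.1]{Rossi} then yields $\depth G(M)\geq 1$ in dimension two directly, and Sally-descent finishes exactly as in your last paragraph. To complete your own route you would have to prove a uniform substitute for that criterion; as written, the proposal establishes the easy parts of the theorem but not its core.
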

\begin{proof}By \ref{Base change} we can assume that $A$ has infinite residue field.\\
	We first consider the case when dim$M=2$ because if dim$M\leq1$ there is nothing to prove.\\
	Let   $\underline{x}=x_1,x_2$ be a maximal $\phi$-superficial sequence (see \ref{phi}). Set $M_1=M/x_1M$, $M_2=M/\underline{x}M$, $J=(x_1,x_2)$ and $(Q',(y))=(Q/(\underline{x}),\mathfrak{n}/(\underline{x}))$.\\
	Clearly, $Q'$ is a DVR and so $M_2\cong  Q'/(y^{i(M)})\oplus\ldots\oplus Q'/(y^{i(M)})\oplus Q'/(y^{i(M)+1}) $, because $e(M)=\mu(M)i(M)+1$. This implies $h_{M_2}(z)=\mu(M)(1+z+\ldots+z^{i(M)-1})+z^{i(M)}$. Notice that since $\underline{x}$ is $\phi$-superficial sequence, $e(M)=e(M_1)=e(M_2),\mu(M)=\mu(M_1)=\mu(M_2)$ and $i(M)=i(M_1)=i(M_2)$.
	
	Since dim$M_1=1$.  We can write $h$-polynomial of ${M_1}$ as  $h_{M_1}(z)=h_0({M_1})+h_1({M_1})z+\ldots+h_s({M_1})z^s$
	with all the coefficients non-negative (see \ref{d=1}(1)).\\ Now if we set 
	$Q_1=Q/(x_1)$ then $\mathfrak{m}^nM_1/{\mathfrak{m}^{n+1}M_1}\cong \mathfrak{m}^n(Q_1)^{\mu(M)}/{\mathfrak{m}^{n+1}(Q_1)^{\mu(M)}} $ for $n\leq i(M)-1$, because $M_1=coker(\phi\otimes Q_1)$. This implies 
	$$\ell(\mathfrak{m}^nM_1/{\mathfrak{m}^{n+1}M_1})=n\mu(M)\ \text{for all}\ n\leq i(M)-1.$$
	So we have $$h_0(M_1)+h_1(M_1)\ldots+h_n(M_1)=n\mu(M) \text{ for all}\ n\leq i(M)-1.$$
	
	Since $h_0(M_1)=\mu(M)$, we get   $h_0(M_1)=h_1(M_1)=\ldots=h_{i(M)-1}(M_1)=\mu(M)$.\\ Therefore
	$h_{M_1}(z)=\mu(M)(1+z+\ldots+z^{i(M)-1})+z^{s}$ for $s\geq i(M)$, because $e_0(M)=i(M)\mu(M)+1$ and  all its coefficients are non-negative (see \ref{d=1}(1)). \\
	Since $M=coker(\phi)$, for $n\leq i(M)-1$ we get $$\mathfrak{m}^nM/{\mathfrak{m}^{n+1}M}\cong \mathfrak{m}^n(Q)^{\mu(M)}/{\mathfrak{m}^{n+1}(Q)^{\mu(M)}}. $$ 
	
	Now if $h$-polynomial of $M$ is  $h_M(z)=h_0(M)+h_1(M)z+\ldots+h_t(M)z^t$ then 
	$$\ell(\mathfrak{m}^nM/\mathfrak{m}^{n+1}M)=\binom{n+2}{n}\mu(M)\ \text{for all}\  n\leq i(M)-1.$$
	So for all $ n\leq i(M)-1$ we have 
	\[
	(n+1)h_0(M)+nh_1(M)+\ldots +h_n(M)=\binom{n+2}{n}\mu(M). \tag{$\dagger$}
	\]
	Now since $h_0(M)=\mu(M)$, we get from ($\dagger$) $$h_0(M)=h_1(M)=\ldots=h_{i(M)-1}(M)=\mu(M)$$
	So we have, $h_n(M)=h_n({M_1})$ for all $n\leq i(M)-1$.\\
	From Singh's equality (\ref{mod-sup}) we have $$\mathfrak{m}^{n+1}M:x_1=\mathfrak{m}^nM\ \text{for}\ n=0,\ldots,i(M)-1.$$ So we have
	\begin{equation}\label{d211}
	\mathfrak{m}^{n+1}M\cap x_1M=x_1\mathfrak{m}^nM\ \text{for}\ n=0,\ldots,i(M)-1.
	\end{equation}
	Since $h_n(M_1)= h_n(M_2)$ for $n=0,\ldots,i(M)-1$,
	from Singh's equality (\ref{mod-sup})
	\begin{equation}\label{d111}
	\mathfrak{m}^{n+1}{M_1}:x_2=\mathfrak{m}^n{M_1} \ \text{for}\ n=0,\ldots,i(M)-1
	\end{equation}
	Now we have  $\mathfrak{m}^{n+1}M\cap JM=J\mathfrak{m}^nM\ \text{for}\ n=0,\ldots,i(M)-1$.
	In fact, if $\alpha=ax_1+bx_2\in \mathfrak{m}^{n+1}M$. Going modulo $x_1$ we get $\overline{\alpha}=\overline{b}x_2\in \mathfrak{m}^{n+1}{M_1}$. From (\ref{d111}) we have $\overline{b}\in \mathfrak{m}^n{M_1}$. So we can write $b=c+f$, where $c\in \mathfrak{m}^nM$ and $f\in x_1M$. This implies  $\alpha=ax_1+cx_2+fx_2$. Hence $\alpha-cx_2\in \mathfrak{m}^{n+1}M\cap x_1M$. So from (\ref{d211}) we have $\alpha=cx_2+gx_1$ with $c,g\in \mathfrak{m}^nM$. This implies $\alpha\in J\mathfrak{m}^nM$.
	
	So we have
	\begin{equation}\label{VV11}
	vv_j(M)=\ell\Big(\frac{\mathfrak{m}^{n+1}M\cap JM}{J\mathfrak{m}^nM}\Big)=0 \ \text{for}\ i=0,\ldots,i(M)-1
	\end{equation}
	Since $\mathfrak{m}^{i(M)}M:x_1=\mathfrak{m}^{i(M)-1}M$,  we have from \ref{exact seq}
	\begin{equation}\label{v11}
	v_{i(M)-1}=\ell(\mathfrak{m}^{i(M)}M/J\mathfrak{m}^{i(M)-1}M)=\ell(\mathfrak{m}^{i(M)}{M_1}/x_2\mathfrak{m}^{i(M)-1}{M_1})\leq 1
	\end{equation}
	Notice that last inequality in \ref{v11} is clear from the $h$-polynomial of $M_1$.\\
	Now from conditions (\ref{VV11}) and (\ref{v11}), depth$G(M)\geq 1$ (see \cite[Theorem 4.2.1]{Rossi}). \\
	Now assume dim$M\geq3$ and $\underline{x}=x_1,\ldots,x_d$ a maximal $\phi$-superficial sequence. Set $M_{d-2}=M/(x_1,\ldots,x_{d-2})M$. So, depth$G(M_{d-2})\geq 1$.\\
	By Sally-descent  we get depth$G(M)\geq d-1$ and $h_M(z)=\mu(M)(1+z+\ldots+z^{i(M)-1})+z^s$ where $s\geq i(M)$.
\end{proof}

\begin{corollary}
	Let $Q=k[[x_1,\ldots,x_{d+1}]]$.  Let $M$ be a $Q$-module with minimal presentation $0\rt Q^{\mu(M)}\xrightarrow{\phi}Q^{\mu(M)}\rt M\rt 0$. Set $\phi=\sum_{i\geq i(M)}\phi_i$, where $\phi_i$'s are forms of degree $i$. Now if rank($\phi_{i(M)})=\mu(M)-1$ and det$(\phi_{i(M)}+\phi_{i(M)+1})\ne0$ then depth$G(M)\geq d-1$ and
	$h_M(z)=\mu(M)(1+z+\ldots+z^{i(M)-1})+z^s$ where $s\geq i(M)$. Furthermore, $G(M)$ is \CM\ if and only if $s=i(M)$.
	
\end{corollary}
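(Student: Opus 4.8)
The plan is to deduce the Corollary directly from Theorem \ref{em=mum}. That theorem already packages every conclusion we want---the bound depth$\,G(M)\geq d-1$, the shape $h_M(z)=\mu(M)(1+z+\dots+z^{i(M)-1})+z^s$ with $s\geq i(M)$, and the criterion that $G(M)$ is \CM\ if and only if $s=i(M)$---out of the single numerical hypothesis $e(M)=\mu(M)i(M)+1$. So the entire task reduces to verifying that the two hypotheses rank$(\phi_{i(M)})=\mu(M)-1$ and $\det(\phi_{i(M)}+\phi_{i(M)+1})\neq 0$ force $e(M)=\mu(M)i(M)+1$. Write $r=\mu(M)$ and $c=i(M)$ throughout.

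First I would recall, exactly as used in the proofs of Theorems \ref{muM=2} and \ref{muM=3}, that the multiplicity is read off from the determinant. Reducing modulo a maximal $\phi$-superficial sequence $\underline{x}=x_1,\dots,x_d$ gives a DVR $Q'=Q/(\underline{x})$ with uniformiser $y$ and $M_d\cong\bigoplus_{j=1}^{r}Q'/(y^{a_j})$, where the $a_j$ are the orders of the elementary divisors of $\bar\phi=\phi\otimes Q'$. Hence $e(M)=\ell(M_d)=\sum_j a_j=v_{Q'}(\det\bar\phi)=v_Q(\det\phi)$, the last equality coming from condition (3) of Definition \ref{phi}. Thus the whole problem becomes the computation of $v_Q(\det\phi)$.

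For the lower bound I would argue with the homogeneous components of $\det\phi$ over the graded ring. Every entry of $\phi=\sum_{i\geq c}\phi_i$ has order $\geq c$, so the lowest possible homogeneous component of $\det\phi$ sits in degree $rc$ and equals $\det\phi_c$. Since $\operatorname{rank}(\phi_c)=r-1$ we have $\det\phi_c=0$, whence $v_Q(\det\phi)\geq rc+1$, i.e.\ $e(M)\geq rc+1$; this recovers $e(M)\geq\mu(M)i(M)$ of \ref{d=1}(2) and improves it by one. The next component, in degree $rc+1$, receives contributions only from $\phi_c$ and $\phi_{c+1}$ and equals the degree-$(rc+1)$ part of $\det(\phi_c+\phi_{c+1})$; concretely it is $\operatorname{tr}\!\big(\operatorname{adj}(\phi_c)\,\phi_{c+1}\big)$, the coefficient of $t$ in $\det(\phi_c+t\phi_{c+1})$. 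Because $\operatorname{rank}(\phi_c)=r-1$ exactly, $\operatorname{adj}(\phi_c)$ has rank one, say $\operatorname{adj}(\phi_c)=p\,q^{\mathsf T}$ with $p,q$ spanning the right and left kernels of $\phi_c$, so this term equals $q^{\mathsf T}\phi_{c+1}\,p$ up to a nonzero $(r-1)$-minor. Equivalently, after the DVR reduction the corank-one leading matrix has all but one elementary divisor of order $c$, and $e(M)=rc+1$ precisely when the remaining one has order exactly $c+1$.

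The step I expect to be the main obstacle is showing that this degree-$(rc+1)$ term $q^{\mathsf T}\phi_{c+1}\,p$ does not vanish. The delicate point is that $\det(\phi_c+\phi_{c+1})\neq 0$ only guarantees that \emph{some} homogeneous component in degrees $rc+1,\dots,r(c+1)$ is nonzero, whereas we need the \emph{lowest} one; a priori the corank of $\phi_c$ could be resolved only at higher order while $\det(\phi_c+\phi_{c+1})$ is nonzero through a higher component (so that $e(M)>rc+1$), and this possibility must be excluded by the hypothesis. I would handle this by passing to the corank-one Schur complement of $\phi_c$: localise so that a fixed invertible $(r-1)\times(r-1)$ block of $\phi_c$ survives, express $\det(\phi_c+\phi_{c+1})$ through that block and the scalar Schur complement, and read off that its order is $c+1$, i.e.\ that the first-order splitting $q^{\mathsf T}\phi_{c+1}\,p$ is nonzero. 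Granting this, $v_Q(\det\phi)=rc+1$, so $e(M)=\mu(M)i(M)+1$, and Theorem \ref{em=mum} applies verbatim to yield depth$\,G(M)\geq d-1$, the stated form of $h_M(z)$, and the Cohen--Macaulay criterion, completing the reduction.
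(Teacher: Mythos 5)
Your route is the same as the paper's: both proofs reduce the Corollary to the single equality $e(M)=\mu(M)i(M)+1$ and then quote Theorem \ref{em=mum} (the paper gets the equality by asserting that $\phi\otimes Q'$ is equivalent over the DVR $Q'$ to $\operatorname{diag}(y^{i(M)},\ldots,y^{i(M)},y^{i(M)+1})$; you get it by asserting $v_Q(\det\phi)=\mu(M)i(M)+1$, which is the same claim). The genuine gap is exactly the step you isolate as ``the main obstacle'' and then only \emph{grant}: that the degree-$(rc+1)$ component $\operatorname{tr}(\operatorname{adj}(\phi_c)\phi_{c+1})$ of $\det\phi$ is nonzero, where $r=\mu(M)$, $c=i(M)$. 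No Schur-complement or localisation argument can close this gap, because the implication is false: the hypotheses $\operatorname{rank}(\phi_c)=r-1$ and $\det(\phi_c+\phi_{c+1})\neq 0$ do not force the lowest possible component to survive. Concretely, take $d=1$, $Q=k[[x,y]]$ with $k$ infinite, $r=2$, $c=1$, and
\[
\phi=\phi_1+\phi_2=\begin{pmatrix} x & y^{2}\\ y^{2} & 0\end{pmatrix},\qquad
\phi_1=\begin{pmatrix} x & 0\\ 0 & 0\end{pmatrix},\qquad
\phi_2=\begin{pmatrix} 0 & y^{2}\\ y^{2} & 0\end{pmatrix}.
\]
Then $\operatorname{rank}(\phi_1)=1=\mu(M)-1$ and $\det(\phi_1+\phi_2)=-y^{4}\neq 0$, so the Corollary's hypotheses hold; but $\operatorname{tr}(\operatorname{adj}(\phi_1)\phi_2)=0$, so $v_Q(\det\phi)=4$. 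Reducing modulo the element $x-\lambda y$ (which is $\phi$-superficial for generic $\lambda$) gives the matrix $\left(\begin{smallmatrix} \lambda y & y^{2}\\ y^{2} & 0 \end{smallmatrix}\right)$ over $Q'=k[[y]]$, whose Smith form is $\operatorname{diag}(y,y^{3})$, not $\operatorname{diag}(y,y^{2})$. Hence $M/(x-\lambda y)M\cong Q'/(y)\oplus Q'/(y^{3})$ and $e(M)=4=\mu(M)i(M)+2$; since $h_M(1)=e(M)$, the $h$-polynomial cannot have the form $2+z^{s}$, so the \emph{conclusion} of the Corollary fails, not merely the proof.

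This comparison shows your diagnosis is sharper than the paper's own write-up, which makes the inference you worried about silently: from ``$\operatorname{rank}\phi'_{i(M)}=\mu(M)-1$ and $\det(\phi'_{i(M)}+\phi'_{i(M)+1})\neq 0$'' it concludes ``therefore $\phi\otimes Q'\sim\operatorname{diag}(y^{i(M)},\ldots,y^{i(M)},y^{i(M)+1})$''. Writing $\phi\otimes Q'=y^{c}(A_{c}+yA_{c+1}+\cdots)$ with matrices $A_i$ over $k$ and $\operatorname{rank}A_c=r-1$, that Smith form holds if and only if the linear coefficient $\operatorname{tr}(\operatorname{adj}(A_{c})A_{c+1})$ of $\det(A_{c}+yA_{c+1}+\cdots)$ is nonzero, whereas the hypothesis only guarantees that \emph{some} coefficient of $\det(A_{c}+yA_{c+1})$ survives (in the example above it is the quadratic one). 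Both arguments, yours and the paper's, become correct if the hypothesis is strengthened to the nonvanishing of the degree-$(\mu(M)i(M)+1)$ homogeneous component of $\det(\phi_{i(M)}+\phi_{i(M)+1})$, that is, $\operatorname{tr}(\operatorname{adj}(\phi_{i(M)})\,\phi_{i(M)+1})\neq 0$ (your $q^{T}\phi_{c+1}p\neq 0$): then $v_Q(\det\phi)=\mu(M)i(M)+1$, so $e(M)=\mu(M)i(M)+1$, and Theorem \ref{em=mum} finishes the proof exactly as you say. As written, however, your proposal does not prove the statement, and the statement in this form is not true.
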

\begin{proof}
	After row and column reduction we can assume that first $\mu(M)-1$ rows and columns of $\phi_{i(M)}$ form an invertible matrix.\\
	We first consider the case when dim$M=2$ because if dim$M\leq1$ there is nothing to prove.\\
	Let   $\underline{x}=x_1,x_2$ be a maximal $\phi$-superficial sequence (see \ref{phi}). Set $M_1=M/x_1M$, $M_2=M/\underline{x}M$ and $(Q',(y))=(Q/(\underline{x}),\mathfrak{n}/(\underline{x}))$.\\
	Clearly, $Q'$ is a DVR. Now set $\phi'= \phi \otimes Q'$. Then rank$\phi'_{i(M)}=\mu(M)-1$ and
	det$(\phi'_{i(M)}+\phi'_{i(M)+1})\ne0$. Therefore we can assume that ("$\sim$" denotes "similar" )
	\[
	\phi \otimes Q' \sim
	\begin{bmatrix}
	y^{i(M)} & & &\\
	& \ddots & &\\
	& & y^{i(M)} &\\
	& &  &y^{i(M)+1}
	\end{bmatrix}
	\]
	So, $M_2\cong  Q'/(y^{i(M)})\oplus\ldots\oplus Q'/(y^{i(M)})\oplus Q'/(y^{i(M)+1}) $.\\ This implies $h_{M_2}(z)=\mu(M)(1+z+\ldots+z^{i(M)-1})+z^{i(M)}$ and  $e(M)=i(M)\mu(M)+1$. \\ Now the result follows from the  Theorem \ref{em=mum}.
\end{proof}

\section{Examples} {\bf Case(1)} If $\mu(M)=2$ then we have

Take $Q=k[[x,y]]$, $\mathfrak{n}=(x,y)$
\begin{enumerate}
	
	\item $\phi = \begin{pmatrix}
	a & b \\
	c & d
	\end{pmatrix} $ with $ad-bc\ne0$ and $a,b,c,d\in \mathfrak{n}\setminus \mathfrak{n}^2$, then $G(M)$ is \CM \ and $h_M(z)=2$.
	\item $\phi = \begin{pmatrix}
	y^{a_1} & 0 \\
	0 & y^{a_2}
	\end{pmatrix} $ where $1\leq a_i\leq 3$ then $G(M)$ is \CM .
	\item $\phi = \begin{pmatrix}
	y^2 & 0 \\
	x^2 & y
	\end{pmatrix} $ then  $G(M)$ is \CM \ and $h_M(z)=2+z.$ Because if we set $e_1=(1,0)^T$ and $e_2=(0,1)^T$ then $M\cong (Q \oplus Q)/\langle y^2e_1+x^2e_2,ye_2 \rangle$. We can easily calculate $\ell(M/\mathfrak{n}M)=2$, $\ell(\mathfrak{n}M/\mathfrak{n}^2M)=3$ and $\ell(\mathfrak{n}^2M/\mathfrak{n}^3M)=3$. Now since $y^3M=0$ and dim$M=1$, we get deg$h_M(z)\leq 2$. From the above calculation it is clear $h_M(z)=2+z$ and this implies $M$ has minimal multiplicity. 
	\item $\phi = \begin{pmatrix}
	y^2 & 0 \\
	x & y
	\end{pmatrix} $ then  depth$G(M)=0$    and $h_M(z)=2+z^2.$ Because if we set $e_1=(1,0)$ and $e_2=(0,1)$ then $M\cong (Q\oplus Q)/\langle y^2e_1+xe_2,ye_2\rangle$. Now it is clear that $\overline{e_2}\in \widetilde{\mathfrak{n}M}\setminus \mathfrak{n}M$. Also, dim$M=1$ and $\rho_2=0$ so, deg$h_M(z)\leq2$. It is also clear that $\ell(M/\mathfrak{n}M)=2$, $\ell(\mathfrak{n}M/\mathfrak{n}^2M)=2$ because $x\overline{e_2}=y^2\overline{e_1}$. Similar calculation gives that $\ell(\mathfrak{n}^2M/\mathfrak{n}^3M)=3$ and $y^3M=0$. So, $h_M(z)=2+z^2$.
	
\end{enumerate}

{\bf Case(2)} Now if $\mu(M)=3$ then examples are:
Take $Q=k[[x,y]]$, $\mathfrak{n}=(x,y)$
\begin{enumerate}
	
	\item $\phi= \begin{pmatrix}
	y^2 & 0 & 0\\
	0 & y & 0\\
	0 & 0 & y
	\end{pmatrix} $ then  $G(M)$ is \CM \ and $h_M(z)=3+z.$
	\item $\phi= \begin{pmatrix}
	y^{a_1} & 0 & 0\\
	0 & y^{a_2} & 0\\
	0 & 0 & y^{a_3}
	\end{pmatrix} $ with $1\leq a_i\leq 3$ then  $G(M)$ is \CM.
	
	\item $\phi= \begin{pmatrix}
	y^2 & 0 & 0\\
	x & y & 0\\
	0 & 0 & y
	\end{pmatrix} $ then  depth$G(M)=0$,  because $\widetilde{\mathfrak{n}M}\neq \mathfrak{n}M,$  
	
	as $\overline{e_2}\in \widetilde{\mathfrak{n}M}\setminus\mathfrak{n}M.$   We can calculate $h-$polynomial (as in case(1.4) above ) and get $h_M(z)=3+z^2.$ Also notice that $y^3\overline{e_i}=0$ for $i=1,2,3$, this implies $M$ is $Q/(y^3)$-module.
	
	\item  $\phi= \begin{pmatrix}
	x & y & 0\\
	x^2 & x^2 & 0\\
	0 & 0 & x^2
	\end{pmatrix} $ then  depth$G(M)=0$,  because $\widetilde{\mathfrak{n}M}\neq \mathfrak{n}M$, 
	
	as $\overline{e_1}\in \widetilde{\mathfrak{n}M}\setminus\mathfrak{n}M.$ We have $x\overline{e_1}= -x^2\overline{e_2}$, $y\overline{e_1}=-x^2\overline{e_2}$ and $x^2\overline{e_3}=0$. From here we get $(x-y)\overline{e_1}=0$, $x^2(x-y)\overline{e_2}=xx^2\overline{e_2}-yx^2\overline{e_2}=-xy\overline{e_1}+xy\overline{e_1}=0$ and $x^2\overline{e_3}=0$. Therefore, $x^2(x-y)\overline{e_i}=0$ for $i=1,2,3$. So, $M$ is $Q/(x^2(x-y))$-module.

	Now for the next three examples take $Q=k[[x,y,z]]$:
	\item   $\phi= \begin{pmatrix}
	x & y & z\\
	x^2 & x^2 & 0\\
	0 & 0 & x^2
	\end{pmatrix} $ then  depth$G(M)=0$, because $\widetilde{\mathfrak{n}M}\neq \mathfrak{n}M$, 
	
	as $\overline{e_1}\in \widetilde{\mathfrak{n}M}\setminus\mathfrak{n}M.$ Here $M$ is $Q/(x^2(x-y))$-module, because $x^2(x-y)\overline{e_i}=0$ for $i=1,2,3.$
	\item  $\phi= \begin{pmatrix}
	x & y & 0\\
	x^2 & x^2 & 0\\
	0 & 0 & x^2
	\end{pmatrix} $ then depth$G(M)=1$, because $z^*$ is $G(M)-$regular and after going modulo $z^*$ we get depth${G({{N}})}=0$, here ${N}=M/zM$. Notice that $\overline{e_1}\in \widetilde{\mathfrak{n}N}\setminus\mathfrak{n}N$.  Here $M$ is $Q/(x^2(x-y))$-module, because $x^2(x-y)\overline{e_i}=0$ for $i=1,2,3.$
	\item  $\phi= \begin{pmatrix}
	x & 0 & 0\\
	0 & x^2 & 0\\
	0 & 0 & x^2
	\end{pmatrix} $ then depth$G(M)=2$, i.e. $G(M)$ is \CM.
\end{enumerate}
{\bf Case(3):} If $\mu(M)=4$ and $e(A)=3$.

\begin{enumerate}
	\item  Take $Q=k[[x,y,z,t]]$, $\mathfrak{n}=(x,y,z,t)$ and  $\phi= \begin{pmatrix}
	x & y & z & t\\
	x^2 & x^2 & 0 &0\\
	0 & 0 & x^2 & 0\\
	0 & 0 & 0  & x^2
	\end{pmatrix} $
	then
	
	depth$G(M)=0$ because $\overline{e_1}$ where $e_1=(1,0,0,0)^T$ is an element of $\widetilde{\mathfrak{n}M}$. We have $x\overline{e_1}=-x^2\overline{e_2}$, $y\overline{e_1}=-x^2\overline{e_2}$, $z\overline{e_1}=-x^2\overline{e_3}$ and $t\overline{e_1}=-x^2\overline{e_4}$. These relations imply that  $x^2(x-y)\overline{e_i}=0$ for $i=1,2,3,4$. So,  $M$ is $Q/(x^2(x-y))$-module.  
	
	\item  $\phi = \begin{pmatrix}
	x & y & z & 0\\
	x^2 & x^2 & 0 &0\\
	0 & 0 & x^2 & 0\\
	0 & 0 & 0  & x^2
	\end{pmatrix} $ 
	then depth$G(M)=1$.\\ Since $t^*$ is $G(M)-$regular and  after going modulo $t^*$, we get depth${G({N})}=0$, here $N=M/tM$.  Notice that $\overline{e_1}\in \widetilde{\mathfrak{n}N}\setminus\mathfrak{n}N$. Since $x^2(x-y)\overline{e_i}=0$ for $i=1,2,3,4$, this implies $M$ is $Q/(x^2(x-y))$-module.
	\item  $\phi= \begin{pmatrix}
	x   & y   & 0   & 0\\
	x^2 & x^2 & 0   &0\\
	0   & 0   & x^2 & 0\\
	0   & 0   & 0   & x^2
	\end{pmatrix} $ then it is clear that $z^*,t^*$ is maximal $G(M)$-regular sequence. So, depth$G(M)=2$. In fact, if we set $N=M/(z,t)M$ then $\overline{e_1}\in \widetilde{\mathfrak{n}M}\setminus\mathfrak{n}M$. Also notice that $M$ is $Q/(x^2(x-y))$-module, because $x^2(x-y)\overline{e_i}=0$ for $i=1,2,3,4.$
	\item  $\phi= \begin{pmatrix}
	x & 0 & 0 & 0\\
	0 & x^2 & 0 &0\\
	0 & 0 & x^2 & 0\\
	0 & 0 & 0  & x^2
	\end{pmatrix} $ then  $G(M)$ is \CM.
	\item Take $Q=k[[x,y,z]]$, $\mathfrak{n}=(x,y,z)$ and  $\phi= \begin{pmatrix}
	x & y & z & 0\\
	x^2 & x^2 & 0 &0\\
	0 & 0 & x^2 & 0\\
	0 & 0 & 0  & x
	\end{pmatrix} $\\
	then depth$G(M)=0$ because $\overline{e_1}$ where $e_1=(1,0,0,0)^T$ is an element of $\widetilde{\mathfrak{n}M}$. Since $x^2(x-y)\overline{e_i}=0$ for $i=1,2,3,4$, this implies $M$ is\\ $Q/(x^2(x-y))-$module.
\end{enumerate}
{\bf Case(4):} If $\mu(M)=r$; take  $Q=k[[x,y]]$, $\mathfrak{n}=(x,y)$ 	

\begin{enumerate}
	\item $[\phi]_{r\times r}= \begin{pmatrix}
	y^2 & 0 &0 &\cdots & 0\\
	x^2 & y & 0 &\cdots & 0\\
	0   & 0 & y &\cdots & 0\\
	\vdots & \vdots &\vdots& \ddots &0\\
	0      &  0     &  0   & \cdots & y
	\end{pmatrix} $ then $det\in\mathfrak{n}^{r+1}\setminus\mathfrak{n}^{r+2}$, $G(M)$ is \CM \ and $h_M(z)=r+z.$
	
	\item  $[\phi]_{r\times r}= \begin{pmatrix}
	y^2 & 0 &0 &\cdots & 0\\
	x & y & 0 &\cdots & 0\\
	0   & 0 & y &\cdots & 0\\
	\vdots & \vdots &\vdots& \ddots &0\\
	0      &  0     &  0   & \cdots & y
	\end{pmatrix} $then $det\in\mathfrak{n}^{r+1}\setminus\mathfrak{n}^{r+2}$, depth$G(M)=0$ because $\widetilde{\mathfrak{n}M}\neq \mathfrak{n}M$ as $\overline{e_2}\in \widetilde{\mathfrak{n}M}\setminus\mathfrak{n}M$.  and $h_M(z)=r+z^2.$

\end{enumerate}

{\bf Case(5):} rank($\phi_{i(M)})=\mu(M)-1$ and det$(\phi_{i(M)}+\phi_{i(M)+1})\ne 0$.
\begin{enumerate}
	\item Take $Q=k[[x,y,z]] $, $\mathfrak{n}=(x,y,z)$ and\\
	 $\phi= \begin{pmatrix}
	x & 0 & 0\\
	0 & x & 0\\
	0 & 0 & x^2
	\end{pmatrix} $ then depth$G(M)=2$, i.e. $G(M)$ is \CM.
	\item  $\phi= \begin{pmatrix}
	x & 0 & y\\
	0 & x & 0\\
	0 & 0 & x^2
	\end{pmatrix} $ then depth$G(M)=1$, because $z^*$ is $G(M)-$regular. If we set $N=M/zM$ then $\overline{e_1}\in \widetilde{\mathfrak{n}M}\setminus\mathfrak{n}M$.
\end{enumerate}


\begin{thebibliography}{10}
	\bibitem{BH}
	W.~Bruns and J.~Herzog, \emph{{Cohen-Macaulay rings}}, vol.~39, Cambridge
	studies in advanced mathematics, Cambridge University Press,~Cambridge, 1993.
	
    \bibitem{Eisenbud}
      D. Eisenbud, \emph{Homological algebra on complete intersections with an application to group representations}, Trans. Amer. Math. Soc., 260 (1980), 35-64.
     
     	\bibitem{heinzer}
     W.~Heinzer, B.~Johnston, D.~Lantz, K.~Shah,
     \emph{The Ratliff -Rush ideals in a Noetherian ring: a survey, in: Methods in Module Theory}(Colorado
     Springs, CO, 1991), in: Lecture Notes in Pure and Appl. Math., vol. 140, Dekker, New York, 1993, pp. 149–159.
     
     
     \bibitem{Naghipour}
     R.~Naghipour,
     \emph{Ratliff-Rush closures of ideals with respect to a Noetherian module},J. Pure Appl. Algebra 195 (2) (2005) 167–172.
     
	\bibitem{Pu0}
		T.~J. Puthenpurakal,
	\emph{Hilbert coefficients of a Cohen--Macaulay module}, J. Algebra \textbf{264}  (2003), no.~1, 82--97.
	
	
	\bibitem{PuMCM}
	\bysame
	\emph{The Hilbert function of a maximal Cohen-Macaulay module}, Math. Z. \textbf{251} (2005), no.~3, 551--573.
	
	
	
	\bibitem{Pu1}
		\bysame,
	\emph{Ratliff-{R}ush filtration, regularity and depth of higher associated graded modules. {I}},
	J. Pure Appl. Algebra \textbf{208} (2007), no.~1, 159--176.
	
	\bibitem{apprx}
	  \bysame,
	  \emph{Complete intersection approximation, dual filtrations and applications}  arXiv preprint arXiv:0807.0471,(2008).
	
	\bibitem{Pu2}
		\bysame,
	\emph{Ratliff-{R}ush filtration, regularity and depth of higher associated graded modules. {II}},
	J. Pure Appl. Algebra \textbf{221} (2017), no.~3,
	611--631.
	
		\bibitem{Ratliff}
	L.~J.~Ratliff, D.~Rush,
	\emph{Two notes on reductions of ideals}, Indiana Univ. Math. J. 27 (1978) 929–934.
	
		\bibitem{rv}
	M.~E.~Rossi and G.~Valla,
	\emph{A conjecture of J. Sally},
	Comm. Algebra 24 (1996), no. 13, 4249–-4261.
	
	\bibitem{Rossi}
	 Maria Evelina Rossi and  Giuseppe Valla,
	\emph{Hilbert functions of filtered modules}, vol.9, Springer Science \& Business Media,2010
	
	\bibitem{Sbook}
	J.~D.~Sally
	\emph{Number of generators of ideals in local rings}, Lect. Notes Pure Appl. Math., vol.~35, M. Dekker, 1978.
		\bibitem{S}
	\bysame,
	\emph{Tangent cones at Gorenstein singularities},
	Compositio Math. 40 (1980) 167–-175.
	
	\bibitem{singh}
	B.~Singh,
	\emph{Effect of a permissible blowing-up on the local Hilbert functions}, Invent. Math. 26(1974), 201-212.
\end{thebibliography}
\end{document}